\newtheorem{theorem}{Theorem}
\newtheorem{lemma}[theorem]{Lemma}
\newtheorem{proposition}[theorem]{Proposition}
\newtheorem{cor}[theorem]{Corollary}
\newtheorem{remark}{Remark}
\newtheorem*{theorem*}{Theorem}
\newtheorem*{proposition*}{Proposition}
\newtheorem*{lemma*}{Lemma}
\newcommand{\E}{\mathbb{E}}
\renewcommand{\P}{\mathbb{P}}
\newcommand{\R}{\mathbb{R}}
\newcommand{\Z}{\mathbb{Z}}
\newcommand{\C}{\mathbb{C}}
\newcommand{\kT}{\mathfrak{T}}
\newcommand{\kD}{\mathfrak{D}}
\newcommand{\kS}{\mathfrak{S}}
\newcommand{\cT}{\mathcal{T}}
\newcommand{\cA}{\mathcal{A}}
\newcommand{\cC}{\mathcal{C}}
\DeclareMathOperator*{\argmax}{arg\,max}
\DeclareMathOperator*{\rd}{rd}
\newcommand{\PL}{\ell}
\newcommand{\Bin}{\operatorname{Bin}}
\newcommand{\Exp}{\operatorname{Exp}}
\newcommand{\Geom}{\operatorname{Geom}}
\newcommand{\Poi}{\operatorname{Poisson}}
\author{Paul Thévenin, Stephan Wagner}
\title{Local limits of descent-biased permutations and trees}
\date{}
\begin{document}

\maketitle

\begin{abstract}
\small{We study two related probabilistic models of permutations and trees biased by their number of descents. Here, a descent in a permutation $\sigma$ is a pair of consecutive elements $\sigma(i), \sigma(i+1)$ such that $\sigma(i) > \sigma(i+1)$. Likewise, a descent in a rooted tree with labelled vertices is a pair of a parent vertex and a child such that the label of the parent is greater than the label of the child. For some nonnegative real number $q$, we consider the probability measures on permutations and on rooted labelled trees of a given size where each permutation or tree is chosen with a probability proportional to $q^{\text{number of descents}}$. In particular, we determine the asymptotic distribution of the first elements of permutations under this model. Different phases can be observed based on how $q$ depends on the number of elements $n$ in our permutations. The results on permutations then allow us to characterize the local limit of descent-biased rooted labelled trees.}
\end{abstract}

\section{Introduction}

For an integer $n \geq 1$, let $\kS_n$ be the set of permutations of $\llbracket 1, n \rrbracket := \{1,2,\ldots,n\}$. We consider random permutations $\sigma \in \kS_n$ biased by their number of descents, that is, the number of integers $i \in \llbracket 1, n-1 \rrbracket$ such that $\sigma(i)> \sigma(i+1)$. More precisely, for any $q \in [0,\infty)$, we consider the random variable $S_n^{(q)}$ on $\kS_n$ such that, for any permutation $\sigma \in \kS_n$,
\begin{align*}
\P\left( S_n^{(q)}=\sigma \right) = \frac{1}{Z_{n,q}} q^{d(\sigma)}, 
\end{align*}
where $d(\sigma)$ is the number of descents of $\sigma$ and $Z_{n,q}=\sum_{\sigma \in \kS_n} q^{d(\sigma)}$ (with the convention $0^0=1$). In particular, when $q=1$, we recover the uniform distribution on $\kS_n$. On the other hand, when $q=0$, $S_n^{(0)}$ is almost surely the identity permutation. Observe also that, by symmetry, for any $q>0$, the behaviour of $S_n^{(q)}$ is essentially the same as the behaviour of $S_n^{(1/q)}$, in the sense that $(S_n^{(q)}(k))_{1 \leq k \leq n}$ has the same distribution as $(n+1-S_n^{(1/q)}(k))_{1 \leq k \leq n}$. Hence, we can restrict ourselves to the case where $q \in [0,1]$.

This probabilistic model is in some sense a local variant of the more frequently studied model of Mallows permutations introduced in \cite{Mal57}. The $q$-Mallows permutation of size $n$ is the random permutation $V_n^{(q)}$ satisfying, for any permutation $\tau \in \kS_n$:
\begin{align*}
\P\left( V_n^{(q)}=\tau\right) = \frac{1}{\tilde{Z}_{n,q}} q^{i(\tau)}, 
\end{align*}
where $i(\tau)$ is the number of inversions of $\tau$, that is, the number of pairs $(j,k) \in \llbracket 1, n \rrbracket^2$ such that $j<k$ and $\tau(j)>\tau(k)$, and $\tilde{Z}_{n,q}=\sum_{\tau \in \kS_n} q^{i(\tau)}$. Scaling limit results for this model have been shown in~\cite{Sta09}: after rescaling, the empirical measure of a $q_n$-Mallows permutation of size $n$, in the regime where $n(1-q_n)$ converges to a positive constant, converges to a certain random measure on $[0,1]^2$.

Our first goal in this paper is to prove a local limit theorem on $S_n^{(q_n)}$, where $(q_n)_{n \geq 1}$ is a given sequence of real numbers in $[0,1]$, by studying the asymptotic behaviour of $(S_n^{(q_n)}(1), \ldots, S_n^{(q_n)}(r))$ for fixed $r \geq 1$. We distinguish three phases: a degenerate one, where $q_n$ decreases at least exponentially with $n$, a nondegenerate one where $q_n$ goes to $0$ more slowly, and a constant phase where $q_n$ remains constant as $n \to \infty$.

\paragraph*{The local limit of descent-biased permutations.}

We remark that the case where $q_n$ goes to $0$ faster than exponentially is trivial, as we then have for all fixed $r \geq 1$ that $\left(S_n^{(q_n)}(1), \ldots, S_n^{(q_n)}(r)\right) = (1,2,\ldots,r)$ with high probability (see Section \ref{sec:degenerate}, after the proof of Proposition \ref{prop:concentration}).

In the degenerate phase, the values of the first $r$ elements of $S_n^{(q_n)}$ are stochastically bounded.

\begin{theorem}[Degenerate phase]
\label{thm:maindegenerate}
Let $(q_n)_{n\geq 1}$ be a sequence satisfying $\log(1/q_n) = n/c+o(n)$ for some constant $c > 0$. Then, there exists a random variable $Z_c$ concentrated on $\{\lfloor c \rfloor - 1, \lceil c \rceil - 1 \}$ such that, for every fixed $r \geq 1$,
\begin{align*}
\left( S_n^{(q_n)}(1), \ldots, S_n^{(q_n)}(r) \right) \underset{n \to \infty}{\overset{(d)}{\to}} \left( G_1, G_1+G_2, \ldots, G_1+ \ldots + G_r \right),
\end{align*}
where the $G_i$'s are i.i.d.~random variables, each with distribution $\Geom(1/(Z_c+1))$.
\end{theorem}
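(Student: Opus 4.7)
The plan is to use a classical probabilistic construction of $S_n^{(q)}$ that is equivalent to Worpitzky's identity, reducing the theorem to the analysis of partial sums of geometric random variables. I introduce an auxiliary random integer $M \in \Z_{\geq 0}$ with $\P(M = m) \propto (m+1)^n q_n^m$ and, conditionally on $M$, an i.i.d.\ sample $X_1, \ldots, X_n$ uniform on $\{0, 1, \ldots, M\}$. Let $\sigma$ be the permutation that lists the indices $i \in \{1, \ldots, n\}$ in nondecreasing order of $X_i$, with ties broken by increasing index. A direct count of the tuples $(X_1, \ldots, X_n) \in \{0, \ldots, m\}^n$ compatible with a fixed $\sigma_0$ (weakly increasing along $\sigma_0$ with a strict jump exactly at each descent) gives $\binom{m - d(\sigma_0) + n}{n}$, hence $\P(\sigma = \sigma_0 \mid M = m) = \binom{m - d(\sigma_0) + n}{n}/(m+1)^n$. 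Averaging over $M$ and using $\sum_{k \geq 0}\binom{k+n}{n} q^k = (1-q)^{-n-1}$ then yields $\P(\sigma = \sigma_0) \propto q_n^{d(\sigma_0)}$, so $\sigma$ and $S_n^{(q_n)}$ have the same law; this is the probabilistic form of Worpitzky's identity $Z_{n,q} = (1-q)^{n+1}\sum_{m \geq 0}(m+1)^n q^m$.

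I then analyse the law of $M$ by Laplace's method. Writing $\log\bigl[(m+1)^n q_n^m\bigr] = n\bigl[\log(m+1) - m/c\bigr] + m\cdot o(n)$, the function $g(x) = \log(x+1) - x/c$ on $\R_{\geq 0}$ is strictly concave with unique maximum at $x = c-1$, so that on $\Z_{\geq 0}$ its maximum is attained at one or both of $\lfloor c \rfloor - 1$ and $\lceil c \rceil - 1$. Any other integer gives an exponentially smaller contribution, so $M$ eventually concentrates on this (at most two-element) set. I define $Z_c$ as the weak limit of $M$, passing to a subsequence if necessary in the critical case where $g(\lfloor c \rfloor - 1) = g(\lceil c \rceil - 1)$, where the subleading $o(n)$ term in $\log(1/q_n)$ produces a nontrivial mixture on the two values.

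To conclude, I describe $(\sigma(1), \ldots, \sigma(r))$ in terms of the $X_i$. By construction, $\sigma(1), \ldots, \sigma(k)$ are exactly the indices $i$ with $X_i = 0$, listed in increasing order, where $k = |\{i : X_i = 0\}|$. Conditionally on $M = m$ the indicators $\mathbbm{1}[X_i = 0]$ are i.i.d.\ Bernoulli$\bigl(1/(m+1)\bigr)$, so the positions of their successive ones are partial sums of i.i.d.\ $\Geom(1/(m+1))$ random variables, truncated to $\{1, \ldots, n\}$; for fixed $r$ and $M$ concentrated on a bounded set, both the truncation and the event $\{k < r\}$ contribute $o(1)$ error. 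Integrating over $M \to Z_c$ then yields the claimed convergence of $(S_n^{(q_n)}(1), \ldots, S_n^{(q_n)}(r))$ to $(G_1, G_1 + G_2, \ldots, G_1 + \cdots + G_r)$ with the $G_i$'s i.i.d.\ $\Geom(1/(Z_c + 1))$ given $Z_c$.

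The main analytic obstacle is the Laplace analysis of $M$ at critical values of $c$ for which $g$ is maximised simultaneously at $\lfloor c \rfloor - 1$ and $\lceil c \rceil - 1$: the law of $Z_c$ is then dictated by the precise subleading behaviour of $\log(1/q_n) - n/c$, which must be tracked carefully and may require subsequential extraction. Once $Z_c$ is identified, the rest of the argument is a routine application of the Bernoulli-geometric gap structure of i.i.d.\ indicator sequences.
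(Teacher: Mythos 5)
Your proposal is correct, and it hinges on the same probabilistic insight as the paper's proof---namely that the first $r$ entries of the permutation are the first $r$ positions of the minimal value in a sequence of i.i.d.\ discrete uniforms, whose gaps are geometric---but it packages the reduction differently, and more elegantly. The paper proceeds in two stages: it first proves (Proposition~\ref{prop:concentration}, via Lemma~\ref{lem:permufix} and Corollary~\ref{cor:fixeddescents}) that the descent count of $S_n^{(q_n)}$ concentrates on $\{\lfloor c\rfloor-1,\lceil c\rceil-1\}$, and then, conditioning on the descent count $k$, it identifies the law of $S_n^{(q_n)}$ with that of its sampler $D_n^k$ (i.i.d.\ $A_i$ uniform on $\{1,\dots,k+1\}$, sort by value), which is only \emph{approximately} the target law: it equals the uniform law on $\kD_n^k$ only after conditioning on the event $E_n^k$ that the construction produces exactly $k$ descents, so a second high-probability argument is required. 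Your Worpitzky-based construction sidesteps both conditionings: introducing the latent $M$ with $\P(M=m)\propto (m+1)^n q_n^m$ and then sorting i.i.d.\ uniforms on $\{0,\dots,M\}$ gives $S_n^{(q_n)}$ \emph{exactly} in law (your stars-and-bars count $\binom{m-d(\sigma_0)+n}{n}$ and the resummation via $\sum_k\binom{k+n}{n}q^k=(1-q)^{-n-1}$ are correct), and the whole burden shifts to the Laplace concentration of $M$ on the same set $\{\lfloor c\rfloor-1,\lceil c\rceil-1\}$, which is the same maximization of $\log(k+1)-k/c$ that appears in the paper's Proposition~\ref{prop:concentration}. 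What your route buys is an exact representation (no $E_n^k$-conditioning and no ``almost-uniform'' step), at the mild cost of a tail estimate in the Laplace analysis showing that large $m$ contribute negligibly (which you should make explicit: for $m$ large, $m\log q_n\sim -mn/c$ dominates $n\log(m+1)$, so the tail is geometrically small). Both arguments share the same residual subtlety, which you correctly flag: at critical $c$ where the two integers tie, the distribution of $Z_c$ depends on the $o(n)$ term in $\log(1/q_n)$ and may require a subsequential extraction; the paper's statement is equally loose on this point.
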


On the other hand, in the nondegenerate phase, the limit of the first elements of the descent-biased permutation, after rescaling, can be expressed in terms of i.i.d.~exponential random variables:

\begin{theorem}[Nondegenerate phase]
\label{thm:mainnondegenerate}
Let $(q_n)_{n \geq 1}$ be a sequence of real numbers in $[0,1]$ satisfying $q_n \to 0$ and $\log q_n = o(n)$. Then we have, for every fixed $r \geq 1$,
\begin{align*}
\frac{\log(1/q_n)}{n}\left( S_n^{(q)}(1), \ldots, S_n^{(q)}(r)\right) \underset{n \to \infty}{\overset{(d)}{\to}} \left( E_1, E_1+E_2, \ldots, E_1+\ldots+E_r \right),
\end{align*}
where the $E_i$'s are i.i.d.~exponential random variables of parameter $1$.
\end{theorem}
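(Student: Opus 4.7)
The plan is to derive an exact formula for the joint law of $(S_n^{(q)}(1),\ldots,S_n^{(q)}(r))$ on increasing prefixes, to reduce the problem to the marginal of $S_n^{(q)}(r)$, and then to perform saddle-point asymptotics on the Eulerian polynomials. The starting point is the observation that, for $a_1<a_2<\cdots<a_r$ in $\llbracket 1,n\rrbracket$, a short recursive calculation---remove the prefix $\sigma(j)=a_j$ for $j\le r$, rank-transform the suffix to a permutation of $\llbracket 1,n-r\rrbracket$, and note that all prefix descents vanish while the only remaining dependence on $(a_1,\ldots,a_r)$ is the single indicator $\mathbf{1}[\sigma(r+1)<a_r]$---yields
\begin{align*}
\P\bigl(S_n^{(q)}(j)=a_j,\ 1\le j\le r\bigr)=\frac{qA_{n-r}(q)+(1-q)T_{n-r,\,a_r-r+1}(q)}{A_n(q)},
\end{align*}
where $A_m(q)$ is the $m$-th Eulerian polynomial and $T_{m,a}(q):=\sum_{\sigma\in\kS_m,\,\sigma(1)\ge a}q^{d(\sigma)}$. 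The key structural point is that the right-hand side depends only on $a_r$. Consequently, conditionally on the first $r$ values being increasing with $S_n^{(q)}(r)=a_r$, the vector $(S_n^{(q)}(1),\ldots,S_n^{(q)}(r-1))$ is uniform on the $(r-1)$-element subsets of $\llbracket 1,a_r-1\rrbracket$. This discrete structure mirrors the classical fact that, conditionally on $T_r=s$ for the partial sums $T_j=E_1+\cdots+E_j$ of i.i.d.\ $\Exp(1)$ variables, the vector $(T_1,\ldots,T_{r-1})$ is distributed as the order statistics of $r-1$ uniforms on $[0,s]$.

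An analogous computation for prefixes that contain at least one descent gives $\P(\sigma(j)>\sigma(j+1))\le q_n\cdot n^2\cdot A_{n-2}(q_n)/A_n(q_n)$ for each $j<r$. Once the asymptotic $A_{n-r}(q_n)/A_n(q_n)\sim(L_n/n)^r$, with $L_n:=\log(1/q_n)$, is in hand, a union bound over $j<r$ yields $\P(S_n^{(q_n)}(1)<\cdots<S_n^{(q_n)}(r))\to 1$, since $q_nL_n^2=L_n^2e^{-L_n}\to 0$. It then remains only to prove that $(L_n/n)\,S_n^{(q_n)}(r)\to\Gamma(r,1)$ in distribution; the conditional uniform order-statistic structure above will propagate this to the full joint convergence $(L_n/n)\cdot(S_n^{(q_n)}(1),\ldots,S_n^{(q_n)}(r))\to(T_1,\ldots,T_r)$.

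The asymptotic inputs are obtained from the classical identity $A_n(q)/(1-q)^{n+1}=\sum_{k\ge 0}(k+1)^n q^k$. A saddle-point analysis on this sum, with saddle at $k^*=n/L_n$ and Gaussian curvature of order $L_n^2/n$, yields $A_n(q_n)\sim(1-q_n)^{n+1}(n/L_n)^n e^{L_n-n}\sqrt{2\pi n}/L_n$ and hence $A_{n-r}(q_n)/A_n(q_n)\sim(L_n/n)^r$ for any fixed $r$. A parallel saddle-point, applied to a closed expression for $\P(S_n^{(q)}(1)\ge a)=T_{n,a}(q)/A_n(q)$ derived from the same one-step recursion, gives $T_{n-r,a}(q_n)/A_{n-r}(q_n)\sim e^{-aL_n/n}$ for $a=\Theta(n/L_n)$. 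Combining these estimates with $\binom{a_r-1}{r-1}\sim a_r^{r-1}/(r-1)!$ and the joint formula above, one obtains $(n/L_n)\cdot\P(S_n^{(q_n)}(r)=a_r,\ \text{increasing prefix})\to x^{r-1}e^{-x}/(r-1)!$ at $a_r=\lceil xn/L_n\rceil$, which is precisely the density of $\Gamma(r,1)$. The principal difficulty lies in establishing these saddle-point asymptotics uniformly in the nondegenerate regime $q_n\to 0$, $\log q_n=o(n)$: the saddle location $n/L_n$ itself diverges with $n$, and the Gaussian prefactor must be tracked with sufficient precision to extract simultaneously the polynomial ratio $(L_n/n)^r$ and the exponential tail $e^{-aL_n/n}$.
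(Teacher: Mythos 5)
Your route is genuinely different from the paper's. The paper first establishes the marginal $\frac{\log(1/q_n)}{n} S_n^{(q_n)}(1) \to \Exp(1)$ by deriving a recursion for $M_r(x) = (\partial/\partial y)^r S(x,y)|_{y=1}$ from the differential equation for $S(x,y)$ and applying a discrete Laplace argument (Theorem~\ref{thm:momentsbehaviour} and Lemma~\ref{lem:f0}), then bootstraps to joint convergence by a renewal-type conditioning: given $S_n^{(q_n)}(1)=k$ and an increasing prefix, $S_n^{(q_n)}(2)$ is $1$ plus an independent copy of $S_{n-1}^{(q_n)}(1)$ conditioned to be $\geq k$, whose scaling limit is $\Exp(1)$ conditioned to exceed $x$. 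Your approach instead writes an exact formula for the joint probability of an increasing prefix, observes that it depends only on the last coordinate $a_r$, and exploits the resulting conditional uniform-order-statistics structure of $(S_n^{(q_n)}(1),\ldots,S_n^{(q_n)}(r-1))$ given $S_n^{(q_n)}(r)$, which mirrors the analogous property of exponential partial sums. This is an attractive structural shortcut, and the asymptotic $A_{n-r}(q_n)/A_n(q_n) \sim (\log(1/q_n)/n)^r$ you invoke is essentially the paper's Lemma~\ref{lem:f0} applied $r$ times.

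The real gap is in the marginal. Your argument still needs the tail asymptotic $\P(S_{n-r}^{(q_n)}(1) \geq a) \to e^{-x}$ for $a \sim xn/\log(1/q_n)$, which is precisely the $r=1$ case of the theorem and the analytical heart of the whole proof. You propose to obtain it from a ``closed expression for $T_{n,a}(q)$ derived from the same one-step recursion'' plus a ``parallel saddle-point'', but you never exhibit such a closed expression. Summing the one-step relation $P_n^k(q) = qA_{n-1}(q) + (1-q)T_{n-1,k}(q)$ over $k\geq a$ yields a two-index recursion rather than a closed form, and iterating it does not evidently produce anything amenable to a saddle-point. The paper circumvents this by computing moments via the differential equation~\eqref{eq:equadiff} rather than tails, and one would need a comparably careful device here. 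Until that piece is supplied, the proposal is incomplete. A secondary remark: you assert a local limit $(n/\log(1/q_n))\,\P(S_n^{(q_n)}(r)=a_r,\,\text{increasing prefix}) \to x^{r-1}e^{-x}/(r-1)!$, but this is more than you need. Summing your exact formula over $a_r \leq yn/\log(1/q_n)$ with $\binom{a_r-1}{r-1}\sim a_r^{r-1}/(r-1)!$ reduces the problem to a Riemann sum against the tail CDF, with a trivial dominated-convergence bound since the summands are $O((\log(1/q_n)/n)^r a_r^{r-1})$; so only the distributional convergence of $S_{n-r}^{(q_n)}(1)$ is required, and insisting on a local limit for $T_{n,a}$ would make the already-unresolved step strictly harder.
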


In order to study the constant phase, we define the following Markov process $(X_i)_{i \geq 1}$ with $X_i \in [0,1]$ for every positive integer $i$.
The initial value $X_1$ has density $\frac{\log(1/q)}{1-q} q^x$, and, for all $j \geq 1$, $\left( X_{j+1} | X_j \right)$ has density proportional to
\begin{align*}
\frac{\log(1/q)}{1-q} q^x \left( q \mathds{1}_{x < X_j} + \mathds{1}_{x \geq X_j}  \right) = \frac{\log(1/q)}{1-q} q^x q^{\mathds{1}_{x<X_j}}.
\end{align*}

In particular, observe that, for any $j \geq 1$, $(X_1, \ldots, X_j)$ is absolutely continuous with respect to the Lebesgue measure on $[0,1]^j$. For $q=1$, the quotient $\frac{\log(1/q)}{1-q}$ is interpreted as $1$. In this case, the $X_i$ are all independent and follow a uniform distribution on $[0,1]$.

\begin{theorem}[Constant phase]
\label{thm:mainconstant}
Fix $r \geq 1$, and assume that $q \in (0,1]$ is constant. The following convergence holds in distribution:
\begin{align*}
\frac{1}{n}\left( S_n^{(q)}(1), \ldots, S_n^{(q)}(r)\right) \underset{n \to \infty}{\overset{(d)}{\to}} \left( X_1, \ldots, X_r \right). 
\end{align*}
\end{theorem}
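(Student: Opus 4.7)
The plan is to compute the joint law of $(S_n^{(q)}(1), \ldots, S_n^{(q)}(r))$ exactly, then take $k_i = \lfloor nx_i \rfloor$ and let $n \to \infty$, and finally identify the limiting density with that of $(X_1, \ldots, X_r)$.

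Fix pairwise distinct integers $k_1, \ldots, k_r \in \llbracket 1, n \rrbracket$, set $A = \llbracket 1, n \rrbracket \setminus \{k_1, \ldots, k_r\}$, and decompose $d(\sigma) = d(k_1, \ldots, k_r) + \mathds{1}_{k_r > \sigma(r+1)} + d(\sigma|_{[r+1,n]})$ for any $\sigma$ with $\sigma(i) = k_i$ for $i \le r$. Summing over such $\sigma$ and relabelling $A$ order-preservingly onto $\llbracket 1, n-r \rrbracket$ yields
\begin{align*}
\P\bigl(S_n^{(q)}(1) = k_1, \ldots, S_n^{(q)}(r) = k_r\bigr) = \frac{q^{d(k_1, \ldots, k_r)}}{A_n(q)}\, T_{n-r}(q, m_{k_r}),
\end{align*}
where $A_n(q) := Z_{n,q}$ is the Eulerian polynomial, $m_{k_r}$ is the number of elements of $A$ smaller than $k_r$ (so $m_{k_r}/(n-r) \to x_r$), and
\begin{align*}
T_N(q, m) := \sum_{\tau \in \kS_N} q^{d(\tau) + \mathds{1}_{\tau(1) \leq m}} = A_N(q) + (q-1)\sum_{j=1}^m A_N^{(j)}(q),
\end{align*}
with $A_N^{(j)}(q) := \sum_{\tau \in \kS_N,\,\tau(1)=j} q^{d(\tau)}$.

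The EGF $\sum_n A_n(q)z^n/n! = (q-1)/(q - e^{z(q-1)})$ has a simple pole at $z_0 := \log(1/q)/(1-q)$, giving $A_n(q)/n! \sim q^{-1} z_0^{-(n+1)}$ and hence $n^r A_{n-r}(q)/A_n(q) \to z_0^r$. The heart of the proof is the local limit
\begin{align*}
\frac{T_N(q, m_N)}{A_N(q)} \longrightarrow q^x \qquad \text{whenever } m_N/N \to x,
\end{align*}
equivalent to the Riemann-sum convergence $\tfrac{1}{A_N(q)}\sum_{j=1}^{\lfloor xN \rfloor} A_N^{(j)}(q) \to (1-q^x)/(1-q) = \int_0^x \tfrac{\log(1/q)}{1-q}\, q^y\,dy$. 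This in turn rests on the uniform local limit $N \cdot A_N^{(j_N)}(q)/A_N(q) \to \tfrac{\log(1/q)}{1-q}\, q^y$ for $j_N/N \to y$, which I would establish through singularity analysis of the bivariate EGF $\sum_{n,k} A_n^{(k)}(q)t^k z^n/n!$, or equivalently from the identity $A_N^{(k)}(q) = T_{N-1}(q, k-1)$ combined with the previous asymptotic in a bootstrap argument. Obtaining uniformity in $j$ (or at least enough control on the tails of these ratios to handle the partial sums) is the main technical obstacle.

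Assembling the ingredients, for pairwise distinct $x_1, \ldots, x_r \in (0,1)$ and $k_i = \lfloor n x_i \rfloor$, the previous steps give
\begin{align*}
n^r\,\P\bigl(S_n^{(q)}(1) = k_1, \ldots, S_n^{(q)}(r) = k_r\bigr) \longrightarrow \left(\frac{\log(1/q)}{1-q}\right)^r q^{x_r + d(x_1, \ldots, x_r)},
\end{align*}
with $d(x_1, \ldots, x_r) := \sum_{j=1}^{r-1} \mathds{1}_{x_j > x_{j+1}}$. A direct multiplication of the marginal density of $X_1$ with the $r-1$ successive conditional densities gives exactly the same expression: the intermediate normalizing factors $q^{-x_j}$ telescope to leave only $q^{x_r}$, while the indicator weights $q^{\mathds{1}_{x_{j+1}<x_j}}$ collect into $q^{d(x_1, \ldots, x_r)}$. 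Thus the limit coincides with the joint density of $(X_1, \ldots, X_r)$. Weak convergence then follows from Scheffé's lemma applied to the piecewise-constant densities extending the point masses, which are uniformly bounded on $[0,1]^r$ and converge pointwise off the (zero-measure) diagonal hyperplanes. The boundary case $q = 1$ is covered under the convention $\log(1/q)/(1-q) = 1$, recovering the classical limit of the first $r$ rescaled values of a uniform random permutation.
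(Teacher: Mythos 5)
Your approach is genuinely different from the paper's and structurally sound. The exact identity
\[
\P\bigl(S_n^{(q)}(1)=k_1,\ldots,S_n^{(q)}(r)=k_r\bigr)=\frac{q^{d(k_1,\ldots,k_r)}\,T_{n-r}(q,m_{k_r})}{A_n(q)},
\]
with $T_N(q,m)=A_N(q)-(1-q)\sum_{j\le m}A_N^{(j)}(q)$, collapses the whole $r$-dimensional local limit into one one-dimensional question, and the telescoping of the conditional normalizers $q^{X_j}/C_q$ correctly identifies the joint density of $(X_1,\ldots,X_r)$ as $C_q^r\,q^{x_r}\,q^{d(\sigma_r)}$; that this coincides with your computed pointwise limit is exactly what makes Scheff\'e applicable. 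The paper instead builds the bivariate exponential generating function, extracts all factorial moments of $S_n^{(q)}(1)/n$ via a moment recursion, upgrades the resulting distributional convergence to a local limit by a monotonicity argument (Proposition~\ref{prop:loclim1}(i)), and then induces on $r$ (Proposition~\ref{prop:loclim1}(ii)). Your route is more elementary and hands you the exact joint law in closed form; the paper's moment route feeds into other parts of the paper (Theorems~\ref{thm:maindegenerate},~\ref{thm:mainnondegenerate}) where $q$ is allowed to vary with $n$, which your method does not address.

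There is, however, a genuine gap. Everything hinges on $T_N(q,m_N)/A_N(q)\to q^x$ for $m_N/N\to x$, and you do not establish it. You also overstate what this requires: it is precisely $\P\bigl(S_N^{(q)}(1)\le m_N\bigr)\to\tfrac{1-q^x}{1-q}$, which is just convergence of the distribution function of $S_N^{(q)}(1)/N$ to that of $X_1$ at a continuity point. You therefore need only the \emph{weak} convergence $S_N^{(q)}(1)/N\overset{(d)}{\to}X_1$, not a uniform local limit $N\,A_N^{(j_N)}(q)/A_N(q)\to C_q q^y$, and the tail-control concern is a red herring. But even the weak convergence is unproved: the ``bootstrap'' through $A_N^{(k)}(q)=T_{N-1}(q,k-1)$ is circular without an independent base case, and the singularity analysis of the bivariate EGF is only a plan. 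The paper supplies exactly this missing input through the moment recursion~\eqref{eq:moment_recursion}, Theorem~\ref{thm:momentsbehaviour} and Corollary~\ref{cor:expectation}. Import Corollary~\ref{cor:expectation} (or carry out the bivariate singularity analysis you sketch) and your argument closes.

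One point worth flagging: your density $D^{(r)}(x_1,\ldots,x_r)=C_q^r\,q^{x_r}\,q^{d(\sigma_r)}$ differs from the expression in the paper's Lemma~\ref{lem:densityconstant}, which carries an extra factor $\tfrac{r!}{A_r(q)}q^{\sum_{i<r}x_i}$. Your formula is the one consistent with the Markov chain as defined: integrating $D^{(r)}$ over $x_r$ reproduces $D^{(r-1)}$ (the $q^{x_{r-1}}/C_q$ cancels), whereas the Lemma~\ref{lem:densityconstant} expression does not pass this marginalization check for $q\ne1$, and for $r=2$ it predicts $\P(X_2<X_1)=\tfrac{q}{1+q}$ rather than the correct $\tfrac{q(C_q-1)}{1-q}$. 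Your pointwise limit $C_q^r q^{x_r}q^{d(\sigma_r)}$ agrees with the correct density, so Theorem~\ref{thm:mainconstant} as stated is true, but the intermediate Lemma~\ref{lem:densityconstant} (and the constant $K_r$ in Proposition~\ref{prop:loclim1}(ii), which in fact depends on $k_r$ through $T_{n-r}(q,m_{k_r})/A_{n-r}(q)\to q^{x_r}$) appears to contain an error; you may wish to verify this with the authors rather than try to reconcile your answer with theirs.
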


\paragraph*{Random descent-biased trees.}

One of our motivations to investigate these descent-biased permutations is the study of a similar model of descent-biased trees. Fix $n \geq 1$, and let $\kT_n$ denote the set of trees with $n$ vertices labelled from $1$ to $n$, and a marked vertex called the root. For an edge $e$ of a tree $T \in \kT_n$, we write $e-, e+$ for its two endpoints, with $e-$ being the endpoint closer to the root. Moreover, let us write $\ell(x)$ for the label of a vertex $x$. We say that an edge $e$ is a descent if $\ell(e-) > \ell(e+)$.  For any tree $T \in \kT_n$, we let $d(T)$ denote the number of descents of $T$. For any $q \in [0,\infty)$, we consider the random variable $\cT_n^{(q)}$ taking its values in $\kT_n$ such that, for any tree $T \in \kT_n$,

\begin{align*}
\P \left( \cT_n^{(q)} = T \right) = \frac{1}{Y_{n,q}} q^{d(T)},
\end{align*}
where $Y_{n,q} = \sum_{T \in \kT_n} q^{d(T)}$.

The (weighted) enumeration of these trees goes back to E{\u{g}}ecio{\u{g}}lu and Remmel \cite{ER86}. For the same symmetry reasons as for permutations, we may restrict ourselves to the case $q \in [0,1]$. When $q=1$, we recover the model of Cayley trees (uniformly random rooted labelled trees, see for example \cite[Section 1.2.3]{Drmota09}), while the case $q=0$ corresponds to random recursive trees \cite[Section 1.3.1]{Drmota09}, as the labels are almost surely increasing from the root to the leaves in this case. Our goal is to characterize the local limit of the tree $\cT_n^{(q)}$, that is, the random tree $\cT_*^{(q)}$ whose $k$-neighbourhood of the root is the limit of the $k$-neighbourhood of the root of $\cT_n^{(q)}$ for all fixed $k$ (see Section \ref{sec:trees} for more details). It turns out that, if $q_n \to 0$, the tree $\cT_n^{(q_n)}$ does not admit a local limit in the usual sense. However, when $q$ is constant, it is possible to use Theorem \ref{thm:mainconstant} to characterize the local limit of $\cT_n^{(q)}$.

\begin{theorem}
\label{thm:locallimittree}
For fixed $q \in (0,1]$, the sequence of random trees $\left(T_n^{(q)}\right)_{n \geq 1}$ admits a local limit, which is one-ended.
\end{theorem}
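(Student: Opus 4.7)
The plan is to exhibit the local limit as a one-ended random labelled tree built from a spine $(\rho^*_1,\rho^*_2,\ldots)$ whose rescaled labels are distributed as the Markov chain $(X_1,X_2,\ldots)$ of Theorem~\ref{thm:mainconstant}, together with finite labelled subtrees attached at each spine vertex, and then to verify convergence of $k$-neighbourhoods. The starting point is the recursive decomposition: if $T\in\kT_n$ has root of label $\ell$ and children $v_1,\ldots,v_k$ carrying subtrees $T_1,\ldots,T_k$ on a label partition $L_1\sqcup\cdots\sqcup L_k = \llbracket 1,n\rrbracket\setminus\{\ell\}$, then
\begin{align*}
d(T) = \sum_{i=1}^k \mathbbm{1}[\ell(v_i) < \ell] + \sum_{i=1}^k d(T_i),
\end{align*}
so the descent weight factorises and, conditionally on $(L_i)$ and $(v_i)$, the $T_i$'s are independent descent-biased trees on their respective label sets.

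The heart of the argument is a one-step asymptotic analysis of the root's structure, combining this decomposition with enumerative identities for $Y_{n,q}$ in the spirit of E{\u{g}}ecio{\u{g}}lu-Remmel and a standard exponential generating function computation. I expect to establish: (i) that $\ell(\rho_n)/n$ converges in distribution to a random variable with density $\tfrac{\log(1/q)}{1-q}q^x$ on $[0,1]$, matching $X_1$; (ii) that the number of children of the root is tight with an explicit limit law; (iii) that the subtree sizes off the root concentrate on a configuration with exactly one macroscopic block of size $n - O_\P(1)$ and all remaining blocks of bounded size; and (iv) that the rescaled root label of this ``giant'' subtree converges, conditionally on $X_1$, to $X_2$, reproducing the Markov kernel of Theorem~\ref{thm:mainconstant}.

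Applying the same analysis recursively to the giant subtree (which is itself a descent-biased tree on its label set, of size $n - O_\P(1)$) identifies the whole spine: the rescaled spine labels converge jointly to $(X_1,\ldots,X_r)$ for every fixed $r$, while the non-spine subtrees attached to each spine vertex are finite in probability with explicit limit laws. Since for any fixed $k$ the $k$-neighbourhood of the root of $\cT_n^{(q)}$ is determined by the first $O(k)$ spine vertices and their attached subtrees, this joint convergence yields both the existence of the local limit and the explicit description above. One-endedness is then immediate from the construction: the spine is an infinite ray from the root, and since every non-spine subtree is finite (as the distributional limit of subtrees of size $O_\P(1)$), the spine is the unique such ray.

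The main obstacle is the one-step statement above --- in particular, establishing simultaneously the existence of a unique macroscopic subtree off the root and the fact that its rescaled root label obeys the prescribed conditional kernel of $X_2$ given $X_1$. This is exactly where Theorem~\ref{thm:mainconstant} enters the picture: it supplies the correct target conditional distribution on $[0,1]$, while the tree-side analysis must recover it from the factorisation of the descent weight together with the asymptotics of $Y_{n,q}$. Once this consistency between the spine dynamics and the Markov chain $(X_i)$ is established, the iteration producing the full spine and the resulting local convergence proceed in a routine manner.
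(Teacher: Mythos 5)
Your high-level strategy---decompose at the root, identify a unique macroscopic subtree, iterate to build a spine---is the right kind of approach and shares the spine philosophy of the paper's proof, but claim~(i) is wrong for $q<1$, and this propagates to claim~(iv), so the ``one-step analysis'' you rightly identify as the heart of the argument does not have the target distributions you expect. The rescaled root label of $\cT_n^{(q)}$ does \emph{not} converge to the law with density $\frac{\log(1/q)}{1-q}q^x$: the root of a tree is pulled towards small labels more strongly than the first entry of a permutation, since every edge out of the root can be a descent. Already at $n=3$, $q=\tfrac12$, the root-label distribution is $(0.5,0.3,0.2)$ while the first-element distribution of $S_3^{(1/2)}$ is $\bigl(\tfrac{6}{13},\tfrac{4}{13},\tfrac{3}{13}\bigr)$. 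In the limit, the paper's decomposition along the ancestral line of vertex $n$ (Proposition~\ref{prop:treeisforest}) shows that the rescaled spine labels converge to $\bigl(L(X_1),L(X_2),\ldots\bigr)$, where $L$ is the quantile function of the limiting density $G$ of root labels of the forest components (Proposition~\ref{prop:shapesoftrees}). From the formula for $G_\tau$ in that proof one sees that $G$ is strictly decreasing when $q<1$ (the size-two contribution alone is $\frac{x_0^2}{C_q}\bigl(1-(1-q)s\bigr)$), so $L\neq\mathrm{id}$ and the root-label density is $C_q\,q^{H(y)}G(y)$ with $H=\int_0^{\cdot}G$, not $C_q\,q^y$. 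Correspondingly, the kernel in claim~(iv) should be proportional to $q^{H(y_2)+\mathds{1}[y_2<y_1]}\,G(y_2)$, not $q^{y_2+\mathds{1}[y_2<y_1]}$.

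Even setting aside the wrong target laws, the step you defer to as the ``main obstacle'' is where essentially all the work lies, and the recursion you propose is delicate for a reason you gloss over: the giant subtree is \emph{not} an unconditioned descent-biased tree on its label set, because it carries an extra factor $q^{\mathds{1}[\ell(\text{its root})<\ell(\text{previous spine vertex})]}$ from the edge joining it to the root, and this conditioning is precisely what produces the spine Markov kernel. The paper sidesteps the root-by-root recursion entirely by cutting $\cT_n^{(q)}$ once along the ancestral line of vertex $n$, yielding a forest of $h_n+1$ components reassembled by a descent-biased permutation $\sigma\in\kS_{h_n}$; the estimates that make this work are the uniform saddle-point asymptotics for $[x^n]A(x,q)^m$ (Proposition~\ref{prop:power_coefficients}), the height and subtree-size bounds of Lemma~\ref{lem:heightn} (which in turn need the path-length asymptotics of Proposition~\ref{prop:pathlength}), the forest convergence in Proposition~\ref{prop:shapesoftrees}, and the uniform local limit for permutations in Proposition~\ref{prop:loclim1}. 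Without analogues of these inputs, and with the incorrect one-step distributions, the sketch as written does not go through.
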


The existence and characterization of local limits of similar models of weighted random trees have recently been investigated by Durhuus and \"Unel \cite{DU22,DU23,DUpre}, but with the weight based on the height rather than the number of descents. A somewhat similar tree model (though different aspects---specifically the height---are studied) also occurs in a recent paper by Addario-Berry and Corsini \cite{ABC21}, where a binary search tree is built from a $q$-Mallows permutation.

\paragraph*{Overview of the paper.}

Section \ref{sec:degenerate} is devoted to the degenerate phase of descent-biased permutations and the proof of Theorem \ref{thm:maindegenerate}. We introduce and investigate generating functions associated with descent-biased permutations in Section \ref{sec:genfunc}, before proving Theorems \ref{thm:mainnondegenerate} and \ref{thm:mainconstant} in Section~\ref{sec:proofs}. In Section \ref{sec:mesoscopic}, we refine Theorem \ref{thm:maindegenerate} and show that, in the nondegenerate regime, the first elements of descent-biased permutations admit a nontrivial deterministic scaling limit. In Section~\ref{sec:genfun_trees}, we use analysis of generating functions associated with descent-biased trees to prove auxiliary results, which are finally used in Section \ref{sec:trees} to prove Theorem \ref{thm:locallimittree}.

\section{The degenerate phase}
\label{sec:degenerate}

Our goal in this section is to prove Theorem \ref{thm:maindegenerate}. To this end, the main idea is to consider permutations with a given number of descents. In this section, we fix $c>0$ and consider a sequence $(q_n)$ such that $\log(1/q_n) = n/c+o(n)$.

\subsection{Concentration}

Let us first show that the number of descents of $S_n^{(q_n)}$ is concentrated on at most two values in the regime we are considering in this section.

\begin{proposition}
\label{prop:concentration}
Fix $c>0$, and assume that $\log(1/q_n) = n/c+o(n)$ as $n \to \infty$. Then, with high probability, the number of descents of $S_n^{(q_n)}$ is either $\lfloor c \rfloor - 1$ or $\lceil c \rceil - 1$.
\end{proposition}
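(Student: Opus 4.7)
The plan is to analyze $d(S_n^{(q_n)})$ via its explicit pmf
\[
\P\bigl(d(S_n^{(q_n)}) = k\bigr) = \frac{A(n,k)\,q_n^k}{A_n(q_n)},
\]
where $A(n, k)$ is the Eulerian number (number of permutations of $\llbracket 1, n \rrbracket$ with $k$ descents) and $A_n(x) = \sum_k A(n, k) x^k$ is the Eulerian polynomial. I would compare $A(n, k) q_n^k$ to the weight $A(n, k^*) q_n^{k^*}$ at the nonnegative integer $k^* := \lceil c \rceil - 1 \geq 0$, and show that the former is negligible whenever $k \notin \{\lfloor c \rfloor - 1, \lceil c \rceil - 1\}$, uniformly enough that the tail can be summed.

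Two estimates on Eulerian numbers drive the argument. The first is the pointwise asymptotic $A(n, k) = (k+1)^n(1 + o(1))$ as $n \to \infty$ for each fixed $k \geq 0$, which follows at once from the inclusion--exclusion identity $A(n, k) = \sum_{j=0}^{k}(-1)^j\binom{n+1}{j}(k+1-j)^n$ since the leading term $(k+1)^n$ dominates the next one $(n+1)k^n = o((k+1)^n)$. The second, and crucial for handling unbounded $k$, is the uniform upper bound
\[
A(n, k) \leq (k+1)^n \qquad (n, k \geq 0).
\]
To prove it, I would start from the classical identity $A_n(x) = (1-x)^{n+1}\sum_{j \geq 0}(j+1)^n x^j$, expand $(1-x)^{-n-1}$ as a binomial series and equate coefficients of $x^j$ to obtain
\[
(j+1)^n = \sum_{k=0}^{j} A(n, k)\binom{n+j-k}{n};
\]
all summands on the right are nonnegative and the $k = j$ one equals $A(n, j)$, yielding the bound.

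Set $g(t) := \log(t+1) - t/c$, a concave function on $[0, \infty)$ with unique maximum at $t = c - 1$; its integer maximizers are exactly $k_1 := \lfloor c \rfloor - 1$ and $k_2 := \lceil c \rceil - 1$ (the spurious value $k_1 = -1$ occurring when $c \in (0, 1)$ is simply discarded, as $d \geq 0$). Writing $\log(1/q_n) = n/c + \epsilon_n$ with $\epsilon_n = o(n)$, the two estimates combine to give
\[
\frac{\P(d = k)}{\P(d = k^*)} \leq (1 + o(1))\exp\!\bigl(n[g(k) - g(k^*)] - (k - k^*)\,\epsilon_n\bigr).
\]
For any fixed $k \notin \{k_1, k_2\}$, $g(k) - g(k^*) < 0$ strictly and $(k - k^*)\epsilon_n = o(n)$, so each such ratio tends to $0$. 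For $k$ growing, $g(k) - g(k^*) \sim -k/c$, and since $|\epsilon_n| \leq n/(4c)$ for $n$ large, the exponent is bounded by $-nk/(4c)$ once $k$ exceeds a fixed threshold $K$, yielding a geometric tail whose sum is $o(1)$. Summing over all $k \notin \{k_1, k_2\}$ then gives $\P(d(S_n^{(q_n)}) \notin \{k_1, k_2\}) \to 0$.

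The main obstacle is precisely the uniformity in $k$: the pointwise asymptotic $A(n, k) \sim (k+1)^n$ becomes unreliable as soon as $k$ is allowed to grow with $n$, so it is the clean Worpitzky-style inequality $A(n, k) \leq (k+1)^n$ that closes the argument by taming the tail.
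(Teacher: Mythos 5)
Your proposal is correct and follows the same overall skeleton as the paper's proof: express $\P(d(S_n^{(q_n)})=k)\propto A(n,k)q_n^k$, combine the pointwise asymptotic $A(n,k)=(k+1)^n(1+o(1))$ with the uniform bound $A(n,k)\le (k+1)^n$, and observe that the exponent $\log(k+1)-k/c$ attains its integer maximum only at $\lfloor c\rfloor-1$ and $\lceil c\rceil-1$. Where you diverge is in how the two Eulerian-number estimates are obtained. The paper derives both from a single probabilistic construction: an auxiliary permutation $D_n^k$ built from $n$ i.i.d.\ uniform labels in $\{1,\dots,k+1\}$, which gives the injective map $f_k:\{1,\dots,k+1\}^n\to\kS_n$ for the upper bound and, conditioned on a high-probability event, a uniform element of $\kD_n^k$ for the asymptotic. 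You instead use classical analytic identities — the inclusion–exclusion formula for $A(n,k)$ for the asymptotic, and Worpitzky's identity $(j+1)^n=\sum_{k\le j}A(n,k)\binom{n+j-k}{n}$ for the upper bound. Your route is perhaps the more standard one if one only wanted Proposition~\ref{prop:concentration} in isolation, and you are also more explicit than the paper about the geometric tail estimate needed to sum over unboundedly large $k$. The paper's construction has the advantage of being reused directly afterward to prove Theorem~\ref{thm:maindegenerate}, where the limiting law of the first $r$ entries of a permutation conditioned to have $k$ descents is read off from the same i.i.d.\ labels $A_1,\dots,A_n$; your identities do not offer that extra mileage.
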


Let $\kD_n^k$ denote the set of permutations of $\kS_n$ with exactly $k$ descents.
To show Proposition \ref{prop:concentration}, we consider the random permutation $D_n^k$ defined as follows. Let $A_1, \ldots, A_n$ be $n$ i.i.d.~random variables, uniform on $\{ 1, \ldots, k+1 \}$. For all $1 \leq i \leq k+1$, set $F_i := \{ j \in \llbracket 1, n \rrbracket, A_j=i \}$, and let $L_i$ be the list of the elements of $F_i$ sorted in increasing order. Finally, let $D_n^k$ be the concatenation of $L_1, \ldots, L_{k+1}$. It turns out that the permutation $D_n^k$ is almost uniform on $\kD_n^k$:

\begin{lemma}
\label{lem:permufix}
For any fixed integer $k \geq 0$, let $E_n^k$ be the event that $D_n^k$ has $k$ descents. Then, the following hold.
\begin{itemize}
\item[(i)] The event $E_n^k$ occurs asymptotically almost surely:
\begin{align*}
\P \left( E_n^k \right) \underset{n \to \infty}{\to} 1.
\end{align*}

\item[(ii)] 
The permutation $D_n^k$ conditionally on $E_n^k$ is uniform in $\kD_n^k$.
\end{itemize}
\end{lemma}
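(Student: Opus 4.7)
My plan is to establish part (ii) first via a direct counting argument, and then deduce part (i) from a direct probabilistic estimate on the construction. The key preliminary observation is that by construction the elements of each block $L_i$ are listed in increasing order, so $D_n^k$ can only have descents at the (at most $k$) boundaries between consecutive blocks; in particular $D_n^k$ always has at most $k$ descents.

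For part (ii), I would argue that each $\sigma \in \kD_n^k$ is produced by exactly one assignment $(A_1, \ldots, A_n) \in \llbracket 1, k+1 \rrbracket^n$, which immediately gives $\P(D_n^k = \sigma) = (k+1)^{-n}$ and hence uniformity on $\kD_n^k$. Denote by $R_1, \ldots, R_{k+1}$ the $k+1$ maximal increasing runs of $\sigma$ (determined by the $k$ descent positions). If an assignment produces $\sigma$, then the nonempty blocks among $L_1, \ldots, L_{k+1}$, read left to right, give a decomposition of $\sigma$ into consecutive increasing subsequences. Having only $m$ nonempty blocks forces $\sigma$ to have at most $m - 1$ descents, so $m = k+1$; and since $\sigma$ already has $k+1$ maximal runs, each block must coincide with the corresponding run. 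This forces $F_i = R_i$ for all $i$, which determines the assignment uniquely as $A_j = i$ whenever $j \in R_i$. The converse check that this assignment indeed yields $\sigma$ is immediate.

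For part (i), I would estimate $\P(E_n^k)$ directly from the construction rather than from Eulerian-number asymptotics. By the preliminary observation, $E_n^k$ occurs if and only if (a) all $k+1$ blocks are nonempty and (b) each of the $k$ boundaries is a descent, that is, $\max F_i > \min F_{i+1}$ for all $i \in \llbracket 1, k \rrbracket$. The block sizes follow a multinomial distribution with all probabilities equal to $1/(k+1)$, so by standard concentration, all $|F_i|$ are at least $n/(2(k+1))$ with probability tending to $1$. Conditionally on the block sizes, the split of $F_i \cup F_{i+1}$ into a size-$|F_i|$ part $F_i$ and a size-$|F_{i+1}|$ part $F_{i+1}$ is uniform, which yields
\begin{align*}
\P\bigl( \max F_i < \min F_{i+1} \,\big|\, |F_i|, |F_{i+1}|\bigr) = \binom{|F_i|+|F_{i+1}|}{|F_i|}^{-1},
\end{align*}
and this is exponentially small in $n$ on the high-probability event that both sizes are $\Theta(n)$. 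A union bound over the $k$ boundaries then gives $\P(E_n^k) \to 1$.

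I do not foresee any serious obstacle; both steps are essentially self-contained and $k$ is fixed throughout. The step requiring the most care is the uniqueness-of-assignment argument in part (ii), where one has to correctly rule out decompositions of $\sigma$ with empty blocks or with block sizes not aligning with the maximal runs.
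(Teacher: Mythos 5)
Your proof is correct. For part (ii) you take the same route as the paper (each $\sigma \in \kD_n^k$ arises from a unique assignment, hence $\P(D_n^k = \sigma) = (k+1)^{-n}$), but you supply the uniqueness argument explicitly by observing that the nonempty blocks form a decomposition of $\sigma$ into consecutive increasing subsequences, so with $k$ descents all $k+1$ blocks must be nonempty and must coincide with the maximal increasing runs; the paper merely asserts that the event determines the $A_i$'s. For part (i) you take a somewhat different route: the paper shows directly that, for each $i$, $\min F_i \leq n/4$ and $\max F_i \geq 3n/4$ jointly hold with high probability (the tail bound $\P(\min F_i > x) = (k/(k+1))^{\lfloor x \rfloor}$ is exponentially small), which already forces a descent at every boundary. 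You instead condition on the block sizes, observe that $F_i$ is a uniform size-$|F_i|$ subset of $F_i \cup F_{i+1}$, and bound the failure probability at each boundary by $\binom{|F_i|+|F_{i+1}|}{|F_i|}^{-1}$, exponentially small once all sizes are $\Theta(n)$. Both are elementary exponential estimates; the paper's is a touch shorter since it avoids the conditioning step, while yours yields an exact conditional probability at each boundary.
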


\begin{proof}[Proof of Lemma \ref{lem:permufix}]
Let us first prove (i). For all $1 \leq i \leq k+1$, and any $x \in (1,n)$, we have
\begin{align*}
\P \left( \min \{ j, j \in F_i \} > x  \right) = \left(\frac{k}{k+1}\right)^{\lfloor x \rfloor},\quad \P \left( \max \{ j, j \in F_i \} < x  \right) = \left(\frac{k}{k+1}\right)^{\lfloor n+1-x \rfloor}.
\end{align*}
In particular, we have with high probability, jointly for all $ 1\leq i \leq k+1$:
\begin{align*}
\min F_i \leq n/4,\quad \max F_i \geq 3n/4. 
\end{align*}
Furthermore, if this event occurs, it is clear by construction that $D_n^k$ has exactly $k$ descents. This shows (i). 
In order to prove (ii), observe that, for any permutation $\tau_n$ with $k$ descents, we have
\begin{align*}
\P\left(D_n^k=\tau_n \right) = (k+1)^{-n},
\end{align*}
as this event is uniquely determined by the values of the random variables $A_1, \ldots, A_n$. Hence, conditionally on $E_n^k$, the permutation $D_n^k$ is uniform in $\kD_n^k$.
\end{proof}

As a consequence, we get the following (well-known) corollary. Let $N(k,n)$ be the number of permutations in $\kS_n$ with $k$ descents, which is also known as an Eulerian number. 

\begin{cor}
\label{cor:fixeddescents}
Let $k \geq 1$ be a fixed integer. Then
\begin{itemize}
\item[(i)]$N(k,n) \leq (k+1)^n$.
\item[(ii)] $N(k,n) = (k+1)^n(1+o(1))$ as $n \to \infty$.
\end{itemize}
\end{cor}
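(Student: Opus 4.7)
My plan is to extract both parts of the corollary directly from Lemma \ref{lem:permufix}, by computing $\P(E_n^k)$ explicitly and then reading off each part from an upper bound and from the asymptotic convergence, respectively.

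The key observation is that, for any fixed permutation $\tau \in \kD_n^k$, there is exactly one sequence $(A_1, \ldots, A_n) \in \llbracket 1, k+1 \rrbracket^n$ that produces $\tau$ via the construction of $D_n^k$. Indeed, a permutation with exactly $k$ descents decomposes uniquely into $k+1$ nonempty ascending runs $L_1, \ldots, L_{k+1}$, so the only possible choice is to set $A_j$ equal to the index of the run containing $j$. Since the $A_j$ are i.i.d.\ uniform on $\llbracket 1, k+1 \rrbracket$, this yields
\begin{align*}
\P(D_n^k = \tau) = (k+1)^{-n} \qquad \text{for every } \tau \in \kD_n^k,
\end{align*}
as already recorded in the proof of Lemma \ref{lem:permufix}(ii).

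Summing this identity over the $N(k,n)$ elements of $\kD_n^k$, which is precisely the event $E_n^k$, I obtain
\begin{align*}
\P(E_n^k) = \frac{N(k,n)}{(k+1)^n}.
\end{align*}
Part (i) then follows at once from the trivial bound $\P(E_n^k) \leq 1$, and part (ii) follows from the convergence $\P(E_n^k) \to 1$ supplied by Lemma \ref{lem:permufix}(i).

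Since the argument is a direct bookkeeping consequence of the preceding lemma, I do not foresee any genuine obstacle here; the substantive content of the corollary has essentially been proved already in Lemma \ref{lem:permufix}, and the only new ingredient is the identification of $N(k,n)/(k+1)^n$ with $\P(E_n^k)$.
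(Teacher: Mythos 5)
Your proof is correct and follows essentially the same route as the paper, merely phrased probabilistically (reading $N(k,n)/(k+1)^n$ as $\P(E_n^k)$) rather than set-theoretically (counting preimages under the map $f_k$). If anything, your version is slightly more careful: the paper asserts that $f_k$ is injective on all of $\{1,\ldots,k+1\}^n$, which is not literally true (e.g.\ the constant tuples $(1,\ldots,1)$ and $(2,\ldots,2)$ both map to the identity), whereas you correctly observe that uniqueness of the preimage holds precisely for $\tau \in \kD_n^k$, which is all that is needed.
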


\begin{proof}[Proof of Corollary \ref{cor:fixeddescents}]
In order to prove (i), consider the map
\begin{align*}
f_k: \{1, \ldots, k+1 \}^n \to \kS_n,
\end{align*}
which associates to an $n$-tuple $(a_1, \ldots, a_n)$ the permutation $(D_n^k \,:\, A_i=a_i, 1 \leq i \leq n)$. The map $f_k$ is clearly injective, and $\kD_n^k \subseteq \operatorname{Im}(f_k)$, so that $N(k,n)=|\kD_n^k| \leq (k+1)^n$. The approximation (ii) comes directly from Lemma \ref{lem:permufix}.
\end{proof}

We can now prove Proposition \ref{prop:concentration}.

\begin{proof}[Proof of Proposition \ref{prop:concentration}]
For any $k \geq 0$, let $w_{k,n} := \sum_{\tau \in \kD_n^k} q_n^{d(\tau)}$ be the total weight of all permutations of $\llbracket 1, n \rrbracket$ with $k$ descents. Then, $w_{n,k}=(k+1)^n q_n^k (1+o(1))$ by Corollary \ref{cor:fixeddescents} (ii), which can be rewritten as
\begin{align*}
w_{n,k} = \exp \left[ n \left( \log (k+1) - k/c +o(1) \right) \right].
\end{align*}
On the other hand, for all $k$, $w_{n,k} \leq (k+1)^n q_n^k$ by Corollary \ref{cor:fixeddescents} (i). Define $K_c$ to be the set of all nonnegative integers $k$ for which $\log (k+1) - k/c$ attains its maximum. We immediately get that 
\begin{align*}
\sum_{0 \leq k \leq n-1} w_{n,k} = \sum_{k \in K_c} w_{n,k} (1+o(1)).
\end{align*}
Proposition \ref{prop:concentration} follows, since $K_c \subseteq \{ \lfloor c \rfloor - 1, \lceil c \rceil - 1\}$. 
\end{proof}

Observe finally that, since $w_{n,k} \leq (k+1)^n q_n^k$, with high probability $S_n^{(q_n)}$ has no descent for $\log(1/q_n) \gg n$ (even under the weaker condition that $2^n q_n \to 0$). Thus with high probability $S_n^{(q_n)}$ is the identity permutation in this case. Hence, the degenerate case that we consider, where $\log(1/q_n) = n/c+o(n)$ for some constant $c > 0$, is the smallest threshold to see non-trivial permutations appear asymptotically.


\subsection{Local limit}

Thanks to Proposition \ref{prop:concentration}, to study the local limit of $S_n^{(q_n)}$ when $\log(1/q_n) = n/c+o(n)$, we only need to understand the local limit of a permutation conditioned on having a fixed number $k$ of descents.

\begin{proof}[Proof of Theorem \ref{thm:maindegenerate}]
Proving Theorem \ref{thm:maindegenerate} boils down to investigating the behaviour of the first $r$ elements of the permutation $D_n^k$ for fixed $k$. To this end, observe that they are simply the first $r$ elements $i_1 < \ldots < i_r$ of $\llbracket 1, n \rrbracket$ satisfying $A_{i_1} = \ldots = A_{i_r} = 1$ (we know that with high probability, uniformly for $k$ in a compact subset of $\R$, $|F_1| \geq \sqrt{n} \geq r$, so that the first $r$ elements of $S_n^{(q_n)}$ belong to $F_1$). Theorem \ref{thm:maindegenerate} then follows immediately from the definition of the $A_i$'s.
\end{proof}

\section{Generating functions}
\label{sec:genfunc}

In order to study the nondegenerate and constant phases, we use the machinery of generating functions. We first obtain a differential equation satisfied by the generating function of descent-biased permutations, then use it to obtain asymptotics of the moments of the first element. In the next section, we use it to deduce Theorems \ref{thm:mainnondegenerate} and \ref{thm:mainconstant} from these asymptotics.

\subsection{Generating function of descent-biased permutations}\label{sec:gf_descent-biased}

For any $k \in \llbracket 1, n \rrbracket$, let $P_n^k(q)$ be the number of permutations of $\llbracket 1, n \rrbracket$, weighted by $q^{d(\sigma)}$, whose first element is $k$. We obtain the following relation by distinguishing the cases that the second element is less than $k$ and that the second element is greater than $k$, respectively:
\begin{align*}
P_n^k(q) &= q \sum_{j=1}^{k-1} P_{n-1}^j(q) + \sum_{j=k}^{n-1} P_{n-1}^j(q)\\
&= q \sum_{j=1}^{n-1} P_{n-1}^j(q) + (1-q) \sum_{j=k}^{n-1} P_{n-1}^j(q).
\end{align*}
Writing $R_n(q,y) = \sum_{k = 1}^n P_n^k(q) y^k$, we get
\begin{align*}
R_n(q,y) &= q \sum_{k = 1}^{n} y^k \sum_{j =1}^{n-1} P_{n-1}^j(q) +(1-q)  \sum_{k = 1}^{n} y^k \sum_{j=k}^{n-1} P_{n-1}^j(q)\\
&= qy \frac{1-y^n}{1-y} R_{n-1}(q,1) + (1-q) \sum_{j=1}^{n-1} P_{n-1}^j(q) \sum_{k=1}^j y^k\\
&= qy \frac{1-y^n}{1-y} R_{n-1}(q,1) + (1-q)y \sum_{j=1}^{n-1} P_{n-1}^j(q) \frac{1-y^j}{1-y}\\
&= qy \frac{1-y^n}{1-y} R_{n-1}(q,1) + \frac{(1-q)y}{1-y} \sum_{j=1}^{n-1} P_{n-1}^j(q) - \frac{(1-q)y}{1-y} \sum_{j=1}^{n-1} P_{n-1}^j(q) y^j \\
&= \frac{y}{1-y} R_{n-1}(q,1) - \frac{qy^{n+1}}{1-y} R_{n-1}(q,1) -  \frac{(1-q)y}{1-y} R_{n-1}(q,y).
\end{align*}
This recursion still holds for $n=1$ if we set $R_0(q,y) = y$ (i.e., interpret the empty permutation as starting with $1$). Now let $S(x,y) := \sum_{n \geq 0} R_n(q,y) \frac{x^n}{n!}$ be the bivariate (exponential) generating function. The recursion for $R_n$ translates to the differential equation
\begin{align*}
\frac{\partial}{\partial x} S(x,y) = \frac{y}{1-y} \big( S(x,1) - qy S(xy,1) - (1-q) S(x,y) \big).
\end{align*}

The function $S(x,1)$ is the well-known generating function of the Eulerian polynomials (see  \cite[p.~34--45]{Knu73}):
\begin{equation}
\label{eq:euler}
S(x,1) = \frac{1-q}{e^{(q-1)x}-q}.
\end{equation}

We finally get
\begin{equation}
\label{eq:equadiff}
\frac{\partial}{\partial x} S(x,y) = \frac{(1-q) y}{1-y} \left( \frac{1}{e^{(q-1)x}-q} - \frac{qy}{e^{(q-1)xy}-q} - S(x,y) \right).
\end{equation}

It is possible to derive an integral representation for $S(x,y)$ from this differential equation, but for our purposes it is actually more convenient to work directly with the differential equation in the following.

\subsection{Moments}

We will be particularly interested in $M_r(x) = (\frac{\partial}{\partial y})^r S(x,y)|_{y=1}$, which will yield us the $r$-th factorial moment of the distribution of the first element $S_n^{(q)}(1)$ of a permutation. From~\eqref{eq:equadiff}, we obtain a recursion for these functions by first multiplying by $1-y$, then differentiating $r$ times with respect to $y$ and setting $y=1$: letting
$$f_r(x) = \Big( \frac{\partial}{\partial y} \Big)^r \Big( \frac{1}{e^{(q-1)x}-q} - \frac{qy}{e^{(q-1)xy}-q} \Big) \Big|_{y=1},$$
we have
$$r M_{r-1}'(x) = (1-q) \big(M_r(x) + r M_{r-1}(x) - f_r(x) - r f_{r-1}(x)\big).$$
This can be rewritten as follows:
\begin{equation}\label{eq:moment_recursion}
M_r(x) = \frac{r}{1-q} M_{r-1}'(x) - r M_{r-1}(x) + f_r(x) + rf_{r-1}(x).
\end{equation}
We will inductively derive the asymptotic behaviour of the coefficients of $M_r(x)$ from this recursion. For fixed $q$, this would be a fairly straightforward exercise in singularity analysis (which we will use later in Section~\ref{sec:genfun_trees} in the analysis of descent-biased trees), but to obtain results that remain valid as $q \to 0$, we need to apply a somewhat different approach.

\begin{theorem}
\label{thm:momentsbehaviour}
Suppose that $0 < q = q_n < 1$, where $q_n$ is either constant or $q_n \to 0$ with $\log(1/q_n) =o(n)$.
For every fixed positive integer $r$, we have
$$[x^n] M_r(x) \sim \mu_r(q_n) n^r [x^n] f_0(x),$$
where
$$\mu_r(q) =  \frac{\log(1/q)}{1-q}  \int_0^1 x^rq^x\,dx$$
is the $r$-th moment of a random variable with density $\frac{\log(1/q)}{1-q} q^x \mathds{1}_{[0,1]}(x)$.
\end{theorem}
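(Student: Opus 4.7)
The plan is to induct on $r$, the base case $r=0$ being immediate since $f_0(x) = M_0(x) = S(x,1)$ and a direct integration gives $\mu_0(q) = 1$. For the inductive step, I would take $[x^n]$ of both sides of \eqref{eq:moment_recursion}, giving
\begin{equation*}
[x^n] M_r(x) = \tfrac{r(n+1)}{1-q}\,[x^{n+1}]M_{r-1}(x) - r[x^n]M_{r-1}(x) + [x^n]f_r(x) + r[x^n]f_{r-1}(x).
\end{equation*}
By the induction hypothesis, the second term and $r[x^n]f_{r-1}(x)$ are both of order $n^{r-1}[x^n]M_0(x)$ and hence negligible. The first term will carry the dominant contribution, but $[x^n]f_r(x)$ must also be computed precisely, because it turns out to be of the same order $n^r[x^n]M_0(x)$ and contributes a piece that must cancel against a part of the first term.

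For the forcing term, I would set $h(u):=q/(e^{(q-1)u}-q) = \frac{q}{1-q}M_0(u)$ and apply the Leibniz rule to $y\cdot h(xy)$, noting that only the two terms in which $0$ or $1$ derivatives fall on $y$ survive; this yields $f_r(x) = -rx^{r-1}h^{(r-1)}(x) - x^r h^{(r)}(x)$ for $r\geq 1$. Using the elementary shift identity $[x^n]\,x^s h^{(s)}(x) = \tfrac{n!}{(n-s)!}\,[x^n]h(x)$, one obtains $[x^n]f_r(x) = -\tfrac{q}{1-q}\bigl(\tfrac{n!}{(n-r)!} + r\tfrac{n!}{(n-r+1)!}\bigr)[x^n]M_0(x) = -\tfrac{q}{1-q}\,n^r[x^n]M_0(x)\,(1+O(n^{-1}))$. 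Combining this with the induction hypothesis $[x^{n+1}]M_{r-1}(x)\sim\mu_{r-1}(q)(n+1)^{r-1}[x^{n+1}]M_0(x)$ and the ratio asymptotic $[x^{n+1}]M_0(x)\sim \rho_q^{-1}[x^n]M_0(x)$ with $\rho_q:=\log(1/q)/(1-q)$, the leading coefficient of $n^r[x^n]M_0(x)$ on the right-hand side becomes $\tfrac{r\mu_{r-1}(q)}{\log(1/q)} - \tfrac{q}{1-q}$. An integration by parts in $\int_0^1 x^r q^x\,dx$ with $u=x^r$ and $dv=q^x\,dx$ yields the identity $\tfrac{r\mu_{r-1}(q)}{\log(1/q)} = \mu_r(q)+\tfrac{q}{1-q}$, so the two $\tfrac{q}{1-q}$ terms cancel and we recover exactly $\mu_r(q_n)n^r[x^n]M_0(x)$, closing the induction.

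The main obstacle I expect is the uniform validity of the ratio estimate $[x^{n+1}]M_0(x)/[x^n]M_0(x)\sim 1/\rho_q$ in the regime $q=q_n\to 0$ with $\log(1/q_n)=o(n)$. For constant $q$, this follows from singularity analysis at the dominant simple pole $\rho_q$ of $M_0$; but as $q_n\to 0$ the neighbouring poles $\rho_q + 2\pi i k/(1-q)$ approach $\rho_q$ in modulus, so the standard error factor $(\rho_q/\rho_q')^n$ need not tend to zero (for instance when $\log(1/q_n)$ grows like a power of $n$). To bypass this, I would use the explicit representation $n![x^n]M_0(x) = \tfrac{(1-q)^{n+1}}{q}\sum_{k\geq 1} k^n q^k$, obtained by expanding $M_0$ as a geometric series, together with a Laplace-type analysis: the summand $k^n q^k$ concentrates sharply around $k^\ast = n/\log(1/q)$, so that $\sum_k k^{n+1}q^k/\sum_k k^n q^k = k^\ast(1+o(1))$, which yields $[x^{n+1}]M_0(x)/[x^n]M_0(x) = \tfrac{1-q}{\log(1/q)}(1+o(1)) = \rho_q^{-1}(1+o(1))$ uniformly in the required range of $q_n$.
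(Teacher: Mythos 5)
Your proposal is correct and follows essentially the same route as the paper: the same induction on $r$ via the recursion~\eqref{eq:moment_recursion}, the same reduction of $[x^n]f_r(x)$ to $[x^n]f_0(x)$ (your Leibniz computation gives $\frac{(n+1)!}{(n-r+1)!}=(n+1)^{\underline r}$, identical to Lemma~\ref{lem:fr}), and crucially the same treatment of the ratio $[x^{n+1}]f_0/[x^n]f_0$ via the explicit expansion $[x^n]f_0(x)=\frac{(1-q)^{n+1}}{qn!}\sum_{k\geq 1}q^kk^n$ followed by a discrete Laplace concentration around $k^\ast=n/\log(1/q)$ (Lemma~\ref{lem:f0}). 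You correctly identified that naive singularity analysis is insufficient in the $q_n\to0$ regime and supplied the right fix, which is exactly the paper's key technical step.
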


\begin{remark}
It is easy to show, using integration by parts, that for any $q \in (0,1)$, the moment sequence $(\mu_r(q))_{r \geq 0}$ satisfies the following recursion:

\begin{equation}\label{eq:momentrec}
\mu_0(q)=1, \qquad\mu_r(q) = \frac{r \mu_{r-1}(q)}{\log(1/q)} - \frac{q}{1-q}.
\end{equation}
\end{remark}

As a corollary, we obtain the limiting distribution of $S_n^{(q)}(1)$ for fixed $q$.

\begin{cor}
\label{cor:expectation}
Suppose that $0 < q = q_n < 1$, where $q_n$ is either constant or $q_n \to 0$ with $\log(1/q_n) =o(n)$.
For every fixed positive integer $r$, we have, as $n \to \infty$:
$$\E \left( \left( \frac{S_n^{(q_n)}(1)}{n} \right)^r \right) \sim \mu_r(q_n).$$
In particular, for fixed $q_n = q$, $\frac{S_n^{(q)}(1)}{n}$ converges in distribution to a random variable whose moments are $\mu_r(q)$, i.e., a random variable with density $\frac{\log(1/q)}{1-q} q^x \mathds{1}_{[0,1]}(x)$.
\end{cor}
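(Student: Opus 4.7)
The plan is to translate the coefficient asymptotics of Theorem~\ref{thm:momentsbehaviour} into moment asymptotics for $S_n^{(q_n)}(1)$. First, by definition of $R_n$ and $S$ we have
$$n!\,[x^n]M_r(x) \;=\; \Big(\tfrac{\partial}{\partial y}\Big)^{\!r} R_n(q,y)\Big|_{y=1} \;=\; \sum_{k=1}^n (k)_r\, P_n^k(q),$$
where $(k)_r = k(k-1)\cdots(k-r+1)$ is the falling factorial. Since $Z_{n,q} = n!\,[x^n]M_0(x)$, this gives the clean identity
$$\E\big[(S_n^{(q_n)}(1))_r\big] \;=\; \frac{[x^n]M_r(x)}{[x^n]M_0(x)}.$$
A direct check from the definition of $f_r$ shows that $f_0(x) = \frac{1-q}{e^{(q-1)x}-q} = S(x,1) = M_0(x)$, so Theorem~\ref{thm:momentsbehaviour} immediately yields $\E[(S_n^{(q_n)}(1))_r] \sim \mu_r(q_n)\, n^r$.

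To pass from factorial to ordinary moments, I would expand $x^r$ as a linear combination of falling factorials $(x)_j$ (with coefficients given by Stirling numbers of the second kind, which I denote $a_{r,j}$ to avoid a clash with the generating function $S$). This gives
$$\E\big[S_n^{(q_n)}(1)^r\big] \;=\; \E\big[(S_n^{(q_n)}(1))_r\big] + \sum_{j=0}^{r-1} a_{r,j}\,\E\big[(S_n^{(q_n)}(1))_j\big],$$
so that after dividing by $n^r$ the main term produces $\mu_r(q_n)(1+o(1))$, while each error term is of order $\mu_j(q_n)\, n^{j-r}$. Establishing the first assertion then reduces to showing $\mu_j(q_n)\, n^{j-r} = o(\mu_r(q_n))$ for every $j<r$.

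The main obstacle --- and the step where the hypothesis $\log(1/q_n) = o(n)$ is essential --- is controlling the ratio $\mu_j(q_n)/\mu_r(q_n)$ when $q_n \to 0$. Using the recursion \eqref{eq:momentrec}, an induction on $r$ gives $\mu_r(q_n) \sim r!/(\log(1/q_n))^r$ as $q_n \to 0$, since the correction term $q/(1-q)$ is exponentially smaller than $r\mu_{r-1}(q)/\log(1/q)$; for constant $q_n = q$ the ratio $\mu_j/\mu_r$ is trivially bounded. Hence in either regime $\mu_j(q_n)/\mu_r(q_n) = O((\log(1/q_n))^{r-j})$, and
$$\mu_j(q_n)\, n^{j-r} \;=\; O\!\left(\mu_r(q_n)\Bigl(\tfrac{\log(1/q_n)}{n}\Bigr)^{r-j}\right) \;=\; o(\mu_r(q_n)),$$
which yields $\E[(S_n^{(q_n)}(1)/n)^r] \sim \mu_r(q_n)$ as claimed.

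Finally, for the distributional convergence at fixed $q \in (0,1]$, I would invoke the method of moments: the random variable $S_n^{(q)}(1)/n$ takes values in $[0,1]$, and we have just shown that each of its moments converges to $\mu_r(q)$. Since a probability measure on a bounded interval is uniquely determined by its moments, and $\{\mu_r(q)\}_{r\geq 0}$ are by construction the moments of the law with density $\frac{\log(1/q)}{1-q}\, q^x \mathds{1}_{[0,1]}(x)$ (with the convention $\tfrac{\log(1/q)}{1-q}=1$ at $q=1$, for which the trivial uniform case can be treated separately), the convergence in distribution of $S_n^{(q)}(1)/n$ to this law follows.
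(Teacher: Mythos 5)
Your proposal is correct and follows essentially the same route as the paper: identify the factorial moments of $S_n^{(q_n)}(1)$ with $\frac{[x^n]M_r(x)}{[x^n]M_0(x)}$, invoke Theorem~\ref{thm:momentsbehaviour}, show the lower-order factorial moments are negligible after dividing by $n^r$ using the recursion~\eqref{eq:momentrec} together with $\log(1/q_n)=o(n)$, and conclude with the method of moments on $[0,1]$. The only cosmetic difference is that you write out the Stirling-number expansion and the $q\to 0$ asymptotic $\mu_r(q)\sim r!/(\log(1/q))^r$ explicitly, whereas the paper simply records the consecutive-ratio bound $\mu_{r-1}(q_n)n^{r-1}/(\mu_r(q_n)n^r)=O(n^{-1}\max\{\log(1/q_n),1\})=o(1)$.
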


\begin{proof}
Note that, since $f_0(x)=S(x,1)$, $\frac{[x^n] M_r(x)}{[x^n] f_0(x)}$ is the $r$-th factorial moment of $S_n^{(q_n)}(1)$, which is asymptotically equivalent to $\mu_r(q_n) n^r$ by Theorem \ref{thm:momentsbehaviour}. It is easy to show from the recursion~\eqref{eq:momentrec} that 
$$\frac{\mu_{r-1}(q_n)n^{r-1}}{\mu_r(q_n) n^r}  =O(n^{-1}\max \{\log(1/q_n),1\}) = o(1).$$
Thus the $r$-th factorial moment is also asymptotically equivalent to the $r$-th moment of $S_n^{(q_n)}(1)$, and the first statement follows. Since the distribution whose density is $\frac{\log(1/q)}{1-q} q^x \mathds{1}_{[0,1]}(x)$ is uniquely determined by its moments as it has bounded support, convergence in distribution follows immediately as well (see e.g.~\cite[Theorem C.2]{FS09}).
\end{proof}

\subsection{Proof of Theorem \ref{thm:momentsbehaviour}}

We first consider the coefficients of $[x^n] f_r(x)$.

\begin{lemma}\label{lem:fr}
For $r \geq 1$, we have (with $a^{\underline{r}} = a(a-1)\cdots(a-r+1)$ denoting the falling factorial)
$$[x^n] f_r(x) = (n+1)^{\underline{r}} [x^n] \frac{q}{q-e^{(q-1)x}} = -\frac{q}{1-q} (n+1)^{\underline{r}} [x^n] f_0(x).$$
\end{lemma}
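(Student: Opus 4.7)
The plan is to compute $f_r(x)$ explicitly for $r \geq 1$ and then extract coefficients using the standard relation between coefficients of a power series and its derivatives.

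First, I would observe that the term $\frac{1}{e^{(q-1)x}-q}$ in the definition of $f_r$ is independent of $y$, so it is killed by $\partial_y$ for $r \geq 1$. Writing $g(x) = \frac{q}{e^{(q-1)x}-q}$, the remaining term is $\frac{qy}{e^{(q-1)xy}-q} = y\,g(xy)$. By Leibniz's rule, using that $\partial_y^k(y)$ vanishes for $k \geq 2$, I get
\begin{equation*}
\Big(\frac{\partial}{\partial y}\Big)^r \big(y\, g(xy)\big) = y\, x^r g^{(r)}(xy) + r\, x^{r-1} g^{(r-1)}(xy),
\end{equation*}
and evaluating at $y=1$ yields
\begin{equation*}
f_r(x) = -\,x^r g^{(r)}(x) \;-\; r\, x^{r-1} g^{(r-1)}(x).
\end{equation*}

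Next, for any formal power series $g(x) = \sum_m a_m x^m$, the standard identity $[x^{n-k}]\, g^{(k)}(x) = n^{\underline{k}}\, a_n$ gives $[x^n]\, x^k g^{(k)}(x) = n^{\underline{k}}\,[x^n]\,g(x)$. Applied to the two summands above, this produces
\begin{equation*}
[x^n]\, f_r(x) = -\bigl(n^{\underline{r}} + r\,n^{\underline{r-1}}\bigr)\,[x^n]\, g(x).
\end{equation*}
The small combinatorial identity $n^{\underline{r}} + r\,n^{\underline{r-1}} = (n+1-r)\,n^{\underline{r-1}} + r\,n^{\underline{r-1}} = (n+1)\,n^{\underline{r-1}} = (n+1)^{\underline{r}}$ collapses the bracket into a single falling factorial, yielding the first claimed equality
\begin{equation*}
[x^n]\,f_r(x) = -(n+1)^{\underline{r}}\,[x^n]\,g(x) = (n+1)^{\underline{r}}\,[x^n]\,\frac{q}{q-e^{(q-1)x}}.
\end{equation*}

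Finally, from \eqref{eq:euler} we have $f_0(x) = S(x,1) = \frac{1-q}{e^{(q-1)x}-q}$, so
$
g(x) = \tfrac{q}{1-q}\, f_0(x),
$
which converts the first form of the answer into the second. There is no real obstacle: the argument is essentially Leibniz's rule plus one falling-factorial identity, and the only care needed is to make sure the two surviving terms from differentiating the product $y\,g(xy)$ are combined correctly.
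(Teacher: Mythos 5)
Your argument is correct and essentially matches the paper's: both reduce to differentiating $y\,g(xy)$ $r$ times with respect to $y$ and then extracting $[x^n]$. The only cosmetic difference is that the paper first expands $g(x)=\sum a_n x^n$ so that the factor $y^{n+1}$ makes $(n+1)^{\underline r}$ appear immediately, whereas you use Leibniz's rule to get the two terms $x^r g^{(r)}(x)+r\,x^{r-1}g^{(r-1)}(x)$ and then recombine them via $n^{\underline r}+r\,n^{\underline{r-1}}=(n+1)^{\underline r}$ — both are fine, the paper's route just sidesteps that last small identity.
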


\begin{proof}
Let $\sum_{n \geq 0} a_nx^n$ be the series expansion of $f_0(x) = \frac{1-q}{e^{(q-1)x}-q}$. Replacing $x$ by $xy$, we obtain
$$\frac{q}{e^{(q-1)xy}-q} = \frac{q}{1-q} \sum_{n \geq 0} a_nx^n y^n$$
and thus
$$ - \frac{qy}{e^{(q-1)xy}-q} = - \frac{q}{1-q} \sum_{n \geq 0} a_nx^ny^{n+1}.$$
The result follows by differentiating $r$ times with respect to $y$ and plugging in $y=1$.
\end{proof}

Next, we need a technical lemma on the coefficients of the generating function $f_0(x) = \frac{1-q}{e^{(q-1)x}-q}$.

\begin{lemma}\label{lem:f0}
Suppose that $0 < q = q_n < 1$, where $q_n$ is either constant or $q_n \to 0$ with $\log(1/q_n) =o(n)$. Then we have
$$\frac{[x^{n+1}] f_0(x)}{[x^n] f_0(x)} \sim \frac{1-q_n}{\log(1/q_n)}$$
as $n \to \infty$. The statement also holds for $q_n=1$ if the right side is interpreted as $1$.
\end{lemma}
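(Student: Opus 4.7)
The plan is to derive an explicit formula for $[x^n] f_0(x)$ via a geometric expansion of $f_0$, reduce the lemma to an asymptotic about the mean of an integer-valued Gibbs distribution, and prove that asymptotic by a saddle-point style estimate that is uniform in $q$.

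First, since $f_0(x) = (1-q)/(e^{(q-1)x}-q) = (1-q)e^{(1-q)x}/(1 - qe^{(1-q)x})$, expanding the geometric series yields, for $x$ inside the disc of convergence $|x| < \log(1/q)/(1-q)$,
\[
f_0(x) \;=\; (1-q)\sum_{j \geq 1} q^{j-1} e^{j(1-q)x}.
\]
Extracting the coefficient of $x^n$ and setting $Z_n(q) := \sum_{j \geq 1} j^n q^j$ gives
\[
[x^n]f_0(x) \;=\; \frac{(1-q)^{n+1}}{n!}\sum_{j \geq 1} j^n q^{j-1} \;=\; \frac{(1-q)^{n+1}}{q\,n!}\,Z_n(q),
\]
from which
\[
\frac{[x^{n+1}]f_0(x)}{[x^n]f_0(x)} \;=\; \frac{1-q}{n+1}\cdot\frac{Z_{n+1}(q)}{Z_n(q)}.
\]
Thus the lemma is equivalent to the asymptotic $Z_{n+1}(q_n)/Z_n(q_n) \sim (n+1)/L_n$, where $L_n := \log(1/q_n)$.

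The key observation is that $Z_{n+1}(q)/Z_n(q) = \E[J]$ for the integer-valued random variable $J$ with $\P(J = j) \propto j^n e^{-Lj}$. The log-density $j \mapsto n\log j - Lj$ is strictly concave, with continuous maximiser $j^{*} = n/L$ and curvature $-L^2/n$ there, suggesting a typical width $\sigma = \sqrt{n}/L$. Let $r = \lfloor n/L + \tfrac12\rfloor$ denote the nearest integer to $j^{*}$. A second-order Taylor expansion of the log-density around $r$, together with the elementary bound $|n/r - L| = O(L^2/n)$, produces a Gaussian-type estimate
\[
\frac{\P(J = r+k)}{\P(J = r)} \;\leq\; \exp\!\Bigl(-c\,\frac{L^2 k^2}{n}\Bigr)
\]
for some absolute $c > 0$, valid when $|k|$ is a small fraction of $r$; the complementary tail is controlled by the log-concavity of $j \mapsto j^n e^{-Lj}$. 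Summing $|k|$ against this bound and dividing by the total mass then gives, uniformly in $q$,
\[
\E|J - r| \;=\; O\!\bigl(\sqrt{n}/L + 1\bigr).
\]
This single estimate unifies the two natural regimes: when $L = O(\sqrt n)$ the spread $\sigma$ dominates and $J$ is ``fluid'' around $r$, whereas when $L \gg \sqrt n$ the distribution concentrates on $\{r\}$ together with its immediate neighbours.

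Since $|r - (n+1)/L| \leq \tfrac12 + 1/L = O(1)$, it follows that $|\E[J] - (n+1)/L| = O(\sqrt{n}/L + 1)$. Comparing with the target $(n+1)/L$ gives a relative error $(\sqrt n + L)/(n+1)$, which tends to $0$ under the standing hypothesis $L_n = o(n)$ (and is trivially $o(1)$ for constant $q$), establishing the claim. The case $q_n = 1$ is handled separately: $f_0(x)$ becomes $1/(1-x)$ in the limit, so both coefficients equal $1$ and the ratio is trivially $1$, matching the stated convention. The main obstacle I anticipate is producing the Gaussian-type bound on $\P(J = r+k)/\P(J = r)$ uniformly in $q$: a direct singularity analysis of $f_0(x)$ works easily for fixed $q$, but fails as $q_n \to 0$ because the sub-dominant poles of $f_0$ at $\log(1/q)/(1-q) + 2\pi ik/(q-1)$ accumulate at the principal pole, so the argument has to be run directly on $Z_n(q)$ as above.
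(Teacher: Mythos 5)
Your proposal follows essentially the same route as the paper's proof: both expand $f_0(x) = (1-q)\sum_{j\geq 1} q^{j-1}e^{j(1-q)x}$ and reduce the lemma to an asymptotic for $\sum_{j\geq 1} j^n q^j$, concentrated around $j^* = n/\log(1/q)$ via a discrete Laplace/saddle-point argument. You reframe the ratio as $\frac{1-q}{n+1}\,\E[J]$ for the Gibbs variable $J$ and bound $\E|J-r|$; the paper truncates the sum at $|k - n/L| \leq (n/L)^{2/3}$ and compares corresponding terms directly. These are presentational variants of the same calculation, and your observation at the end --- that straightforward singularity analysis of $f_0$ fails as $q_n\to 0$ because the subdominant poles coalesce with the principal one, forcing one to work directly with the series --- is exactly the motivation stated in the paper.

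One small inaccuracy worth noting: the Gaussian estimate $\P(J=r+k)/\P(J=r) \leq \exp(-cL^2k^2/n)$ with $r$ the nearest integer to $n/L$ does not hold for all small $|k|$. The Taylor expansion at $r$ gives $g(r+k)-g(r) = (n/r-L)k - \tfrac{n}{2\xi^2}k^2$ with $|n/r-L| = O(L^2/n)$; for $|k|=1$ the linear term and the cubic asymmetry (of order $L^3/n^2$, which is unbounded when $L \gg n^{2/3}$) can make the ratio exceed $1$, violating the stated bound. The bound is correct for $|k|$ bounded below by a suitable absolute constant $K_0$. The simplest repair is to centre the expansion at the integer mode $m^*$ of $J$ rather than at $r$; then $\P(J=m^*+k) \leq \P(J=m^*)$ trivially handles $|k| < K_0$, your Gaussian bound works for $K_0 \leq |k| \leq \epsilon m^*$, and log-concavity handles the tails. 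Since $|m^* - r| = O(1)$, the conclusion $\E|J-r| = O(\sqrt{n}/L + 1)$ and hence the lemma are unaffected.
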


\begin{proof} Throughout the proof, we only write $q$ instead of $q_n$ for simplicity. 
For $q = 1$, we have
$$f_0(x) = \lim_{q \to 1} \frac{1-q}{e^{(q-1)x}-q} = \frac{1}{1-x},$$
and the statement is trivial. So assume that $q < 1$. 
Now we have
\begin{align*}
[x^n] f_0(x) &= [x^n] \frac{(1-q)e^{(1-q)x}}{1-q e^{(1-q)x}} = (1-q) [x^n] \sum_{k \geq 1} q^{k-1} e^{k(1-q)x} \\
&= (1-q) \sum_{k \geq 1} q^{k-1} \frac{k^n (1-q)^n}{n!} = \frac{(1-q)^{n+1}}{q n!} \sum_{k \geq 1} q^k k^n.
\end{align*}
Let us use the abbreviation $Q = \frac{n}{\log(1/q)}$. By our assumptions on $q$, we know that $Q \to \infty$ as $n \to \infty$.
We can apply the discrete Laplace method (see \cite[Appendix B.6]{FS09}) to the sum: set $k = Q(1+t) = \frac{n}{\log(1/q)} (1+t)$, and observe that
$$q^k k^n = e^{-n(1+t)} Q^n (1+t)^n = \frac{Q^n}{e^n} \big( (1+t) e^{-t} \big)^n.$$
The function $t \mapsto (1+t)e^{-t}$ is increasing for $-1 \leq t \leq 0$ and decreasing for $t \geq 0$, and $(1+t)e^{-t} = e^{-t^2/2 + O(t^3)}$ around $0$.
We can use standard estimates to show that only those terms where $t$ is close to $0$ are asymptotically relevant. Specifically, set $T = Q^{-1/3}$. We show that all terms with $|t| \geq T$ are negligible. First, we note that for the value of $k$ that is closest to $Q$ (call it $k_0:=Q(1+t_0)$), we must have $|k_0 - Q| \leq \frac12$, thus $|t_0| \leq \frac{1}{2Q}$. For this specific choice, we have
$$q^{k_0} k_0^n \geq \frac{Q^n}{e^n} e^{-nQ^{-2}/8 + O(nQ^{-3})}.$$
The sum over all integers $k$ for which the corresponding $t$ satisfies $T \leq |t| \leq 1$ (there are $O(Q)$ such integers) can be bounded as follows:
$$\sum_{\substack{k = Q(1+t) \\ T \leq |t| \leq 1}} q^k k^n = O \Big( Q \cdot \frac{Q^n}{e^n} e^{-nT^2/2 + O(nT^3)} \Big).$$
By our choice of $T$, we have $n T^2  = n Q^{-2/3} \gg nQ^{-2}$. Moreover, $nT^2 = n Q^{-2/3} \gg \log Q$, since $Q = O(n)$. Thus this entire sum is negligible compared to the single term $q^{k_0}k_0^n$. Finally, for the terms where $|t| > 1$, we use the simple inequality $(1+t)e^{-t} \leq e^{-t/4}$ to bound the sum as follows:
$$\sum_{\substack{k = Q(1+t) \\ |t| > 1}} q^k k^n \leq \sum_{\substack{k = Q(1+t) \\ |t| > 1}} \frac{Q^n}{e^n} e^{-nt/4} = O \Big( Q \cdot \frac{Q^n}{e^n} e^{-n/4} \Big)$$
by summing the geometric series. Since $Q \to \infty$, this is again negligible compared to the single term $q^{k_0}k_0^n$. So it follows that only terms in the sum with $|t| \leq T$ (equivalently, $|k - Q| \leq QT$) are relevant. This gives us
$$[x^n] f_0(x) \sim \frac{(1-q)^{n+1}}{q n!} \sum_{|k - Q| \leq QT} q^k k^n,$$
and by the same reasoning
$$[x^{n+1}] f_0(x) \sim \frac{(1-q)^{n+2}}{q (n+1)!} \sum_{|k - Q| \leq QT} q^k k^{n+1}.$$
Now the quotient of corresponding terms in the two sums is always $k = Q (1+O(T)) = \frac{n}{\log(1/q)} (1 + o(1))$, and it follows that
$$\frac{[x^{n+1}] f_0(x)}{[x^n] f_0(x)} \sim \frac{1-q}{n+1} \cdot \frac{n}{\log(1/q)} \sim \frac{1-q}{\log(1/q)},$$
completing the proof.
\end{proof}

Now we can finally obtain a more complete picture of the asymptotic behaviour of $M_r(x)$.

\begin{proof}[Proof of Theorem \ref{thm:momentsbehaviour}]
Let us again drop the dependence of $q$ on $n$ and only write $q$ instead of $q_n$.
We prove the statement by induction on $r$. For $r=1$,~\eqref{eq:moment_recursion} gives us
$$[x^n] M_1(x) = \frac{1}{1-q} [x^n] M_0'(x) + [x^n] f_1(x),$$
since $M_0(x) = f_0(x) = S(x,1)$. Now Lemma~\ref{lem:fr} implies that
$$[x^n] M_1(x) = \frac{n+1}{1-q} [x^{n+1}] M_0(x) - \frac{q(n+1)}{1-q} [x^n] f_0(x) = \frac{n+1}{1-q} \big( [x^{n+1}] f_0(x) - q [x^n] f_0(x) \big).$$
Lastly, we apply Lemma~\ref{lem:f0} to obtain
$$[x^n] M_1(x) \sim \frac{n}{1-q} \Big( \frac{1-q}{\log(1/q)} - q \Big) [x^n] f_0(x),$$
which does indeed agree with the statement, as $\mu_1(q) = \frac{1}{\log(1/q)} - \frac{q}{1-q}$. For the induction step, we note that, by~\eqref{eq:moment_recursion},
$$[x^n] M_r(x) = \frac{r}{1-q} [x^n] M_{r-1}'(x) - r [x^n] M_{r-1}(x) + [x^n] f_r(x) + r [x^n] f_{r-1}(x).$$
By the induction hypothesis and Lemma~\ref{lem:f0}, we have
$$[x^n] M_{r-1}'(x) = (n+1) [x^{n+1}] M_{r-1}(x) \sim n^r \mu_{r-1}(q) [x^{n+1}] f_0(x) \sim n^r \mu_{r-1}(q) \frac{1-q}{\log(1/q)} [x^n] f_0(x)$$
as well as
$$[x^n] M_{r-1}(x)  \sim n^{r-1}  \mu_{r-1}(q) [x^n] f_0(x).$$
Moreover, by Lemma~\ref{lem:fr},
$$[x^n] f_r(x) \sim - \frac{q}{1-q} n^r [x^n] f_0(x) \qquad \text{and} \qquad [x^n] f_{r-1}(x) \sim - \frac{q}{1-q} n^{r-1} [x^n] f_0(x).$$
Consequently, only the terms involving $M_{r-1}'(x)$ and $f_r(x)$ are asymptotically relevant, and we obtain
$$[x^n] M_r(x) \sim \frac{r}{1-q} \cdot n^r \mu_{r-1}(q) \frac{1-q}{\log(1/q)} [x^n] f_0(x) - \frac{q}{1-q} n^r [x^n] f_0(x).$$
In view of~\eqref{eq:momentrec}, this completes the induction.
\end{proof}

\section{Proof of Theorems \ref{thm:mainnondegenerate} and \ref{thm:mainconstant}}\label{sec:proofs}

We now have all the ingredients to prove Theorems \ref{thm:mainnondegenerate} and \ref{thm:mainconstant}. Let us start with the former. We first need the following lemma:

\begin{lemma}
\label{lem:integrals}
For any fixed $r \geq 0$, we have, as $x \to 0^+$:

\begin{align*}
\int_0^\infty x^v v^r dv \sim \int_0^1 x^v v^r dv \sim \frac{r!}{(-\log x)^{r+1}}.
\end{align*}
\end{lemma}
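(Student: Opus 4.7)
The plan is to reduce both integrals to the standard Gamma integral by the substitution $u = -v \log x = v \log(1/x)$, which is natural because $x^v = e^{v\log x} = e^{-u}$ and the change of variable variable is a positive scaling as soon as $x\in(0,1)$.

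First I would handle the integral over $(0,\infty)$: with $a := -\log x > 0$ and $u = av$, the substitution gives
\begin{align*}
\int_0^\infty x^v v^r\,dv = \int_0^\infty e^{-av} v^r\,dv = \frac{1}{a^{r+1}} \int_0^\infty e^{-u} u^r\,du = \frac{r!}{a^{r+1}} = \frac{r!}{(-\log x)^{r+1}},
\end{align*}
which is in fact an exact equality, not just an asymptotic. This gives the second of the two claimed equivalences directly.

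Next I would show that the integral over $(0,1)$ is asymptotically equivalent to the integral over $(0,\infty)$. Applying the same substitution yields
\begin{align*}
\int_0^1 x^v v^r\,dv = \frac{1}{a^{r+1}} \int_0^{a} e^{-u} u^r\,du,
\end{align*}
and since $a \to \infty$ as $x \to 0^+$, the incomplete Gamma integral $\int_0^{a} e^{-u} u^r\,du$ converges to $\Gamma(r+1) = r!$. Dividing the two expressions then shows that the quotient of the two integrals tends to $1$, establishing the first equivalence. There is no real obstacle here; the only thing to be a bit careful about is that the substitution requires $x \in (0,1)$, which is fine since we are taking $x \to 0^+$.
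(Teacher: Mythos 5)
Your proof is correct, and it takes a genuinely different route from the paper's. The paper sets $I_r := \int_0^\infty x^v v^r\,dv$ and $J_r := \int_0^1 x^v v^r\,dv$, computes the base cases $I_0 = \frac{1}{-\log x}$ and $J_0 = \frac{1-x}{-\log x}$, and then derives by integration by parts the recursions $I_r = \frac{r}{-\log x} I_{r-1}$ and $J_r = \frac{x}{\log x} + \frac{r}{-\log x} J_{r-1}$, from which the asymptotics follow inductively. You instead substitute $u = av$ with $a = -\log x$ to reduce both integrals directly to the Gamma integral: $I_r = r!/a^{r+1}$ exactly, and $J_r = a^{-(r+1)}\int_0^a e^{-u}u^r\,du$, with the incomplete Gamma integral tending to $r!$ as $a \to \infty$. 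Your approach is arguably cleaner: it yields an exact closed form for $I_r$ and identifies the error term as a tail of the Gamma integral, avoiding the induction; the paper's recursive approach, on the other hand, keeps things elementary and fits the recursive style used elsewhere in that section. Both are short and complete.
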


\begin{proof}
Setting $I_r:= \int_{v=0}^\infty x^v v^r dv$ and $J_r :=  \int_0^1 x^v v^r dv$, we have $I_0=\frac{1}{-\log x}$, $J_0=\frac{1-x}{-\log x}$, and for all $r \geq 1$:
\begin{align*}
I_r = \frac{r}{-\log x} I_{r-1}, J_r = \frac{x}{\log x} + \frac{r}{-\log x} J_{r-1}.
\end{align*}

Hence, for any fixed $r$, as $x \to 0^+$, we have $I_r \sim J_r \sim \frac{r!}{(-\log x)^{r+1}}$.
\end{proof}

\begin{proof}[Proof of Theorem \ref{thm:mainnondegenerate}]
By Corollary \ref{cor:expectation} and Lemma \ref{lem:integrals}, we get
\begin{equation}
\label{eq:forr=1}
\frac{\log(1/q_n)}{n} S_n^{(q_n)}(1) \underset{n \to \infty}{\overset{(d)}{\to}} X_1,
\end{equation}
where $X_1 \sim \Exp(1)$, since the $r$-th moment converges to $r!$, which is the $r$-th moment of an $\Exp(1)$-distribution, for every $r$.

In order to generalize to $(S_n^{(q_n)}(1), \ldots, S_n^{(q_n)}(r))$, observe first that, with high probability, 
\begin{equation}
\label{eq:ordered}
S_n^{(q_n)}(1) < S_n^{(q_n)}(2) < \ldots < S_n^{(q_n)}(r).
\end{equation}

\noindent Indeed, fix $R \geq r+1$. Conditionally on $(S_n^{(q_n)}(i))_{1 \leq i \leq R}$, $(S_n^{(q_n)}(i))_{R+1 \leq i \leq n}$ only depends on $S_n^{(q_n)}(R)$. Hence, conditionally on the value of $S_n^{(q_n)}(R)$ and on the set of values $(S_n^{(q_n)}(i))_{1 \leq i \leq R-1}$, it is clear that with high probability, the first $R$ elements of $S_n^{(q_n)}$ induce at most one descent (since $q_n \to 0$). If there is no descent, then we get \eqref{eq:ordered}. Otherwise, conditionally on exactly one descent occurring, using the same argument as in the proof of Lemma \ref{lem:permufix}, the $\limsup$ of the probability that it occurs at one of the first $r$ positions goes to $0$ as $R \to \infty$. This also implies \eqref{eq:ordered}.

Now observe that, conditionally on $S_n^{(q_n)}(1)$ and on the event $S_n^{(q_n)}(1) < S_n^{(q_n)}(2)$, $S_n^{(q_n)}(2)$ is distributed as $1+T_{n-1,S_n^{(q_n)}(1)}^{(q_{n})}$, where $T_{n,k}^{(q_n)}$ is an independent copy of $S_n^{(q_n)}$ conditioned on $T_{n,k}^{(q_n)}(1) \geq k$. For any $r > 0$, and any sequence $(k_n, n \geq 1)$ such that $\frac{\log(1/q_n)}{n} k_n \underset{n \to \infty}{\to} r$, we have by \eqref{eq:forr=1}:
\begin{equation}
\label{eq:condit}
\frac{\log(1/q_n)}{n} T_{n,k_n}^{(q_n)}(1) \underset{n \to \infty}{\overset{(d)}{\to}} \left( E' \big| E' \geq r \right).
\end{equation}
Furthermore, for all $r \geq 0$, we have that $\left( E' \big| E' \geq r\right) \overset{(d)}{=} r+E''$, where $E'' \sim \Exp(1)$.

For convenience, write $(S_1, S_2)$ for $(S_n^{(q_n)}(1), S_n^{(q_n)}(2))$. Writing $\R_+$ for the set of nonnegative real numbers, let $G: \R_+^2 \to \R$ be continuous and bounded. For any $x \geq 0$, we have by \eqref{eq:condit} that
\begin{align*}
g_n: x \mapsto \E\left[G(S_1, S_2-S_1) \big| S_1<S_2, S_1 = \left\lfloor \frac{n}{\log(1/q_n)} x \right\rfloor\right]
\end{align*}
converges pointwise to $\E[G(x,E)]$, where $E \sim \Exp(1)$ is independent of the value $x$. Hence, using \eqref{eq:forr=1}, we get by dominated convergence that, for any $a>0$:
\begin{align*}
\int_0^a g_n(x) dx \to \E[G(E,E') \mathds{1}_{E < a}],
\end{align*}
where $E, E'$ are two independent $\Exp(1)$ variables. Letting $a \to \infty$, we get
\begin{align*}
(S_1, S_2) \underset{n \to \infty}{\overset{(d)}{\to}} (E,E+E').
\end{align*}

This completes the proof of Theorem \ref{thm:mainnondegenerate} in the case $r=2$. The general case is proved in the same way.
\end{proof}

We conclude by dealing with the case where $q \in [0,1]$ is constant. By Corollary \ref{cor:expectation}, letting $X$ be a random variable with density $\frac{q^x dx}{\int_0^1 q^v dv}$ on $[0,1]$, we get that

\begin{align*}
\frac{S_n^{(q)}(1)}{n} \underset{n \to \infty}{\overset{(d)}{\to}} X.
\end{align*}

Let  $D^{(r)}$ denote the density of $(X_1, \ldots, X_r)$, where $(X_i, i\geq 1)$ is the process introduced in Theorem \ref{thm:mainconstant}. For $n \geq 1$, we write $A_n(q)$ for the $n$-th Eulerian polynomial:
\begin{align*}
A_n(q) = \sum_{k=1}^n P_n^k(q),
\end{align*}
i.e., $A_n(q)$ is the weighted number of permutations of size $n$, where the weight associated with a permutation $\sigma$ is $q^{d(\sigma)}$. In the notation of Section~\ref{sec:gf_descent-biased}, we have $A_n(q) = R_n(q,1)$. Finally, set
\begin{align*}
C_q = \frac{\log(1/q)}{1-q}.
\end{align*}

\begin{lemma}
\label{lem:densityconstant}
For any $(x_1, \ldots, x_r) \in [0,1]^r$,
\begin{align*}
D^{(r)}(x_1, \ldots, x_r) = C_q^r \frac{r!}{A_r(q)} q^{\sum_{i=1}^r x_i} q^{d(\sigma_r)},
\end{align*}
where $\sigma_r$ is the permutation of $\llbracket 1, r \rrbracket$ induced by $x_1, \ldots, x_r$.
\end{lemma}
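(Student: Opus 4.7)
The plan is to compute $D^{(r)}$ directly by unfolding the Markov chain definition; the computation splits naturally into normalization, multiplication, and rearrangement.

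For the normalization, a direct integration (splitting at $y$ and using the identity $C_q(1-q) = \log(1/q)$) yields
$$\int_0^1 C_q q^x q^{\mathds{1}_{x<y}}\,dx = q^y$$
for every $y \in [0,1]$. Consequently, the conditional density of $X_{j+1}$ given $X_j = y$ equals $\frac{C_q q^x q^{\mathds{1}_{x<y}}}{q^y}$.

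By the Markov property, the joint density factorizes as
$$D^{(r)}(x_1, \ldots, x_r) = C_q q^{x_1}\prod_{j=1}^{r-1} \frac{C_q q^{x_{j+1}}q^{\mathds{1}_{x_{j+1}<x_j}}}{q^{x_j}}.$$
The prefactors accumulate into $C_q^r$; the indicator factors collapse into $q^{d(\sigma_r)}$, since $d(\sigma_r) = \sum_{j=1}^{r-1} \mathds{1}_{x_{j+1}<x_j}$; and the powers of $q$ in the $x_j$'s partially telescope.

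The final step is to recast the resulting expression in the symmetric form $C_q^r \frac{r!}{A_r(q)} q^{\sum_{i=1}^r x_i} q^{d(\sigma_r)}$ stated in the lemma. For this I would invoke the identity $A_r(q) = \sum_{\pi \in \kS_r} q^{d(\pi)}$ together with the auxiliary integral
$$\int_{[0,1]^r} q^{\sum_{i=1}^r x_i} q^{d(\sigma_r)}\,dx = \frac{A_r(q)}{r!\,C_q^r},$$
which follows from partitioning $[0,1]^r$ along the induced permutation and exploiting the symmetry of $q^{\sum x_i}$ across such cells. The main obstacle I anticipate is precisely this last rearrangement: reconciling the telescoped form coming out of the Markov-chain unfolding with the symmetric product $q^{\sum x_i}$ on the right-hand side requires careful bookkeeping of the $q$-powers, and one has to check that the two probability densities match pointwise rather than just have the same integral.
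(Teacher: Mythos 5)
Your direct unfolding of the Markov chain is correct, and in fact the telescoping you describe can be carried to completion explicitly: the normalization integral $\int_0^1 C_q q^x q^{\mathds{1}_{x<y}}\,dx = q^y$ gives
\begin{align*}
D^{(r)}(x_1,\ldots,x_r) \;=\; C_q q^{x_1}\prod_{j=1}^{r-1}\frac{C_q q^{x_{j+1}}q^{\mathds{1}_{x_{j+1}<x_j}}}{q^{x_j}}\;=\; C_q^r\, q^{x_r}\, q^{d(\sigma_r)},
\end{align*}
since the factors $q^{x_j}$ cancel in the telescoping product leaving only $q^{x_r}$. The obstacle you flag at the end is not a bookkeeping inconvenience but a genuine impossibility: $C_q^r q^{x_r} q^{d(\sigma_r)}$ and $C_q^r \tfrac{r!}{A_r(q)} q^{\sum_i x_i} q^{d(\sigma_r)}$ are \emph{different} functions of $(x_1,\ldots,x_r)$. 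They differ pointwise by the factor $\tfrac{r!}{A_r(q)}\, q^{\sum_{i<r} x_i}$, which is not constant, so no ``recast'' can equate them. (Both do integrate to $1$ over $[0,1]^r$, so comparing integrals will not detect the mismatch; your instinct that one ``has to check that the two probability densities match pointwise rather than just have the same integral'' is exactly right, and they do not.) One can see the discrepancy already for $r=2$: the telescoped form gives $C_q^2 q^{x_2} q^{\mathds{1}_{x_2<x_1}}$, which is independent of $x_1$ once the descent indicator is fixed, whereas the claimed formula $\tfrac{2C_q^2}{1+q} q^{x_1+x_2} q^{\mathds{1}_{x_2<x_1}}$ depends on $x_1$ through $q^{x_1}$.

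For context: the paper's own proof of Lemma~\ref{lem:densityconstant} begins by asserting that $D^{(r)}(x_1,\ldots,x_r)$ is proportional to $q^{x_1}\prod_{i=2}^r q^{x_i} q^{\mathds{1}_{x_i<x_{i-1}}}$, and then evaluates the resulting normalizing constant via a symmetrization over $\kS_r$. But that initial proportionality statement is precisely the point at which the normalizations $q^{-x_{i-1}}$ of the conditional densities are silently dropped; the product of the \emph{unnormalized} conditional kernels is not proportional to the joint density of the Markov chain. In other words, the formula in the lemma does not follow from the Markov-chain definition given earlier in the paper, and your computation correctly exposes this. If you want a consistent account, take the telescoped expression $C_q^r q^{x_r} q^{d(\sigma_r)}$ as the joint density; you can check directly that this family is consistent under marginalization (integrating out $x_{r}$ returns $D^{(r-1)}$), whereas the $q^{\sum_i x_i}$ form is not.
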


\begin{proof}
We know that $D^{(r)}(x_1, \ldots, x_r)$ is proportional to $q^{x_1} \prod_{i=2}^r (q^{x_i} q^{\mathds{1}_{x_i<x_{i-1}}}) = q^{\sum_{i=1}^r x_i} q^{d(\sigma_r)}$.
Hence, the only thing left to check is that
\begin{equation}
\label{eq:density}
I := \int_{x_1, \ldots, x_r=0}^1 q^{x_1} \prod_{i=2}^r \Big( q^{x_i} q^{\mathds{1}_{x_i<x_{i-1}}} \Big) \prod_{i=1}^r dx_i = C_q^{-r} \frac{A_r(q)}{r!}.
\end{equation}
To see this, for any $\sigma \in \kS_r$, set 
\begin{align*}
I_\sigma := \int_{x_1, \ldots, x_r=0}^1 q^{x_{\sigma(1)}} \prod_{i=2}^r \Big( q^{x_{\sigma(i)}} q^{\mathds{1}_{x_{\sigma(i)}<x_{\sigma(i-1)}}} \Big) \prod_{i=1}^r dx_i.
\end{align*}
Then, by symmetry, for all $\sigma \in \kS_r$, $I_\sigma = I$. Furthermore, we have 
\begin{align*}
r! I = \sum_{\sigma \in \kS_r} I_\sigma &= \sum_{\sigma \in \kS_r} \int_{x_1, \ldots, x_r=0}^1 q^{\sum_{i=1}^r x_i} q^{d(\sigma_r \circ \sigma)} \prod_{i=1}^r dx_i\\
&= A_r(q) \prod_{i=1}^r \int_0^1 q^x dx\\
&= A_r(q) C_q^{-r},
\end{align*}
using the fact that $\int_0^1 q^x dx = C_q^{-1}$. This implies \eqref{eq:density} and thus Lemma \ref{lem:densityconstant}.
\end{proof}

Let us now state a result that will also be useful in the next section.

\begin{proposition}
\label{prop:loclim1}
Fix $q \in (0,1]$. Then, the following hold:
\begin{itemize}
\item[(i)] Uniformly for all $1 \leq k \leq n$,  we have
\begin{align*}
n \P \left(S_n^{(q)}(1)=k \right) \underset{n \to \infty}{\sim} \frac{\log (1/q)}{1-q} q^{k/n}.
\end{align*}

\item[(ii)]
For any $r \geq 1$, uniformly for any distinct positive integers $k_1,\ldots,k_r \in \llbracket 1, n \rrbracket$, we have
\begin{align*}
n^r \P\left( S_n^{(q)}(1)=k_1, \ldots, S_n^{(q)}(r)=k_r \right) \underset{n \to \infty}{\sim} D^{(r)}\left( \frac{k_1}{n}, \ldots, \frac{k_r}{n} \right),
\end{align*}
where we recall that $D^{(r)}: [0,1]^r \to [0,\infty)$ is the density of $(X_1, \ldots, X_r)$ in Theorem \ref{thm:mainconstant}, given by Lemma \ref{lem:densityconstant}. 
\end{itemize} 
\end{proposition}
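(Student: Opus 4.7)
The plan is to bootstrap both parts from the recursion for $P_n^k(q)$ in Section~\ref{sec:gf_descent-biased} together with the weak convergence of Corollary~\ref{cor:expectation}. For part (i), the recursion rearranges as
\begin{align*}
P_n^k(q) = A_{n-1}(q)\bigl[1 - (1-q) F_{n-1}(k-1)\bigr],
\end{align*}
where $F_{n-1}(j) = \P(S_{n-1}^{(q)}(1) \le j)$. Dividing by $A_n(q)$ and applying Lemma~\ref{lem:f0} to $A_n(q) = n!\,[x^n] f_0(x)$ gives $A_{n-1}(q)/A_n(q) \sim C_q/n$, with $C_q = \log(1/q)/(1-q)$. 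Corollary~\ref{cor:expectation} identifies the weak limit of $S_n^{(q)}(1)/n$ as having continuous cdf $F_\infty(x) = (1-q^x)/(1-q)$ on $[0,1]$, so by P\'olya's theorem $\sup_k \lvert F_{n-1}(k-1) - F_\infty(k/n)\rvert \to 0$. Since $1-(1-q)F_\infty(x) = q^x$, this yields (i).

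For part (ii), the plan is to decompose the joint probability by recording the first $r$ entries and order-preservingly relabelling the remaining $n-r$ values to a permutation $\tilde\sigma \in \kS_{n-r}$. Writing $\sigma_r \in \kS_r$ for the pattern of $(k_1,\dots,k_r)$, and setting $c = k_r - 1 - \lvert\{i<r : k_i<k_r\}\rvert$ (the number of elements of $\llbracket 1,n\rrbracket\setminus\{k_1,\dots,k_r\}$ less than $k_r$), the descent count of the full permutation splits as $d(\sigma_r) + \mathds{1}_{\tilde\sigma(1) \le c} + d(\tilde\sigma)$. Summing over completions yields
\begin{align*}
\P\bigl(S_n^{(q)}(1)=k_1,\dots,S_n^{(q)}(r)=k_r\bigr) = \frac{q^{d(\sigma_r)}\, A_{n-r}(q)}{A_n(q)}\bigl[1 - (1-q) F_{n-r}(c)\bigr].
\end{align*}
Iterating the asymptotic from (i) gives $A_{n-r}(q)/A_n(q) \sim (C_q/n)^r$ for fixed $r$. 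Since $c/(n-r) = k_r/n + O(1/n)$ uniformly in $(k_1,\dots,k_r)$, the uniform continuity of $F_\infty$ combined with P\'olya's theorem forces the bracket to converge uniformly to $q^{k_r/n}$. This gives $n^r\P(\dots) \sim C_q^r\, q^{d(\sigma_r)}\, q^{k_r/n} = D^{(r)}(k_1/n,\dots,k_r/n)$.

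The main obstacle is the combinatorial decomposition in part (ii), specifically identifying $c$ as the correct threshold for the crossing descent between positions $r$ and $r+1$ after relabelling, so that the joint probability factors cleanly as a product of $q^{d(\sigma_r)}$, the ratio $A_{n-r}(q)/A_n(q)$, and a single bracketed term of the same form as in (i). Once that identity is in hand, both parts share the same asymptotic inputs, namely Lemma~\ref{lem:f0} and the uniform upgrade of Corollary~\ref{cor:expectation}, and the argument becomes a direct combination.
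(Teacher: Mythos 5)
Your proof is correct and takes a more direct route than the paper's. The key step is the exact identity
\begin{align*}
\P\bigl(S_n^{(q)}(1)=k_1,\dots,S_n^{(q)}(r)=k_r\bigr) = \frac{q^{d(\sigma_r)}\, A_{n-r}(q)}{A_n(q)}\bigl[1 - (1-q) F_{n-r}(c)\bigr],
\end{align*}
obtained by order-preservingly relabelling the tail, where $c$ counts the remaining values below $k_r$: the crossing descent occurs precisely when $\tilde\sigma(1) \le c$, and summing the weight $q^{\mathds{1}_{\tilde\sigma(1)\le c}} q^{d(\tilde\sigma)}$ over all tails yields the bracket. This closed form reduces (ii) to the same asymptotic inputs as (i), namely Lemma~\ref{lem:f0} for $A_{n-r}(q)/A_n(q) \sim (C_q/n)^r$ and P\'olya's theorem upgrading Corollary~\ref{cor:expectation} to uniform cdf convergence. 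The paper instead does an induction on $r$, writing $p_{r+1}$ as $p_r/Z_n^{(r+1)}$ times a conditional factor and asserting that the normalizer $Z_n^{(r+1)}$ tends to a constant $K_r$; your closed form shows that the true conditional normalizer $1-(1-q)F_{n-r}(c_r)$ tends to $q^{k_r/n}$, which is \emph{not} constant in $(k_1,\dots,k_r)$, so your route sidesteps a real gap in the paper's argument.

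Note, however, that your resulting limit $C_q^r\, q^{d(\sigma_r)}\, q^{x_r}$ does \emph{not} agree with the formula stated in Lemma~\ref{lem:densityconstant}, which asserts $D^{(r)} = C_q^r\, \frac{r!}{A_r(q)}\, q^{\sum_i x_i}\, q^{d(\sigma_r)}$; already for $r=2$ the two differ by the nonconstant factor $\frac{2 q^{x_1}}{1+q}$. Your expression is the correct density of the Markov chain as defined in the introduction: since $\int_0^1 C_q q^x q^{\mathds{1}_{x<x_j}}\,dx = q^{x_j}$, the product of conditional densities telescopes to $D^{(r)}(x_1,\dots,x_r) = C_q^r q^{x_r} q^{d(\sigma_r)}$, whereas the expression in Lemma~\ref{lem:densityconstant} does not even have the correct $X_1$-marginal $C_q q^{x_1}$. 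That lemma tacitly treats the joint density as globally proportional to $q^{\sum x_i} q^{d(\sigma_r)}$, ignoring the dependence of each conditional normalizing constant on the preceding value. Your derivation exposes this discrepancy, and it is worth flagging explicitly, since Proposition~\ref{prop:loclim1}(ii) cites Lemma~\ref{lem:densityconstant} for the formula for $D^{(r)}$.
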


Observe that this immediately implies Theorem \ref{thm:mainconstant} as well. For $q = 1$, the quotient $\frac{\log (1/q)}{1-q}$ is again interpreted as $1$. In this case, the statement becomes trivial since $S_n^{(1)}$ is simply a uniformly random permutation.

\begin{proof}[Proof of Proposition \ref{prop:loclim1}]
Let us first prove (i).
Recall that we have
\begin{align*}
\frac{S_n^{(q)}(1)}{n} \underset{n \to \infty}{\overset{(d)}{\to}} Y_q,
\end{align*}
where $Y_q$ is a random variable on $[0,1]$ with density $C_q q^v$, for $C_q := \left( \int_0^1 q^v dv \right)^{-1}=\frac{\log(1/q)}{1-q}$. Let us now fix $x \in (0,1)$ and prove that 
\begin{align*}
\P\left( S_n^{(q)}(1) = \lfloor xn \rfloor \right) \underset{n \to \infty}{\sim} C_q q^x.
\end{align*}
To show this, just observe that by monotonicity, for sufficiently small $\epsilon>0$, we have
\begin{align*}
\P\left( S_n^{(q)}(1) = \lfloor xn \rfloor \right) \geq \left(\lfloor n (x+\epsilon)\rfloor - \lfloor nx \rfloor\right) \P\left( S_n^{(q)}(1) \in [xn, (x+\epsilon)n] \right), 
\end{align*}
so that 
\begin{align*}
\liminf_{n \to \infty} \frac{\P\left( S_n^{(q)}(1) = \lfloor xn \rfloor \right)}{n} \geq C_q q^x.
\end{align*}
The same argument provides 
\begin{align*}
\limsup_{n \to \infty} \frac{\P\left( S_n^{(q)}(1) = \lfloor xn \rfloor \right)}{n} \leq C_q q^x.
\end{align*}

Also by the same argument, this convergence holds jointly for any $k$-tuple $(x_1, \ldots, x_k) \in [0,1]^k$. 
By monotonicity again, we obtain Proposition \ref{prop:loclim1} (i) if we can prove that this estimate also holds for $k=1$ and $k=n$. Observe that
\begin{align*}
\P \left( S_n^{(q)}(1)=1 \right) = \frac{[x^{n-1}] S(x,1)}{[x^n] S(x,1)} = q^{-1} \P \left( S_n^{(q)}(1) =n \right).
\end{align*}

Since the density of the function at $1$ is exactly $q$ times the density of the function at $0$, we get the result.

Let us now prove (ii) by induction on $r \geq 1$. The case $r=1$ corresponds to (i). Assume now that (ii) is true for some $r \geq 1$. Fix $k_1, \ldots, k_{r+1} \in \llbracket 1, n \rrbracket$, all distinct, set $p_r := \P \left( S_n^{(q)}(1)=k_1, \ldots, S_n^{(q)}(r)=k_r \right)$, and observe that
\begin{align*}
\P \left( S_n^{(q)}(1)=k_1, \ldots, S_n^{(q)}(r+1)=k_{r+1} \right) 
&= \frac{p_r}{Z_n^{(r+1)}} \left( \sum_{i=0}^{r} \mathds{1}_{\{k_r<k_{r+1}\} \cap E_i} \P \left( S_{n-r}^{(q)}(1)=k_{r+1}-i \right) \right. \\ 
&\qquad+ \left. q \sum_{i=0}^{r} \mathds{1}_{\{k_r>k_{r+1}\} \cap E_i} \P \left( S_{n-r}^{(q)}(1)=k_{r+1}-i \right)\right),
\end{align*}
where $E_i$ is the event that exactly $i$ elements among $\{k_1, \ldots, k_r \}$ are smaller than $k_{r+1}$, and $Z_n^{(r+1)}$ is a renormalizing constant. In particular, using (i), we get that
\begin{align*}
n^{r+1} \P \left( S_n^{(q)}(1)=k_1, \ldots, S_n^{(q)}(r+1)=k_{r+1} \right) &\sim \frac{p_r n^{r+1}}{Z_n^{(r+1)}}  \P \left( S_n^{(q)}(1)=k_{r+1} \right) q^{\mathds{1}_{k_{r+1}<k_r}}\\
&\sim \frac{1}{Z_n^{(r+1)}} D^{(r)}\left( \frac{k_1}{n}, \ldots, \frac{k_r}{n} \right) D^{(1)}\left( \frac{k_{r+1}}{n} \right) q^{\mathds{1}_{k_{r+1}<k_r}}.
\end{align*}
We can now use Lemma \ref{lem:densityconstant} (in particular, $D^{(1)}(x) = C_q q^x$) to get
\begin{equation}
\label{eq:Dr}
n^{r+1} \P \left( S_n^{(q)}(1)=k_1, \ldots, S_n^{(q)}(r+1)=k_{r+1} \right) \sim \frac{K_r}{Z_n^{(r+1)}} D^{(r+1)}\left( \frac{k_1}{n}, \ldots, \frac{k_{r+1}}{n} \right),
\end{equation}
where 
\begin{align*}
K_r = \frac{A_{r+1}(q)}{(r+1)A_r(q)}.
\end{align*}
To see this, just observe that, by Lemma \ref{lem:densityconstant},
\begin{align*}
D^{(r)}\left( \frac{k_1}{n}, \ldots, \frac{k_r}{n} \right) D^{(1)}\left( \frac{k_{r+1}}{n} \right) q^{\mathds{1}_{k_{r+1}<k_r}} &= C_q^{r+1} \frac{r!}{A_r(q) \, A_1(q)} q^{\sum_{i=1}^r k_i/n} q^{k_{r+1}/n} q^{d(\sigma_r)} q^{\mathds{1}_{k_{r+1}<k_r}}\\
&= C_q^{r+1} \frac{r!}{A_r(q)} q^{\sum_{i=1}^{r+1} k_i/n} q^{d(\sigma_{r+1})}
\end{align*}
and
\begin{align*}
D^{(r+1)}\left( \frac{k_1}{n}, \ldots, \frac{k_{r+1}}{n} \right) = C_q^{r+1} \frac{(r+1)!}{A_{r+1}(q)} q^{\sum_{i=1}^{r+1} k_i/n} q^{d(\sigma_{r+1})},
\end{align*}
which provides \eqref{eq:Dr}. We thus only need to prove that, for all $r$,
\begin{equation}
\label{eq:kr}
Z_n^{(r+1)} \underset{n \to \infty}{\to} K_r.
\end{equation}
To this end, just observe that
\begin{align*}
Z_n^{(r+1)} &= \sum_{k_1, \ldots, k_{r+1}} p_r \left( \sum_{i=0}^{r} \mathds{1}_{\{k_r<k_{r+1}\} \cap E_i}  \P \left( S_{n-r}^{(q)}(1)=k_{r+1}-i \right) \right. \\ 
&\quad+ \left. q \sum_{i=0}^{r} \mathds{1}_{\{k_{r+1}<k_r\} \cap E_i} \P \left( S_{n-r}^{(q)}(1)=k_{r+1}-i \right)\right)\\
&\sim \sum_{k_1, \ldots, k_{r+1}} p_r \P \left( S_n^{(q)}(1)=k_{r+1} \right) q^{\mathds{1}_{k_{r+1}<k_r}}\\
&\sim n^{-r-1} \sum_{k_1, \ldots, k_{r+1}} D^{(r)}\left( \frac{k_1}{n}, \ldots, \frac{k_r}{n} \right) D^{(1)}\left( \frac{k_{r+1}}{n} \right) q^{\mathds{1}_{k_{r+1}<k_r}}\\
&\sim n^{-r-1} K_r \sum_{k_1, \ldots, k_{r+1}} D^{(r+1)}\left( \frac{k_1}{n}, \ldots, \frac{k_{r+1}}{n} \right)\\
&\to K_r \int_{x_1, \ldots, x_{r+1}\in [0,1]} D^{(r+1)} (x_1, \ldots, x_{r+1}) \prod_{i=1}^{r+1} dx_i = K_r.
\end{align*}
This completes the proof of \eqref{eq:kr}, and thus of Proposition \ref{prop:loclim1}.
\end{proof}

\section{Mesoscopic limit of a nondegenerate Eulerian permutation}
\label{sec:mesoscopic}

In this section, we consider a sequence $(q_n)_{n \geq 1}$ with values in $(0,1)$ such that $q_n \to 0$ and $\log(1/q_n) = o(n)$, and we are interested in the asymptotic behaviour of the first $\lfloor c \log(1/q_n) \rfloor$ elements of $S_n^{(q_n)}$, for some fixed constant $c$ (observe that for constant $q_n$, this is already included in Theorem \ref{thm:mainconstant}). In this context, it turns out that a window of size $\Theta (\log(1/q_n))$ is the right choice to observe nontrivial asymptotic behaviour. In this window, it is natural to expect $S_n^{(q_n)}$ to behave like a degenerate permutation. Therefore, we first gather global results on degenerate permutations, before using them on nondegenerate ones.

\subsection{Global properties of a degenerate permutation}

In this part, fix $c>0$, and consider a sequence $(q_n)$ satisfying $\frac{n}{\log(1/q_n)} \underset{n \to \infty}{\to} c$. We investigate the asymptotic behaviour of the degenerate permutation $S_n^{(q_n)}$. 

%

%
%
%

Our result, in the same vein as Lemma \ref{lem:permufix}, shows that the study of a degenerate permutation boils down to investigating binomial variables. We consider the permutation $T_n^k(j)$ defined as follows. Let $A_1, \ldots, A_{j-1}$ be $j-1$ i.i.d.~random variables, all uniform on $\{ 2, \ldots, k+1 \}$, and let $A_{j+1}, \ldots, A_n$ be $n-j$ i.i.d.~random variables, all uniform on $\{ 1, \ldots, k+1 \}$. For convenience, set also $A_j=1$. For all $1 \leq i \leq k+1$, set $F_i := \{ p \in \llbracket 1, n \rrbracket, A_p=i \}$, and let $L_i$ be the list of all elements of $F_i$ sorted in increasing order. Finally, let $T_n^k(j)$ be the concatenation of $L_1, \ldots, L_{k+1}$.

\begin{lemma}
\label{lem:permufixfirstentryfixed}
Fix a nonnegative integer $k$, and let $E_n^k(j)$ be the event that $T_n^k(j)$ has $k$ descents. Then, uniformly for $j \in \llbracket 1,n \rrbracket$,
\begin{itemize}
\item[(i)] $\displaystyle \P \left( E_n^k(j) \right) \underset{n \to \infty}{\to} 1$.

\item[(ii)] 
Conditionally on $E_n^k(j)$, the permutation $T_n^k(j)$ is uniform among all permutations of $\kS_n$ with $k$ descents starting with $j$.
\end{itemize}
\end{lemma}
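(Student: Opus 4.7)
The plan is to mimic the proof of Lemma \ref{lem:permufix} with the extra bookkeeping needed for the conditioning $A_j = 1$ and $A_v \in \{2, \ldots, k+1\}$ for $v < j$. These constraints are exactly equivalent to the requirement $\min F_1 = j$, which forces $T_n^k(j)$ to start with $j$; conversely, on the event $E_n^k(j)$ the lists $L_1, \ldots, L_{k+1}$ are exactly the $k+1$ ascending runs of $T_n^k(j)$.

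Part (ii) will be the cleaner of the two. Under the constraints above, each sequence $(A_1, \ldots, A_n)$ has probability exactly $k^{-(j-1)} (k+1)^{-(n-j)}$, so the law of $(A_v)_v$ is uniform on the constrained cube. On $E_n^k(j)$, the map $(A_v)_v \mapsto T_n^k(j)$ is a bijection from this cube onto the set of permutations of $\kS_n$ with $k$ descents starting with $j$: the inverse simply reads off the ascending runs and assigns $A_v = i$ whenever $v$ lies in the $i$-th run. Uniformity on the cube then transfers to uniformity on the target set.

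For part (i), I would check that, with high probability uniformly in $j$, each $L_i$ is nonempty and $\max F_i > \min F_{i+1}$ for every $i = 1, \ldots, k$, so that the concatenation contributes exactly $k$ descents. The convenient sufficient conditions are $\min F_1 = j$ (which is deterministic), $\min F_i \leq n/2$ for $i \geq 2$, and $\max F_i \geq 3n/4$ for $i \geq 1$. Each bad event is controlled by a binomial tail of the form $(k/(k+1))^{cn}$: within any interval of length $\geq n/4$ that avoids the single value $j$, each other value is labelled $i$ independently with probability $\geq 1/(k+1)$, regardless of whether the interval sits to the left or to the right of $j$. A union bound over the $O(k)$ events then yields $\P(E_n^k(j)) \to 1$.

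The only point that demands attention is the uniformity in $j$: when $j$ is near the extremes $1$ or $n$, one of the intervals $\llbracket 1, j-1 \rrbracket$, $\llbracket j+1, n \rrbracket$ is small, so the estimates must not rely on both sides being macroscopic. My remedy is to always work inside the fixed intervals $\llbracket 1, \lfloor n/2 \rfloor \rrbracket \setminus \{j\}$ and $\llbracket \lceil 3n/4 \rceil, n \rrbracket \setminus \{j\}$, each of length at least $n/4 - 1$ regardless of $j$, and to systematically use the weaker lower bound $1/(k+1)$ on the per-value success probability. I do not anticipate any conceptual obstacle beyond this bit of bookkeeping; the main work is simply writing the estimates in a form that is uniform in $j$.
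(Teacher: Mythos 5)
Your proposal is correct and matches the paper's intent: the paper states only that the proof is a direct adaptation of the proof of Lemma~\ref{lem:permufix}, and your argument carries out exactly that adaptation. Your handling of the uniformity in $j$ — working in the fixed windows $\llbracket 1, \lfloor n/2 \rfloor \rrbracket \setminus \{j\}$ and $\llbracket \lceil 3n/4 \rceil, n \rrbracket \setminus \{j\}$ with the worst-case success probability $1/(k+1)$, and noting that $\min F_1 = j$ is automatic — is the one point the paper glosses over, and you address it correctly. The bijection argument for (ii), resting on the observation that for $v < j$ the element $v$ cannot lie in the first ascending run of a permutation starting with $j$, is also sound.
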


The proof of Lemma \ref{lem:permufixfirstentryfixed} is a direct adaptation of the proof of Lemma \ref{lem:permufix}. Different corollaries can be obtained from it.

\begin{cor}
\label{cor:degenerateperm}
\begin{itemize}
\item[(i)] For any fixed $k \geq 0$, uniformly in $j$, the number of permutations of $\kS_n$ with $k$ descents starting with $j$ is $k^{j-1} \, (k+1)^{n-j} (1+o(1))$.

\item[(ii)] For any $k$ and any $j$, the number of permutations of $\kS_n$ with $k$ descents starting with $j$ is at most $k^{j-1} \, (k+1)^{n-j}$.
\end{itemize}
\end{cor}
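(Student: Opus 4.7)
The plan is to follow the same recipe that gave Corollary~\ref{cor:fixeddescents} from Lemma~\ref{lem:permufix}, namely to realize the set of permutations in question as the image of an explicit injective encoding, then read off the cardinality bound directly and read off the asymptotic from Lemma~\ref{lem:permufixfirstentryfixed}(i).

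Concretely, first I would introduce the map
\[
g_{k,j}:\{2,\ldots,k+1\}^{j-1}\times\{1\}\times\{1,\ldots,k+1\}^{n-j}\ \longrightarrow\ \kS_n
\]
which sends a tuple $(a_1,\ldots,a_n)$ to the permutation $L_1 L_2\cdots L_{k+1}$ built as in the construction of $T_n^k(j)$ in Lemma~\ref{lem:permufixfirstentryfixed}. The domain of $g_{k,j}$ has cardinality exactly $k^{j-1}(k+1)^{n-j}$, and the map is injective because from the resulting permutation one can recover each $a_p$ as the index of the ascending run containing the value $p$.

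The second step is to check that every permutation $\tau\in\kS_n$ with $d(\tau)=k$ and $\tau(1)=j$ lies in the image of $g_{k,j}$. Given such a $\tau$, split it into its $k+1$ maximal ascending runs and set $a_p$ to be the index of the run containing $p$. Clearly $a_j=1$, since $\tau(1)=j$ is the first element of the first run. Moreover, the first run is listed in increasing order starting from $j$, so no value smaller than $j$ can belong to it; hence $a_p\in\{2,\ldots,k+1\}$ for every $p<j$. Therefore $(a_1,\ldots,a_n)$ lies in the domain of $g_{k,j}$ and recovers $\tau$. This surjectivity-onto-the-target is the only genuinely new thing to verify compared to the proof of Corollary~\ref{cor:fixeddescents}, and is the main (very mild) obstacle.

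Once these two facts are in place, part (ii) follows from $|\{\tau:d(\tau)=k,\tau(1)=j\}|\le|\mathrm{dom}(g_{k,j})|=k^{j-1}(k+1)^{n-j}$. For part (i), the construction in Lemma~\ref{lem:permufixfirstentryfixed} is precisely $g_{k,j}$ applied to the uniform measure on its domain, so Lemma~\ref{lem:permufixfirstentryfixed}(i) says that the preimage under $g_{k,j}$ of $\{\tau:d(\tau)=k\}$ has measure $1-o(1)$, uniformly in $j$. By injectivity, this preimage is in bijection with $\{\tau\in\kS_n:d(\tau)=k,\tau(1)=j\}$, so the cardinality of the latter is $k^{j-1}(k+1)^{n-j}(1+o(1))$ uniformly in $j$, which is (i).
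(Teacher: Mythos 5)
Your proposal is correct and takes essentially the same route as the paper, whose proof of this corollary is a one-liner appealing to the $T_n^k(j)$ construction and Lemma~\ref{lem:permufixfirstentryfixed}; you simply make the underlying encoding map $g_{k,j}$ explicit and spell out the counting.

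One small imprecision is worth flagging: $g_{k,j}$ is \emph{not} injective on its entire domain. If some of the sets $F_i$ are empty, the tuple cannot be recovered from the permutation---for instance with $j=1$, $k=2$, $n=4$, both $(1,1,1,2)$ and $(1,1,1,3)$ produce the identity permutation. Your recovery recipe (``$a_p$ is the index of the ascending run containing $p$'') only works when the image has exactly $k+1$ maximal ascending runs, i.e.\ exactly $k$ descents. Fortunately this is all you actually use: for (ii) you only need $\{\tau:d(\tau)=k,\tau(1)=j\}\subseteq\operatorname{Im}(g_{k,j})$, which gives the cardinality bound without any injectivity; and for (i) you apply injectivity only on the preimage of $\{\tau:d(\tau)=k\}$, where all blocks are forced to be nonempty and the recovery does go through. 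So the proof is sound once ``the map is injective'' is replaced by ``the map is injective on the preimage of permutations with exactly $k$ descents.'' (The paper's own proof of Corollary~\ref{cor:fixeddescents}(i) contains the same slight overstatement, and is likewise unaffected.)
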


\begin{proof}
The proofs of (i) and (ii) are immediate: the number of possible outcomes in the construction of $T_n^k(j)$ is $k^{j-1} \, (k+1)^{n-j}$, so that everything follows from Lemma~\ref{lem:permufixfirstentryfixed} (i) and (ii). 
\end{proof}

The next result shows concentration of the number of descents, uniformly in the value of the first element of the permutation.

\begin{cor}
\label{cor:degenerateperm2}
Let  $D_n$ be the number of descents in $S_n^{(q_n)}$, where $\frac{n}{\log(1/q_n)} \underset{n \to \infty}{\to} c$.
For any $\epsilon>0$, for $n$ large enough, uniformly in $j \in \llbracket 1,n \rrbracket$,
\begin{align*}
\P \left( D_n \notin \llbracket \lfloor c \rfloor - 1, \lfloor c \rfloor + 1  \rrbracket \, \Huge| \, S_n^{(q_n)}(1)=j \right) < \epsilon.
\end{align*}
\end{cor}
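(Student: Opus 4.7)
The plan is to adapt the weight-comparison argument of Proposition \ref{prop:concentration} to the conditional setting, using Corollary \ref{cor:degenerateperm} in place of Corollary \ref{cor:fixeddescents}. Set $k_0 = \lfloor c \rfloor$ and, for $k \geq 0$ and $j \in \llbracket 1, n\rrbracket$, let $w_{n,k,j}$ denote the total $q_n^{d(\tau)}$-weight of permutations $\tau \in \kS_n$ with $d(\tau) = k$ and $\tau(1) = j$, so that
\[
\P\bigl(D_n = k \,\big|\, S_n^{(q_n)}(1) = j\bigr) = \frac{w_{n,k,j}}{\sum_{k' \geq 0} w_{n,k',j}}.
\]
By Corollary \ref{cor:degenerateperm}, we have the universal upper bound $w_{n,k,j} \leq k^{j-1}(k+1)^{n-j} q_n^k$ together with the matching uniform-in-$j$ asymptotic $w_{n,k,j} = k^{j-1}(k+1)^{n-j}q_n^k(1+o(1))$ for each fixed $k \geq 1$. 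Consequently, the denominator is bounded below by $\tfrac{1}{2} k_1^{j-1}(k_1+1)^{n-j}q_n^{k_1}$ uniformly in $j$ for $n$ large, for any fixed integer $k_1 \in \{k_0-1, k_0, k_0+1\}$ with $k_1 \geq 1$.

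The plan is then to split the forbidden set into $k \geq k_0 + 2$ and $k \leq k_0 - 2$. For $k \geq k_0+2$, comparing $w_{n,k,j}$ to $w_{n,k_0+1,j}$ and writing $\alpha = j/n \in [0,1]$, using $\log(1/q_n) = n/c + o(n)$ gives
\[
\frac{1}{n} \log \frac{w_{n,k,j}}{w_{n,k_0+1,j}} \leq \phi_+(k, \alpha) + o(1),
\]
where
\[
\phi_+(k,\alpha) := \alpha \log \frac{k}{k_0+1} + (1-\alpha) \log \frac{k+1}{k_0+2} - \frac{k - k_0 - 1}{c}.
\]
Since $\phi_+$ is linear in $\alpha$, its supremum over $[0,1]$ is attained at an endpoint; at $k = k_0+2$, each endpoint value has the form $\log(1+1/m) - 1/c$ with $m \in \{k_0+1, k_0+2\}$, and the elementary inequality $\log(1+1/m) < 1/m \leq 1/(k_0+1) < 1/c$ (the last step uses $c < k_0+1$) yields $\phi_+(k_0+2, \alpha) \leq -\delta$ for some $\delta > 0$ and every $\alpha \in [0,1]$. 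Moreover, $\phi_+(\cdot, \alpha)$ is concave in $k$ with continuous maximum in $[\max(0, c-1), c]$, hence strictly decreasing on integers $k \geq k_0+2$, with gap from $0$ growing at least linearly in $k$. Consequently, $\sum_{k \geq k_0+2} e^{n\phi_+(k,\alpha)} \to 0$ uniformly in $\alpha$.

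For $k \leq k_0 - 2$ (which only occurs when $k_0 \geq 2$), an entirely symmetric argument applies, comparing instead to $w_{n,k_0-1,j}$ and invoking the reverse inequality $\log(1+1/m) > 1/(m+1)$ (which follows from $\log(1+x) > x/(1+x)$); this yields the analogous strict negativity of the corresponding $\phi_-(k_0-2, \cdot)$ uniformly in $\alpha$. Boundary cases $k_0 \leq 1$ need only the upper-tail argument, since $D_n \geq 0$ automatically forces $D_n \geq k_0 - 1$ then. Combining the two bounds yields the claimed $o(1)$ estimate on $\P(D_n \notin \llbracket k_0-1, k_0+1\rrbracket \mid S_n^{(q_n)}(1) = j)$ uniformly in $j$. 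The main technical obstacle is the uniform-in-$\alpha$ strict negativity of $\phi_\pm$ at $k = k_0 \pm 2$; this follows transparently from the linearity of $\phi_\pm$ in $\alpha$ reducing to two endpoint checks, coupled with the elementary logarithmic inequalities above.
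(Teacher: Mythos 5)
Your proposal is correct and follows essentially the same route as the paper's proof: both rely on Corollary~\ref{cor:degenerateperm} to express the conditional weight $w_{n,k,j}$ asymptotically as $k^{j-1}(k+1)^{n-j}q_n^k$, observe that the exponent is linear in $j$ (equivalently, affine in $\alpha=j/n$), and thereby obtain uniformity in $j$ by reducing to a check at the endpoints $\alpha\in\{0,1\}$; and both handle the infinite upper tail via the fact that the exponent decays at least linearly in $k$. The paper phrases this as a bound on $\partial f/\partial k$ locating $\argmax_k f(x,k)$ in $\{\lfloor c\rfloor-1, \lfloor c\rfloor, \lfloor c\rfloor+1\}$ independently of $x$, followed by a geometric-series bound for $k\geq\lfloor c\rfloor+2$; you instead introduce explicit comparison functions $\phi_\pm$ normalized by $w_{n,k_0\pm 1,j}$ and verify strict negativity at $k=k_0\pm 2$ via elementary logarithmic inequalities, but the underlying computation is the same. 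One small imprecision to flag: your claim $\frac1n\log(w_{n,k,j}/w_{n,k_0+1,j}) \le \phi_+(k,\alpha) + o(1)$ is not uniform in $k$, since the replacement $\log q_n = -n/c + o(n)$ contributes an error of size $(k-k_0-1)\cdot o(1)$. This does not sink the argument --- for large $k$ the term $-k/c$ in $\phi_+$ dominates, so one can first fix $n$ large enough that the $o(1)$ factor is below, say, $1/(2c)$, and then the linear decay still produces a convergent geometric tail --- but as written the uniformity of the $o(1)$ should be stated more carefully, or (as the paper does) one should work directly with the exact exponent in $\log q_n$ when summing over $k\geq k_0+2$.
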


\begin{proof}
Let us first prove that uniformly in $j$, for any $1 \leq k < \lfloor c \rfloor -1$, $\P(D_n^j = k) \to 0$ as $n \to \infty$, where $D_n^j$ denotes the number of descents in $S_n^{(q_n)}$ conditionally on $S_n^{(q_n)}(1)=j$. Observe that, by Corollary~\ref{cor:degenerateperm} (i), for fixed $k$, uniformly in $j$, the total weight of permutations of $\llbracket 1, n \rrbracket$ starting at $j$ with $k$ descents is
\begin{align*}
k^{j-1} \, (k+1)^{n-j} q_n^k (1+o(1)) &= \exp \big( (j-1) \log k + (n-j) \log(k+1) + k \log q_n + o(1) \big)\\
&= \exp \left( (j-1) \log \left(\frac{k}{k+1}\right) + (n-1) \log(k+1) - k n/c \left( 1 + o(1) \right) \right)\\
&= \exp \left(n \left( \frac{j}{n} \log \left(\frac{k}{k+1}\right) + \log(k+1) - \frac{k}{c} \right) + o(n) \right).
\end{align*}
Let $f(x,k) = x \log(k/(k+1)) + \log(k+1) - k/c$. Then, for all $x \in [0,1]$,
\begin{equation}
\label{eq:boundsonf}
\frac{\partial f}{\partial k} (x,k) \in \left[ \frac{1}{k+1}- \frac{1}{c}, \frac{1}{k}- \frac{1}{c} \right]
\end{equation}
so that $\argmax_k f(x,k) \in \llbracket \lfloor c \rfloor - 1, \lfloor c \rfloor + 1 \rrbracket$. Since the bounds in \eqref{eq:boundsonf} do not depend on $x$, we obtain indeed that, uniformly in $j$, for any $1 \leq k < \lfloor c \rfloor -1$, $\P(D_n^j = k) \to 0$ as $n \to \infty$.

Now let us prove that $\sup_{1 \leq j \leq n} \P \left( D_n \geq \lfloor c \rfloor + 2 \, \Huge| \, S_n^{(q_n)}=j \right) \to 0$. To this end, observe that for any $j$, by Corollary \ref{cor:degenerateperm} (ii), we have that the weighted sum of permutations of $\llbracket 1, n \rrbracket$ starting with $j$ and with at least $\lfloor c \rfloor + 2$ descents is at most

\begin{align*}
\sum_{k \geq \lfloor c \rfloor + 2} k^{j-1} (k+1)^{n-j} q_n^k &= \sum_{k \geq \lfloor c \rfloor + 2} \exp \big( h(j,k,n) \big),
\end{align*}
where $h(j,k,n) = (j-1) \log k + (n-j) \log(k+1) + k \log q_n$. Now we use the fact that, for all $j,n$,
\begin{align*}
\frac{\partial h}{\partial k}(j,k,n) = \frac{j-1}{k} + \frac{n-j}{k+1} + \log q_n \leq \frac{n}{k} + \log q_n.
\end{align*}

By definition of $q_n$, there exists $\epsilon(c)>0$ depending only on $c$ such that, for $n$ large enough and any $k \geq \lfloor c \rfloor +2$, for any $1 \leq j \leq n$, $\frac{\partial h}{\partial k}(j,k,n) < - n \epsilon(c)$. Therefore,
\begin{align*}
\sum_{k \geq \lfloor c \rfloor + 2} \exp \big( h(j,k,n) \big) &\leq \exp \big( h(j,\lfloor  c \rfloor + 2, n) \big) \sum_{k \geq 0} \exp \big({-} n \epsilon(c) k \big) \\
&= \exp \big(h(j,\lfloor  c \rfloor + 2,n)\big) \frac{1}{1-e^{-n\epsilon(c)}}.
\end{align*}
By the same argument as before, we get that $\exp \left(h(j,\lfloor  c \rfloor + 2,n)\right) = o \left( \exp \left(h(j,\lfloor  c \rfloor + 1,n)\right) \right)$, and the result follows.
\end{proof}

\subsection{Mesoscopic limit of a nondegenerate permutation}

We now apply these results to the study of a nondegenerate permutation. In what follows, we consider a sequence $(q_n)_{n \geq 1}$ satisfying $q_n \to 0$ and $\log q_n = o(n)$ as $n \to \infty$. 

\paragraph*{Number of descents in the first $\lfloor c \log(1/q_n) \rfloor$ elements.}

Fix $c>0$. The following holds by Corollary \ref{cor:degenerateperm2} and a time-reversing argument.

\begin{lemma}
\label{lem:numberofdescents}
Let  $D(x)$ denote the number of descents in the first $\lfloor x \rfloor$ elements of $S_n^{(q_n)}$. Then, with high probability,
\begin{align*}
D \left(c \log(1/q_n) \right) \in \llbracket \lfloor c \rfloor -1, \lfloor c \rfloor +1 \rrbracket.
\end{align*}
\end{lemma}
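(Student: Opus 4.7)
The plan is to reduce the lemma to Corollary \ref{cor:degenerateperm2} by examining the conditional distribution of the first $m := \lfloor c \log(1/q_n) \rfloor$ entries of $\sigma := S_n^{(q_n)}$ once we fix their value set $A = \{\sigma(1),\dots,\sigma(m)\}$ and the last value $a = \sigma(m)$. Let $\pi \in \kS_m$ denote the permutation encoding the relative order of $\sigma(1),\dots,\sigma(m)$, so that $D(c\log(1/q_n)) = d(\pi)$, and let $j_0$ be the rank of $a$ in $A$.

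The first step is to derive the conditional law of $\pi$ given $(A, a)$. Writing $B = \llbracket 1,n\rrbracket\setminus A$ and grouping each completion $(\sigma(m+1),\dots,\sigma(n))$ into the listing $\tau := (a,\sigma(m+1),\dots,\sigma(n))$ of $\{a\} \cup B$, one verifies directly the clean factorisation $d(\sigma) = d(\pi) + d(\tau)$: descents of $\sigma$ in positions $1,\dots,m-1$ correspond to descents of $\pi$, while the descent between positions $m$ and $m+1$ together with descents in positions $m+1,\dots,n-1$ match those of $\tau$. Summing $q_n^{d(\sigma)}$ over the admissible tails $\tau$ thus produces a factor $W(a,B)$ that does not depend on $\pi$. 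It follows that, conditionally on $(A,a)$, the permutation $\pi$ is distributed like $S_m^{(q_n)}$ conditioned on $S_m^{(q_n)}(m) = j_0$.

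The time-reversing step consists in applying the reverse-complement involution $\phi: \kS_m \to \kS_m$, $\phi(\pi)(i) = m+1-\pi(m+1-i)$, which preserves the descent count and the descent-biased distribution while mapping the event $\{\pi(m) = j_0\}$ to $\{\phi(\pi)(1) = m+1-j_0\}$. This identifies the conditional law of $d(\pi)$ given $\pi(m) = j_0$ with the conditional law of $d(S_m^{(q_n)})$ given $S_m^{(q_n)}(1) = m+1-j_0$. Since $m/\log(1/q_n) \to c$ and $m \to \infty$, Corollary \ref{cor:degenerateperm2} applies to $S_m^{(q_n)}$ (with $m$ playing the role of the size and $q_n$ playing the role of the biasing parameter), and yields that this conditional law is concentrated on $\llbracket \lfloor c \rfloor -1, \lfloor c \rfloor + 1 \rrbracket$ with probability $1 - \epsilon$, \emph{uniformly in the value of the conditioning entry}, hence uniformly in $j_0$.

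It then remains to average over $(A,a)$ to remove the conditioning; the uniformity in $j_0$ is precisely what makes this averaging step trivial, since we have no a priori handle on the distribution of $a$. I expect the main obstacle to be the careful verification of the first step, namely the decomposition $d(\sigma) = d(\pi) + d(\tau)$ and the resulting independence of $W(a,B)$ from $\pi$; once this is secured, the reverse-complement symmetry and the invocation of Corollary \ref{cor:degenerateperm2} are essentially routine bookkeeping.
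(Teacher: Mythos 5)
Your proof is correct and follows essentially the same route as the paper's. Your reverse-complement map $\phi$ is exactly the paper's auxiliary permutation $T_n$ (defined via rank-by-largest of the reversed first $m$ entries), and your conditioning on the pair $(A,a)$ is just a finer version of the paper's conditioning on the rank $j$ of $\sigma(m)$ within the first $m$ entries; the only genuine difference is that you spell out the factorisation $d(\sigma)=d(\pi)+d(\tau)$ and the resulting factor $W(a,B)$, which the paper leaves implicit in the assertion that $T_n$ conditionally has the descent-biased law with fixed first entry.
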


\begin{proof}[Proof of Lemma \ref{lem:numberofdescents}]
Let $E_c$ denote the event that $D \left(c \log(1/q_n) \right) \in \llbracket \lfloor c \rfloor -1, \lfloor c \rfloor +1 \rrbracket$ and, for any $1 \leq j \leq \lfloor c \log(1/q_n) \rfloor$, let $F_c^j$ be the event that $S_n^{(q_n)}(\lfloor c \log(1/q_n)\rfloor)$ is the $j$-th largest element among the first $\lfloor c \log(1/q_n)\rfloor$.
By the law of total probability, we have
\begin{align*}
\P\left( E_c \right) = \sum_{j=1}^{\lfloor c \log(1/q_n) \rfloor} \P \left( E_c \huge| F_c^j \right) \P \left( F_c^j \right).
\end{align*}

Now, consider the permutation $T_n \in \kS_{\lfloor c \log(1/q_n) \rfloor}$, defined as follows: for all $1 \leq \ell \leq \lfloor c \log(1/q_n) \rfloor$, $T_n(\ell)=m$ if $S_n^{(q_n)}(\lfloor c \log(1/q_n) \rfloor-\ell+1)$ is the $m$-th largest element among the first $\lfloor c \log(1/q_n) \rfloor$ of $S_n^{(q_n)}$. Then, for any $j$, conditionally on $F_c^j$, $T_n$ is distributed as $S_{\lfloor c \log(1/q_n) \rfloor}^{(q_n)}$ conditioned on $S_{\lfloor c \log(1/q_n) \rfloor}^{(q_n)}(1)= j$. Lemma \ref{lem:numberofdescents} follows by Corollary \ref{cor:degenerateperm2}, since $T_n$ and $S_n^{(q_n)}\left(\llbracket 1, \lfloor c \log(1/q_n) \rfloor \rrbracket \right)$ have the same number of descents.
\end{proof}

\paragraph*{Localization of the descents.}

For all $i \geq 1$, let $I_i$ denote the location of the $i$-th descent in $S_n^{(q_n)}$, that is, $I_1 := \inf \{ k \geq 1, S_n^{(q_n)}(k)> S_n^{(q_n)}(k+1) \}$ and $I_{i+1} := \inf \{ k \geq I_i+1, S_n^{(q_n)}(k)> S_n^{(q_n)}(k+1) \}$.
It turns out that these descents are localized, in the following sense:

\begin{lemma}
\label{lem:localizationfirstjump}
For all $i \geq 1$, we have the following convergence in probability:
\begin{align*}
\frac{I_i}{\log(1/q_n)} \underset{n \to \infty}{\overset{\P}{\to}} i.
\end{align*}
\end{lemma}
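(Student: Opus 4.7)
The plan is to quantify the positions of descents within a window of size $M := \lfloor c\log(1/q_n) \rfloor$ in $S_n^{(q_n)}$, by combining Lemma~\ref{lem:numberofdescents} (concentration of the \emph{number} of descents) with Lemma~\ref{lem:permufixfirstentryfixed} (describing the block structure of degenerate permutations with fixed first element). Fix a large integer $c$. By Lemma~\ref{lem:numberofdescents}, the descent count $k := D(M)$ of the window lies in $\{c-1, c, c+1\}$ with high probability.

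The central step is to show that, conditionally on this event and uniformly in the rank $j$ of $S_n^{(q_n)}(M)$ among the first $M$ elements of $S_n^{(q_n)}$, one has
\begin{equation*}
I_i = \frac{iM}{k+1} + O(M/k^2) + O_p(\sqrt{M}).
\end{equation*}
To this end, I would apply the rank-reversal trick from the proof of Lemma~\ref{lem:numberofdescents}: the rank-reversed window $T_n$ is distributed as $S_M^{(q_n)}$ conditioned on starting with value $j$, which by Lemma~\ref{lem:permufixfirstentryfixed} coincides (up to a high-probability event) with the multinomial construction $T_M^k(j)$. Since the rank-reversal swaps the ordering of descents, the $i$-th descent of $\sigma := S_n^{(q_n)}$ corresponds to the $(k-i+1)$-th descent of $T_n$, located at position $|L_1| + \cdots + |L_{k-i+1}|$. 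The block sizes $|L_1|, \ldots, |L_{k+1}|$ decompose as sums of independent binomials with means close to $M/(k+1)$, and a Chernoff bound controls their fluctuations by $O_p(\sqrt{M})$. Summing the relevant means and subtracting from $M$ yields the displayed formula.

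Dividing by $\log(1/q_n)$ and using $M = c\log(1/q_n)$ gives
\begin{equation*}
\frac{I_i}{\log(1/q_n)} = \frac{ic}{k+1} + O(1/c) + o_p(1).
\end{equation*}
As $k$ ranges over $\{c-1, c, c+1\}$, the nominal term $ic/(k+1)$ lies within $2i/c$ of $i$. For any $\epsilon > 0$, choosing $c$ large enough that the total deterministic error is below $\epsilon/2$ yields $\P(|I_i/\log(1/q_n) - i| > \epsilon) \to 0$ as $n \to \infty$, which is the desired convergence in probability.

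The main obstacle is the uniform (in $j$) control on the block-size concentration of $T_M^k(j)$. The block $L_1$ plays a distinguished role: it is forced to contain position $j$, while the $j-1$ preceding positions cannot carry label $1$, so its mean differs from $M/(k+1)$ by an amount depending on $v := j/M$. For $k$ bounded by Lemma~\ref{lem:numberofdescents}, this $v$-dependence contributes only an $O(M/k^2)$ correction to $I_i$ (negligible once divided by $\log(1/q_n)$ and $c$ is sent to infinity), and a Chernoff bound delivers the $O_p(\sqrt{M})$ fluctuation uniformly in $j$. Once this is in place, the rest of the argument is direct computation.
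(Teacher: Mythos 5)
Your proposal is correct and follows the paper's approach: both pin the window's descent count $k$ via Lemma~\ref{lem:numberofdescents}, use the rank-reversal device from its proof together with Lemma~\ref{lem:permufixfirstentryfixed} to reduce to the block construction $T_M^k(j)$, and then exploit binomial concentration to place $I_i$ near $iM/(k+1)$, with the remaining deterministic error vanishing as $c\to\infty$. The paper phrases the concentration step as a high-probability stochastic sandwich $B_{k+1}^c \leq I_1 \leq B_k^c$ (and analogously for general fixed $i$), while you compute block means and invoke Chernoff explicitly; these are the same estimate, and your $O(M/k^2)$ accounting of the $j$-dependence of the block means is a correct (slightly more detailed) version of the uniformity input the paper needs.
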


\begin{proof}[Proof of Lemma \ref{lem:localizationfirstjump}]
We know from Lemma \ref{lem:numberofdescents} that, with high probability, the number of descents in $S_n^{(q_n)}(\llbracket 1, \lfloor c\log(1/q_n) \rfloor \rrbracket)$ is in $\llbracket \lfloor c \rfloor -1, \lfloor c \rfloor +1 \rrbracket $. Now, we condition on this number of descents $k$. Let $B_k^c$, $B_{k+1}^c$ be two random variables such that $B_k^c \sim \Bin(\lfloor c \log(1/q_n)\rfloor,1/k)$ and $B_{k+1}^c \sim \Bin(\lfloor c \log(1/q_n)\rfloor,1/(k+1))$. Letting $\tilde{E}_j^k$ be the event that $S_n^{(q_n)}(\lfloor c \log(1/q_n) \rfloor)$ is the $j$-th largest among the first $\lfloor c \log (1/q_n) \rfloor$ elements, we obtain by Lemma \ref{lem:permufixfirstentryfixed} that, for all $j$, conditionally on $\tilde{E}_j^k$, there exists a coupling between $B_k^c, I_1$ and $B_{k+1}^c$ such that, with high probability as $n \to \infty$ (uniformly in $j$):

\begin{equation*}
B_{k+1}^c \leq I_1 \leq B_k^c.
\end{equation*} 

In particular, for any $\epsilon > 0$, taking $c$ large enough, we get that with high probability,

\begin{align*}
\left|\frac{I_1}{\log(1/q_n)} - 1\right| < \epsilon.
\end{align*}
This implies the result for $I_1$, and the result for each fixed $i$ is proved in the same way.
The result follows.
\end{proof}

\paragraph*{Distribution of the first elements of $S_n^{(q_n)}$.}

We are finally able to prove the main result in this section. 
%
%
%
%
%
%
In what follows, we make no distinction between a permutation $\sigma \in \kS_n$ and the c\`{a}dl\`{a}g function $f_\sigma$ defined as
\begin{align*}
f_\sigma(x) = \sigma(1 + \lfloor x \rfloor) \text{ for all } x \in [0,n).
\end{align*}

We also endow the set of càdlàg functions on an interval with the usual $J_1$ Skorokhod topology (see \cite[Chapter $VI$]{JS03} for details).

%

%

\begin{theorem}
\label{thm:distribution}
We have, with respect to the Skorokhod topology, for any non-integer $c \in \R_+$:
\begin{align*}
\left( \frac{S_n^{(q_n)}(\lfloor t \log(1/q_n) \rfloor +1)}{n} \right)_{0 \leq t \leq c} \underset{n \to \infty}{\overset{(d)}{\to}} \left( \{ t \} \right)_{0 \leq t \leq c}.
\end{align*}
\end{theorem}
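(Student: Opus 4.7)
The plan is to combine the localization of descents (Lemma~\ref{lem:localizationfirstjump}) with uniform concentration of the order statistics of the values within each increasing run, and then to deduce Skorokhod $J_1$-convergence via a time change. Since $c$ is non-integer, $\lfloor c \rfloor < c < \lfloor c \rfloor + 1$, so applying Lemma~\ref{lem:localizationfirstjump} jointly for $i = 1, \ldots, \lfloor c \rfloor + 1$ yields $I_i/\log(1/q_n) \to i$ in probability, and in particular, with high probability, the window $\llbracket 1, \lfloor c \log(1/q_n) \rfloor \rrbracket$ contains exactly the $\lfloor c \rfloor$ descents $I_1 < \cdots < I_{\lfloor c \rfloor}$. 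Writing $I_0 := 0$ and $L_i := I_i - I_{i-1}$ for the block lengths, we also obtain $L_i / \log(1/q_n) \to 1$ in probability for $1 \leq i \leq \lfloor c \rfloor$.

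Next, I would establish a uniform concentration estimate within each complete block: for every $1 \leq i \leq \lfloor c \rfloor$,
\begin{align*}
\sup_{1 \leq j \leq L_i} \left| \frac{S_n^{(q_n)}(I_{i-1}+j)}{n} - \frac{j}{L_i} \right| \underset{n \to \infty}{\overset{\P}{\to}} 0.
\end{align*}
To prove this, one conditions on the total number of descents $D_n$ of $S_n^{(q_n)}$, which concentrates around $n/\log(1/q_n)$ by an adaptation of Proposition~\ref{prop:concentration} to the nondegenerate regime. Conditional on $D_n = K$, $S_n^{(q_n)}$ is uniform among permutations of $\llbracket 1, n \rrbracket$ with $K$ descents, and an extension of the coloring construction from Lemma~\ref{lem:permufix} implies that the values inside each block are approximately a uniformly random subset of $\llbracket 1, n \rrbracket$ of the corresponding (random) size. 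The displayed estimate then follows from standard order statistic concentration (e.g.\ via the Dvoretzky--Kiefer--Wolfowitz inequality applied to the empirical distribution of such a subset), since the $j$-th order statistic deviates from $jn/L_i$ by $O(n/\sqrt{L_i}) = o(n)$ uniformly in $j$.

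To conclude, the Skorokhod $J_1$-convergence follows by combining the two previous steps. For $k = I_{i-1}+j$ with $1 \leq j \leq L_i$ and $t = k/\log(1/q_n)$, one has
$$\frac{S_n^{(q_n)}(k)}{n} \approx \frac{j}{L_i} = \Big(t - \frac{I_{i-1}}{\log(1/q_n)}\Big) \cdot \frac{\log(1/q_n)}{L_i},$$
which converges locally uniformly on each open interval $(i-1, i)$ to $t - (i-1) = \{t\}$. Together with the convergence of the jump locations $I_i/\log(1/q_n) \to i$ and the observation that the jumps are of size approximately $1$ (the block ends near $n$ and the next one starts near $0$, both after scaling), the $J_1$-convergence is then obtained through the time change $\lambda_n : [0, c] \to [0, c]$ that sends each $I_i/\log(1/q_n)$ to $i$ and is interpolated linearly.

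The main obstacle will be the extension in the second step of the coloring construction of Lemma~\ref{lem:permufix} to the regime where the number of descents $K = D_n$ grows like $n/\log(1/q_n)$: the event $E_n^K$ there must be controlled uniformly across $K+1$ colors, and the bounds degrade as $K \to \infty$. A possible workaround is to only describe the first $\lfloor c \log(1/q_n) \rfloor$ positions of $S_n^{(q_n)}$ via a coloring, relying on a conditional argument that exploits the approximate Markov structure of the descent-biased measure to decouple the window from its complement, with boundary corrections whose contribution vanishes in the limit.
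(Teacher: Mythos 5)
Your outline — localize the descents via Lemma~\ref{lem:localizationfirstjump}, show uniform within-block concentration, deduce $J_1$-convergence by a time change sending $I_i/\log(1/q_n)$ to $i$ — is the right picture, and your computation of the sawtooth limit $\{t\}$ from the block structure is correct. The gap is in step two: you condition on $D_n = K$ and invoke the coloring construction of Lemma~\ref{lem:permufix}, but that lemma (and Corollary~\ref{cor:fixeddescents}) is proved only for \emph{fixed} $k$. In the nondegenerate regime $K$ must be of order $n/\log(1/q_n) \to \infty$, and the success event $E_n^K$ is no longer asymptotically full: with $K+1 \approx n/\log(1/q_n)$ colors each of expected size $\approx\log(1/q_n)$, the probability that a given color class is empty is $\approx q_n$, so the expected number of empty classes is of order $n q_n / \log(1/q_n)$, which diverges whenever $q_n \gg \log n / n$ (e.g.\ $q_n = n^{-1/2}$). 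Hence $\P(E_n^K) \to 0$ on a large part of the regime, and uniformity of $D_n^K$ on $\kD_n^K$ conditional on $E_n^K$ does not transfer directly to the claim that the first few color classes are approximately uniform subsets; one would need a separate argument that conditioning on the increasingly unlikely event $E_n^K$ leaves the first $\lfloor c\rfloor+1$ classes asymptotically undisturbed. You would also need concentration of $D_n$ at scale $n/\log(1/q_n)$, which requires extending Proposition~\ref{prop:concentration} and Corollary~\ref{cor:fixeddescents} to $k \to \infty$; neither is available in the paper, and you do not supply it. You do flag this obstacle, but the proposed workaround is left as a gesture.

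The paper sidesteps any global coloring by a one-block renewal argument. It conditions on $I_1$ and on the set $\{S_n^{(q_n)}(1),\ldots,S_n^{(q_n)}(I_1)\}$, and uses that $S_n^{(q_n)}(\llbracket I_1+1,n\rrbracket)$, read up to the increasing relabelling, is exactly an independent descent-biased permutation $\tilde S_{n-I_1}^{(q_n)}$ conditioned only on its first entry being smaller than $S_n^{(q_n)}(I_1)$. Theorem~\ref{thm:mainnondegenerate} and Lemma~\ref{lem:localizationfirstjump} give $S_n^{(q_n)}(I_1) \gg n/\log(1/q_n)$ with high probability while $\tilde S_{n-I_1}^{(q_n)}(1)$ lives on scale $n/\log(1/q_n)$, so this conditioning is nearly full-measure and imposes negligible bias on the set of pre-$I_1$ values, which is therefore asymptotically a uniform $I_1$-subset of $\llbracket 1,n\rrbracket$. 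Combined with $I_1 > c\log(1/q_n)$ w.h.p.\ for $c<1$, the order statistics of a uniform subset give uniform convergence to $t$ on $[0,c]$, and the argument is iterated block by block for any non-integer $c$. To repair your proof, replace the global coloring step by this renewal conditioning; the localization, order-statistic, and time-change steps can then be retained unchanged.
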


The proof uses a sort of renewal argument.

\begin{proof}[Proof of Theorem \ref{thm:distribution}]
Let us first prove the result for $c<1$. Let $\tilde{S}_n^{(q_n)}$ denote a permutation with the same distribution as $S_n^{(q_n)}$, and independent of it.  Conditionally on $I_1$, it is clear that $S_n^{(q_n)}(\llbracket I_1+1,n\rrbracket)$ is distributed as $\tilde{S}_{n-I_1}^{(q_n)}$ (up to increasing bijection from the range of $S_n^{(q_n)}(\llbracket I_1+1,n\rrbracket)$ to $\llbracket 1, n-I_1 \rrbracket$), under the event that $S_n^{(q_n)}(I_1)>S_n^{(q_n)}(I_1+1)$.

We condition on the value of $I_1$, and on the set of values of the first $I_1$ elements of $S_n^{(q_n)}$.  Fix $\varepsilon>0$. By Theorem \ref{thm:mainnondegenerate} and Lemma \ref{lem:localizationfirstjump}, we know that, with high probability, $S_n^{(q_n)}(I_1) \gg \frac{n}{\log(1/q_n)}$ and that there exists $C>0$ such that $\P(\tilde{S}_{n-I_1}^{(q_n)}(1)\leq C\frac{n}{\log(1/q_n)}) <\varepsilon$. Therefore, with high probability, conditionally on $I_1$, the range of $S_n^{(q_n)}(\llbracket 1, I_1 \rrbracket)$ is asymptotically distributed as a uniform set of $I_1$ elements of $\llbracket 1, n \rrbracket$. Furthermore, by Lemma \ref{lem:localizationfirstjump}, with high probability $I_1 > c \log(1/q_n)$. The result follows, and can be extended in the same way to any non-integer value of $c$.
\end{proof}

\section{Generating functions for descent-biased trees}\label{sec:genfun_trees}

In this and the following section, we study the model of descent-biased random trees $\cT_n^{(q)}$ described in the introduction. First, we are interested in generating functions associated with this model. The enumeration of labelled trees by descents and the study of associated generating functions goes back to a paper of E{\u{g}}ecio{\u{g}}lu and Remmel \cite{ER86}, where the number of descents is considered jointly with other statistics.

We will use generating functions to determine the limiting distribution of the root degree for every fixed $q$, which is the first step in characterizing the local limit. Moreover, we determine the asymptotic behaviour of the mean of the path length, which is the sum of all distances to the root. While interesting in its own right, this is also relevant as an auxiliary tool for the proof of Lemma~\ref{lem:heightn}, where we obtain bounds on the height of the tree $\cT_n^{(q)}$.  The information that we gain from the analysis of generating functions will be combined with our results on descent-biased permutations to prove a local limit theorem in the following section. 

Recall that we consider the set $\kT_n$ of rooted trees (that is, trees with a marked vertex), non-plane, with vertices labelled from $1$ to $n$, where $n$ denotes the number of vertices in the tree. An edge is a \textit{descent} if the parent label is greater than the child label. For any tree $T$, we write $d(T)$ for the number of descents in $T$. Recall also that, for $q > 0$ and $n \geq 1$, the random tree $\cT_n^{(q)}$ is the random variable on $\kT_n$ satisfying, for any $T \in \kT_n$,
\begin{align*}
\P \left( \cT_n^{(q)} = T \right) = \frac{1}{Y_{n,q}} q^{d(T)},
\end{align*}
where $Y_{n,q} := \sum_{T \in \kT_n} q^{d(T)}$.

\subsection{The ordinary generating function}

 Let us first define the generating function of labelled trees with a weight $q^{d(T)}$, i.e.,
$$A(x,q) = \sum_{n \geq 1} \sum_{T \in \kT_n} q^{d(T)} \frac{x^n}{n!}.$$
We decompose a labelled tree $T$ according to the vertex labelled $1$: if this vertex is the root, then the rest can be regarded as a set of smaller labelled trees $T_1,T_2,\ldots,T_k$, with the number of descents in the different subtrees adding up to the number of descents in $T$ (the root does not induce any additional descents in this case). Otherwise, we can decompose $T$ (with vertex $1$ removed) into a part $R$ containing the root and the parent of vertex $1$ (which may be identical) on the one hand, and the subtrees $T_1,T_2,\ldots,T_k$ rooted at the children of vertex $1$ on the other hand. The number of descents in $T$ is obtained by summing the number of descents over all these parts, and adding $1$ for the descent induced by vertex $1$ and its parent. Combining these two cases leads to the generating function identity
\begin{equation}\label{eq:A-diffeq}
\frac{\partial}{\partial x} A(x,q) = e^{A(x,q)} + qx e^{A(x,q)} \frac{\partial}{\partial x} A(x,q),
\end{equation}
which can be rewritten as
\begin{equation}\label{eq:A_x-expr}
\frac{\partial}{\partial x} A(x,q) = \frac{e^{A(x,q)}}{1-qx e^{A(x,q)}}.
\end{equation}
We solve this differential equation by first setting $f(x) = xe^{A(x,q)}$ (ignoring the dependence on $q$ for the moment). Since
$$f'(x) = e^{A(x,q)} + x e^{A(x,q)} \frac{\partial}{\partial x} A(x,q) = \frac{f(x)}{x} \Big( 1 + x \frac{\partial}{\partial x} A(x,q) \Big),$$
this gives us
$$f'(x) = \frac{f(x)}{x} \Big( 1 + \frac{f(x)}{1-qf(x)} \Big).$$
This differential equation is separable and can be solved by standard methods. One obtains the implicit equation
$$f(x) (1 + (1-q)f(x))^{-1/(1-q)} = Cx$$
for some constant $C$. Writing $f(x) = xe^{A(x,q)}$ and simplifying (as well as using the fact that $A(0,q) = 0$ to determine $C$), this eventually leads to
\begin{equation}\label{eq:impl_eq_A}
x = \frac{e^{-qA(x,q)} - e^{-A(x,q)}}{1-q}.
\end{equation}
Note that it is possible to determine the explicit formula
$$[x^n] A(x,q) = \frac{1}{n!} \prod_{k=1}^{n-1} (n-k+kq)$$
(which also follows by specialization from the results in \cite{ER86}) from this implicit equation by means of Lagrange inversion, but this will not be relevant to us. We are rather interested in the singularities of $A(x,q)$ for fixed $q > 0$ in order to be able to apply singularity analysis to $A(x,q)$ and related generating functions. Note that $x \mapsto A(x,q)$ is the inverse function of the entire function $a \mapsto F(a) = \frac{e^{-qa}-e^{-a}}{1-q}$ (if $q = 1$, we take the limit $F(a) = a e^{-a}$) that satisfies $A(0,q) = 0$. By the analytic implicit function theorem, it can be continued analytically at every complex $x$ as long as the derivative of $F$ does not vanish at $A(x,q)$. This derivative is
$$F'(a) = \frac{1}{1-q} \Big( e^{-a} - q e^{-qa} \Big)$$
(if $q=1$, it is $(1-a)e^{-a}$), which vanishes at points of the form $\frac{\log q}{q-1} + \frac{2k \pi i}{q-1}$ (or only at $1$ if $q = 1$). So we reach a singularity when $A(x,q) =  \frac{\log q}{q-1}$ and accordingly $x = q^{q/(1-q)}$ (for $q=1$, we need to have $A(x,1) = 1$ and $x = e^{-1}$ respectively). Let us denote this singularity by $x_0$: $x_0:= q^{q/(1-q)}$ (if $q = 1$, this is interpreted as $e^{-1}$). This is in fact the dominant singularity: at any positive real $x < x_0$, $A(x,q)$ can be continued analytically by the analytic implicit function theorem. By Pringsheim's theorem (as $A(x,q)$ has only nonnegative coefficients), this already means that it is analytic in the open disk of radius $x_0$ around $0$. For every complex $x$ with $|x| = x_0$ but $x \neq x_0$, we have $|A(x,q)| < A(x_0,q) = \frac{\log q}{q-1}$ by the triangle inequality, thus in particular $A(x,q) \neq \frac{\log q}{q-1} + \frac{2k \pi i}{1-q}$ for all integers $k$. So $A(x,q)$ can also be continued analytically at all these points.

By standard results on implicitly defined functions (see \cite[Chapter VI.7]{FS09}), $A(x,q)$ is amenable to singularity analysis with a square root singularity at $x_0$:

\begin{lemma}\label{lem:analytic_properties_A}
For every fixed $q > 0$, there exists $\delta > 0$ such that $A(x,q)$ is analytic in a domain of the form
$$\{x \in \C \,:\, |x| < x_0 + \delta, x \notin[x_0,x_0+\delta)\},$$
and $A(x,q)$ has the following asymptotic expansion at $x_0$:
$$A(x,q) = \frac{\log q}{q-1} - \sqrt{\frac{2}{q}} \cdot \sqrt{1-x/x_0} + O \big( |x-x_0| \big).$$
\end{lemma}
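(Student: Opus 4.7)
The plan is to combine the analytic continuation statements already sketched just before the lemma with a local inversion argument at the critical point of $F$. Recall that $A(x,q)$ is defined implicitly via $x = F(A(x,q))$ with $F(a) = (e^{-qa}-e^{-a})/(1-q)$, and that the discussion preceding the lemma shows: $A(\cdot,q)$ is analytic in the open disk of radius $x_0$, and at every point $x$ with $|x|=x_0$ and $x \neq x_0$ one has $F'(A(x,q)) \neq 0$, so the analytic implicit function theorem produces a local analytic continuation.

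First I would promote these pointwise continuations to a genuine $\Delta$-style domain. A standard compactness argument on the arc $\{|x|=x_0\} \setminus \{x_0\}$ gives a uniform lower bound on the radius of the disks in which each local continuation is valid, and these continuations agree on overlaps because they all solve $x = F(A(x,q))$ and coincide with $A$ on the interior. Hence there exists $\delta>0$ such that $A(x,q)$ extends analytically to $\{|x| < x_0+\delta\} \setminus [x_0, x_0+\delta)$, which is exactly the domain claimed in the lemma.

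Next I would derive the singular expansion at $x_0$ by Taylor expanding $F$ at $a_0 := \log q/(q-1)$. A direct computation gives $F(a_0) = x_0$, $F'(a_0) = 0$, and
\begin{equation*}
F''(a_0) = \frac{q^2 e^{-qa_0} - e^{-a_0}}{1-q} = \frac{q^{q/(1-q)}(q^2-q)}{1-q} = -q\,x_0,
\end{equation*}
so that $F(a) = x_0 - \tfrac{q x_0}{2}(a-a_0)^2 + O((a-a_0)^3)$. Solving for $a-a_0$ in the equation $x = F(a)$ and picking the correct branch of the square root (the one for which $a \uparrow a_0$ as $x \uparrow x_0$ along the real axis, which is forced by the positivity of the coefficients of $A$) gives
\begin{equation*}
A(x,q) - a_0 = -\sqrt{\tfrac{2}{q}}\,\sqrt{1-x/x_0}\,\bigl(1 + O(\sqrt{1-x/x_0})\bigr),
\end{equation*}
which yields the stated expansion after absorbing the correction term into the $O(|x-x_0|)$ remainder.

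The main (and essentially only) obstacle is the branch selection at $x_0$: one has to check that the square root with the negative sign is the one that matches the power series $A(x,q)$ analytically continued from the origin. This is forced by monotonicity of $A(\cdot,q)$ on $[0,x_0)$ together with the fact that $A(x_0,q) = a_0$, but it should be stated explicitly. Everything else is routine from the analytic implicit function theorem and the standard local inversion of a function with a simple critical point.
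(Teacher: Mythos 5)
Your proof is correct and follows essentially the same route as the paper, which simply cites the standard implicitly-defined-functions theorem from Flajolet--Sedgewick (Chapter~VI.7) after verifying the hypotheses; you fill in the local inversion of $F$ at its simple critical point $a_0 = \log q/(q-1)$. The coefficient computation $F''(a_0) = -qx_0$, the resulting expansion $1-x/x_0 = \tfrac{q}{2}(a-a_0)^2(1+O(a-a_0))$, and the branch selection via monotonicity of $A$ on $[0,x_0)$ are all correct.

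One small imprecision in your first step: $\{|x|=x_0\}\setminus\{x_0\}$ is open, not compact, so a direct compactness argument on it does not yield a uniform lower bound on continuation radii (they could degenerate as $x \to x_0$ along the circle). The standard fix is to split: for a small arc around $x_0$, use the local inversion of $F$ at the simple critical point (the same computation that gives the square-root expansion) to obtain analyticity in a slit disk around $x_0$; away from $x_0$ the circle minus that small arc \emph{is} compact, and there the non-vanishing of $F'(A(x,q))$ gives continuation radii bounded below. Gluing these gives the $\Delta$-domain. This is precisely the content of the theorem the paper cites, so the gap is cosmetic rather than structural.
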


Consequently, singularity analysis \cite[Theorem VI.4]{FS09} yields
\begin{equation}\label{eq:Axq_coeff_asymp}
[x^n] A(x,q) \sim \frac{1}{\sqrt{2\pi q}} n^{-3/2} x_0^{-n}
\end{equation}
for every fixed $q > 0$. For our purposes, we will need a more general statement involving powers of $A(x,q)$. This is provided in the following proposition.

\begin{proposition}\label{prop:power_coefficients}
Let $q \in (0,1)$ be fixed. There exists an analytic function $h$ in a neighbourhood of $0$ with $h(0) = 0$ such that the coefficient of $x^n$ in $A(x,q)^m$ is asymptotically given by
$$[x^n] A(x,q)^m \sim \frac{m(1-q)^{n+1-m}(\log \frac1{q})^{m-1} e^{-m h(m/n)}(q^{qe^{-h(m/n)}/(1-q)} - q^{e^{-h(m/n)}/(1-q)})^{-n}}{n^{3/2}\sqrt{2\pi q}}$$
provided that $m = o(n)$. This holds uniformly in $m$ for all $m \in \llbracket 1, M(n) \rrbracket$ if $M(n) = o(n)$.

As a consequence, if $n \to \infty$ and $m = o(n)$, we have 
$$\frac{[x^{n+1}] A(x,q)^m}{[x^{n}] A(x,q)^m} \to q^{-q/(1-q)},$$
and more generally 
$$\frac{[x^{n+k}] A(x,q)^m}{[x^{n}] A(x,q)^m} \to q^{-qk/(1-q)}$$
for every fixed $k$. Moreover, if $n,m \to \infty$ with $m = o(n)$, we have
$$\frac{[x^{n}] A(x,q)^{m-1}}{[x^{n}] A(x,q)^m} \to \frac{1-q}{\log(1/q)}.$$
Again, these statements hold uniformly in $m$ if $m \in \llbracket M_1(n), M_2(n) \rrbracket$, where $M_1(n) \to \infty$ and $M_2(n) = o(n)$.
\end{proposition}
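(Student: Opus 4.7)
The plan is to combine Lagrange inversion with the saddle-point method. First I would apply Lagrange inversion to the relation $x = F(a)$ with $F(a) = (e^{-qa}-e^{-a})/(1-q)$ and $A(\,\cdot\,,q) = F^{-1}$, obtaining
\[ [x^n] A(x,q)^m = \frac{m}{n}\,[a^{n-m}]\Big(\frac{a}{F(a)}\Big)^n. \]
Writing the right-hand side as a Cauchy integral with phase $\Phi(a) = m\log a - n\log F(a)$, the saddle-point equation $\Phi'(a) = 0$ reduces to $aF'(a)/F(a) = m/n =: u$. At $u = 0$ the relevant solution is the critical point $a_0 = \log(1/q)/(1-q)$ of $F$. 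A short calculation using $F''(a_0) = -e^{-a_0}$ and $e^{-a_0}/x_0 = q$ gives that the $a$-derivative of $aF'(a)/F(a)$ at $a_0$ equals $-a_0 q \neq 0$, so the analytic implicit function theorem yields a real-analytic solution $a^*(u)$ near $u = 0$ with $a^*(0) = a_0$. I would then define $h(u) := -\log(a^*(u)/a_0)$; this is analytic with $h(0) = 0$, and it is the function whose existence is asserted in the proposition.

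A direct computation with $\phi(a) := a/F(a)$ yields $\phi''(a_0)/\phi(a_0) = q$, whence $\Phi''(a^*)/n \to q$ as $u \to 0$. The standard saddle-point approximation then produces
\[ [a^{n-m}](a/F(a))^n \sim \frac{(a^*)^{m-1}}{F(a^*)^n \sqrt{2\pi n q}} \]
uniformly for $m = o(n)$. Multiplying by the Lagrange prefactor $m/n$, rewriting $(a^*)^{m-1} = (a^*)^m e^{h(u)}/a_0$ (the factor $e^{h(u)}$ is $1+o(1)$ since $h(u) \to 0$ when $m = o(n)$), and substituting $a^* = a_0 e^{-h(u)}$, $e^{-qa^*} = q^{qe^{-h(u)}/(1-q)}$, $e^{-a^*} = q^{e^{-h(u)}/(1-q)}$, and $a_0 = \log(1/q)/(1-q)$ throughout, the expression rearranges into the precise form stated in the proposition.

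The main technical obstacle is the uniformity in $m$: as $u \to 0$ the saddle $a^*$ approaches the critical point $a_0$ of $F$, which is the transitional regime between the classical saddle-point method and singularity analysis. I would handle this by integrating on a circle of radius $a^*(u)$ and establishing explicit uniform bounds on $|\phi(a^* e^{i\theta})/\phi(a^*)|$ for $\theta$ bounded away from $0$, verifying that the Gaussian main contribution dominates uniformly as long as $m \leq M(n) = o(n)$. A fallback is a patching argument: singularity analysis around $x_0$ for slowly growing $m$, classical saddle point for larger $m$, and checking consistency on the overlap.

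The corollary ratios follow immediately from the asymptotic formula. Since $m = o(n)$ forces $a^*(m/n) \to a_0$, the ratio $[x^{n+k}] A(x,q)^m / [x^n] A(x,q)^m$ is asymptotic to $F(a^*)^{-k}$, which tends to $F(a_0)^{-k} = x_0^{-k} = q^{-qk/(1-q)}$ for every fixed $k$. For the last ratio, the saddle points $a^*(m/n)$ and $a^*((m-1)/n)$ differ by $O(1/n)$ and both tend to $a_0$, so the ratio of the $F(a^*)^n$ factors is $1+o(1)$; the remaining prefactor ratio is $((m-1)/m)(a^*)^{-1} \to 1/a_0 = (1-q)/\log(1/q)$ as $m \to \infty$.
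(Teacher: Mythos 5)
Your proposal follows essentially the same route as the paper: the Lagrange inversion $[x^n]A^m = \frac{m}{n}[a^{n-m}](a/F(a))^n$ yields exactly the same contour integral $\frac{m(1-q)^n}{n}\cdot\frac{1}{2\pi i}\oint \frac{z^{m-1}}{(e^{-qz}-e^{-z})^n}\,dz$ that the paper reaches via the derivative identity and the substitution $z=A(x,q)$, the saddle-point equation and the definition of $h$ via $a^*(u) = a_0 e^{-h(u)}$ are identical, and the local Gaussian computation and the derivation of the three corollary ratios match. The one place the paper is more careful is the choice of contour: because $F(a) = (e^{-qa}-e^{-a})/(1-q)$ has alternating Taylor coefficients, the minimality of $|F(a)|$ on the circle $|a|=a^*$ at $\theta=0$ is not automatic (unlike the positive-coefficient case), so the paper instead takes $\cC'$ to be the preimage under $F$ of a contour in the $x$-plane consisting of the image $\tilde{\cA}$ of a short arc plus an arc of a circle of radius $\ge x_0(1+\epsilon)$, which gives the needed uniform bound $|F(z)|\ge x_0(1+\epsilon)$ on $\cC'\setminus\cA$ directly from the analyticity of $A(x,q)$ in the slit disk (Lemma \ref{lem:analytic_properties_A}); your proposed circle would require a separate global estimate on $|F|$ (and for small $q$ the circle of radius near $a_0$ can pass close to the zeros of $F$ at $2\pi ik/(1-q)$, so a local deformation is needed), whereas your fallback patching of singularity analysis and classical saddle point is precisely the two-regime split that the uniform saddle $z_0 = a_0 e^{-h(m/n)}$ is designed to avoid.
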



\begin{proof}
As a first step, we express the coefficient $[x^n] A(x,q)^m$ as a complex contour integral. Note first that $[x^n] A(x,q)^m = \frac{1}{n} [x^{n-1}] \frac{d}{dx} A(x,q)^m = \frac{m}{n} [x^{n-1}] A(x,q)^{m-1} \frac{d}{dx}A(x,q)$ (writing the coefficient in this way is motivated by the Lagrange inversion formula, see \cite[Theorem A.2]{FS09}). By Cauchy's integral formula, we have
$$[x^n] A(x,q)^m = \frac{m}{n} \cdot \frac{1}{2\pi i} \oint_{\cC} \frac{A(x,q)^{m-1} \frac{d}{dx}A(x,q)}{x^n} \,dx,$$
where the integral is taken along a closed contour $\cC$ surrounding $0$ (to be specified later) such that $A(x,q)$ is analytic inside. Now perform the change of variables $A(x,q) = z$ and express $x$ in terms of $z$ by means of~\eqref{eq:impl_eq_A} to obtain
\begin{equation}\label{eq:contour_integral}
[x^n] A(x,q)^m = \frac{m(1-q)^n}{n} \cdot \frac{1}{2\pi i} \oint_{\cC'} \frac{z^{m-1}}{(e^{-qz}-e^{-z})^n}\,dz.
\end{equation}
Let us now apply the saddle point method to this integral. We first identify the saddle point: set $f(z) = \log (e^{-qz} - e^{-z})$. We choose $z_0$ in such a way that $m = n z_0 f'(z_0)$. Note here that $f'(a_0) = 0$ for $a_0 = \frac{\log q}{q-1}$. Moreover, we have the Taylor expansion $z f'(z) = \frac{q \log q}{1-q} (z-a_0) + O( (z-a_0)^2 )$, so the function $z \mapsto z f'(z)$ is locally invertible around $a_0$. Thus there exists an increasing analytic function $h$ in some interval around $0$ such that $h(0) = 0$ and $w \mapsto a_0 e^{-h(w)}$ is the inverse function to $z \mapsto z f'(z)$. If $m = o(n)$, then for sufficiently large $n$ the quotient $m/n$ lies in this interval, so that $z_0 = a_0 e^{-h(m/n)}$ is a solution to the equation $m = n z_0 f'(z_0)$.

Next, we choose the integration contour in such a way that it passes through $z_0$. Specifically, we will define $\cC'$ in such a way that it contains the circular arc $\cA = \{z_0 e^{it} \,:\, |t| < \eta\}$ for a suitable constant $\eta > 0$. Note first that since $f(z)$ is increasing on the interval $[0,a_0]$, we have
$$(1-q)^{-1} e^{f(z_0)} < (1-q)^{-1} e^{f(a_0)} = q^{q/(1-q)} = x_0.$$
Thus the image of $z_0$ under the map $z \mapsto (1-q)^{-1} e^{f(z)}$ lies inside the region of analyticity of $A(x,q)$. Moreover, the function $t \mapsto (1-q)^{-1}e^{f(z_0 e^{it})}$ has the Taylor expansion (writing $h = h(m/n)$)
\begin{multline*}
\big( x_0 - O(h) \big) + O(h) it + \Big( \frac{q^{1/(1-q)} \log^2 q}{2(1-q)^2} + O(h) \Big) t^2 \\
+ \Big( \frac{q^{1/(1-q)}\log^2 q (3(1-q)+(1+q)\log q)}{6(1-q)^3} + O(h) \Big) i t^3 + O(t^4).
\end{multline*}
Let $\eta > 0$ be small, but fixed, and let us denote the image of the circular arc $\cA = \{z_0 e^{it} \,:\, |t| < \eta\}$ under the map $z \mapsto (1-q)^{-1} e^{f(z)}$ by $\tilde{\cA}$.
In view of the Taylor expansion above, if $\eta$ is chosen suitably and $h$ is small enough (recall that we are assuming $m = o(n)$, thus $h = h(m/n) = o(1)$), then $\tilde{\cA}$ lies within the domain
$$\{x \in \C \,:\, |x| < x_0 + \delta, x \notin[x_0,x_0+\delta)\},$$
where $A(x,q)$ is analytic. Moreover, the modulus is increasing in $|t|$, and the endpoints $x_+$ and $x_-$ of $\tilde{\cA}$ have modulus greater than or equal to $x_0 (1 + \epsilon)$ for some fixed $\epsilon > 0$  if $h$ is small enough. See Figure~\ref{fig:contour}.

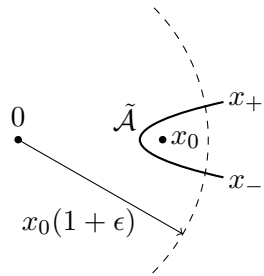
\begin{figure}[htbp]
\centering
\begin{tikzpicture}

\node[fill=black,circle,inner sep=1pt]  at (0,0) {};
\node at (0,0.3) {$0$};

\node[fill=black,circle,inner sep=1pt]  at (1.9,0) {};
\node at (2.2,0) {$x_0$};

\draw [dashed] (1.76776695297,-1.76776695297) arc (-45:45:2.5);

\draw [->] (0,0) -- (2.16506350946,-1.25);

\node at (0.8,-1.1) {$x_0(1+\epsilon)$};

\draw[rotate=-90,thick] (-0.5,2.7) parabola bend (0,1.6) (0.5,2.7);



\node at (1.4,0.3) {$\tilde{\cA}$};
\node at (3,-0.6) {$x_-$};
\node at (3,0.5) {$x_+$};

\end{tikzpicture}
\caption{Constructing the contour: the image $\tilde{\cA}$ of $\cA$.}\label{fig:contour}
\end{figure}

If we add the circular arc between $x_+$ and $x_-$ that is centred at $0$ and does not intersect the positive real axis, we obtain a closed curve $\cC$. Its image under the map $x \mapsto A(x,q)$ is a closed curve around $0$  that we call $\cC'$ (equivalently, the preimage of $\cC'$ under the map $z \mapsto (1-q)^{-1} e^{f(z)}$ is $\cC$ in view of~\eqref{eq:impl_eq_A}). This is the contour that we use to evaluate the integral~\eqref{eq:contour_integral}.

By construction, $\cC'$ contains the circular arc $\cA = \{z_0 e^{it} \,:\, |t| < \eta\}$.  For all $z$ on the rest of $\cC'$, we know that $|\frac{e^{-qz}-e^{-z}}{1-q}| = |(1-q)^{-1} e^{f(z)}| = |x_+| = |x_-| \geq x_0(1+\epsilon)$ by construction. Therefore, the contribution of $\cC' \setminus \cA$ to the integral in~\eqref{eq:contour_integral} is bounded above by $x_0^{-n} (1+\epsilon)^{-n} e^{o(n)}$ by the assumption that $m = o(n)$. This turns out to be negligible.

For the remaining integral along $\cA$, we can parametrize $z = z_0 e^{it}$ with $|t| < \eta$. This yields
\begin{equation}\label{eq:saddle_integral}
\frac{m (1-q)^n}{n} \cdot \frac{1}{2\pi i} \oint_{\cA} \frac{z^{m-1}}{(e^{-qz}-e^{-z})^n}\,dz = \frac{m(1-q)^n z_0^m}{n} \cdot \frac{1}{2\pi} \int_{-\eta}^{\eta} \exp \big( imt - n f(z_0 e^{it}) \big) \,dt.
\end{equation}
By our choice of $z_0$, the linear term in the Taylor expansion of the exponent vanishes: 
$$\frac{d}{dt} (i m t - n f(z_0e^{it})) \big|_{t=0} = i m - inz_0 f'(z_0) = 0.$$
 So we have
$$imt - n f(z_0 e^{it}) = -nf(z_0) - \Big( \frac{q \log^2 q}{2(1-q)^2} + O(h) \Big) n t^2 + O(n t^3).$$
The integral in~\eqref{eq:saddle_integral} can now be evaluated asymptotically by standard methods (see \cite[Chapter VIII]{FS09}): it is equal to
$$e^{-n f(z_0)} \Big(\frac{2\pi (1-q)^2}{q \log^2 q} + O(h) \Big)^{1/2} n^{-1/2} (1 + o(1)).$$
Thus~\eqref{eq:saddle_integral} evaluates to
$$\frac{m(1-q)^n e^{-n f(z_0)} z_0^m}{n^{3/2}} \cdot \Big( \frac{(1-q)^2}{2\pi  q \log^2 q} \Big)^{1/2} (1 + o(1)),$$
taking also into account that $h = o(1)$. Since $(1-q)^{-1}e^{f(z_0)} = (1-q)^{-1}e^{f(a_0) + o(1)} = x_0 (1+o(1))$, this also shows that the contribution from the part of $\cC'$ outside the circular arc $\cA$ is negligible as claimed. Putting everything together, the statement of the proposition follows.
\end{proof}

\begin{remark}
For $q = 1$, we have the explicit formula $[x^n] A(x,1)^m = \frac{m n^{n-m-1}}{(n-m)!}$ (which can be proven by Lagrange inversion), for which the second part of the statement of Proposition~\ref{prop:power_coefficients} is easily verified as well: one has
$$\frac{[x^{n+k}] A(x,1)^m}{[x^{n}] A(x,1)^m} \sim e^k$$
for every fixed $k$ as $n \to \infty$ and $m = o(n)$, and
$$\frac{[x^{n}] A(x,1)^{m-1}}{[x^{n}] A(x,1)^m} \to 1$$
if $n,m \to \infty$ and $m = o(n)$.
\end{remark}

\subsection{Root degree}

Let us now incorporate the root degree into the generating function $A(x,q)$: let $\rd(T)$ denote the root degree of a tree $T$, and set
$$A(x,q,u) = \sum_{n \geq 1} \sum_{T \in \kT_n} q^{d(T)} u^{\rd(T)} x^n.$$
The differential equation~\eqref{eq:A-diffeq} can be modified easily to include the new variable $u$: we have
$$
\frac{\partial}{\partial x} A(x,q,u) = e^{u A(x,q)} + qx e^{A(x,q)} \frac{\partial}{\partial x} A(x,q,u) + (u-1)q A(x,q,u) e^{A(x,q)},
$$
since the root degree is either the degree of vertex $1$ (if it is the root), or else the degree of the root in the root component after removal of vertex $1$, with $1$ added if the root is also the parent of vertex $1$. Note that this is a linear differential equation for the generating function $A(x,q,u)$ ($A(x,q) = A(x,q,1)$ can already be assumed to be known) that can be solved: set $f(x) =  A(x,q,u)$ for the time being, and rewrite the equation as
$$f'(x) +(1-u)q \cdot \frac{e^{A(x,q)}}{1-qx e^{A(x,q)}} f(x) = \frac{e^{u A(x,q)}}{1-qx e^{A(x,q)}}.$$
By virtue of~\eqref{eq:A_x-expr}, this can be simplified to (writing $A_x(x,q)$ for the partial derivative of $A(x,q)$ with respect to $x$)
$$f'(x) +(1-u)q A_x(x,q) f(x) = e^{(u-1)A(x,q)} A_x(x,q).$$
Multiply by the integrating factor $e^{(1-u)q A(x,q)}$ and integrate both sides to obtain
$$e^{(1-u)q A(x,q)} f(x) = C - \frac{1}{(u-1)(q-1)} e^{(1-q)(u-1)A(x,q)}$$
for a constant $C$. For $x = 0$, we must have $f(0) = A(0,q) = 0$, so $C$ is in fact $\frac{1}{(u-1)(q-1)}$. After some further simplification, we end up with
\begin{equation}\label{eq:Axqu-explicit}
A(x,q,u) = f(x) = \frac{1}{(u-1)(q-1)} \Big( e^{q(u-1)A(x,q)} - e^{(u-1)A(x,q)} \Big).
\end{equation}
The behaviour at the dominant singularity $x_0$ (which $A(x,q,u)$ inherits from $A(x,q)$) is particularly relevant for us: from Lemma~\ref{lem:analytic_properties_A}, we obtain
$$A(x,q,u) = \frac{1}{(u-1)(q-1)} \Big( q^{\frac{q(u-1)}{q-1}} - q^{\frac{u-1}{q-1}} \Big) - \frac{1}{q-1} \sqrt{\frac{2}{q}} \Big( q^{\frac{qu-1}{q-1}} - q^{\frac{u-1}{q-1}} \Big) \sqrt{1-x/x_0} + O \big( |x-x_0| \big)$$
for every fixed $u$ (and fixed $q$). Applying singularity analysis to this expansion in the same way as for~\eqref{eq:Axq_coeff_asymp}, we obtain
$$\frac{[x^n] A(x,q,u)}{[x^n] A(x,q)} \to \frac{q^{\frac{qu-1}{q-1}} - q^{\frac{u-1}{q-1}}}{q-1} =: g(u),$$
as $n \to \infty$. The fraction on the left is precisely the probability generating function of the root degree of $\cT_n^{(q)}$. In addition, $g(1)=1$ and the coefficients $[u^k] g(u)$ are nonnegative for all $k \geq 0$. Since the probability generating functions converge to a probability generating function, we immediately obtain convergence of the root degree distribution to a discrete limiting distribution, see \cite[Theorem IX.1]{FS09}:

\begin{proposition}
For fixed $q > 0$, the limiting distribution of the root degree in descent-biased trees $\cT_n^{(q)}$ as $n \to \infty$ is given by the probability generating function
$$p(u) = \frac{q^{\frac{qu-1}{q-1}} - q^{\frac{u-1}{q-1}}}{q-1}.$$
Thus, for every fixed $k \geq 0$, the probability that the root degree of $\cT_n^{(q)}$ is equal to $k$ tends to $\frac{q^{1/(1-q)}}{q-1} \Big(\frac{\log q}{q-1} \Big)^k \frac{q^k-1}{k!}$.
\end{proposition}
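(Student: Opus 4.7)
The plan is to combine the singular expansion of $A(x,q,u)$ already derived from \eqref{eq:Axqu-explicit} and Lemma \ref{lem:analytic_properties_A} with standard singularity analysis and the continuity theorem for probability generating functions. First I would observe that, by construction, the ratio $[x^n] A(x,q,u)/[x^n] A(x,q)$ is exactly the probability generating function of $\rd(\cT_n^{(q)})$ evaluated at $u$, so the task reduces to proving pointwise convergence of this ratio to a function $p(u)$ which is itself a probability generating function.

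For the first assertion I would apply \cite[Theorem VI.4]{FS09} separately to the numerator and denominator of that ratio, using the singular expansion of $A(x,q,u)$ displayed just before the statement. Only the coefficient of $\sqrt{1-x/x_0}$ in each expansion contributes to the leading asymptotics, and taking the quotient yields the limit
\[
g(u) = \frac{q^{(qu-1)/(q-1)} - q^{(u-1)/(q-1)}}{q-1}.
\]
That $g(1)=1$ follows by a direct limiting computation (the apparent singularity at $u=1$ is removable and can be resolved via a Taylor expansion), and non-negativity of the Taylor coefficients of $g$ is automatic since they are limits of the probabilities $\P(\rd(\cT_n^{(q)}) = k)$. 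The continuity theorem \cite[Theorem IX.1]{FS09} then upgrades pointwise convergence of the probability generating functions to convergence in distribution.

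For the explicit formula, I would rewrite $p(u)$ as
\[
p(u) = \frac{q^{1/(1-q)}}{q-1}\bigl(e^{qu\log q/(q-1)} - e^{u\log q/(q-1)}\bigr),
\]
using the identity $q^{-1/(q-1)} = q^{1/(1-q)}$. Expanding each exponential as a power series in $u$ and reading off the coefficient of $u^k$ yields
\[
[u^k] p(u) = \frac{q^{1/(1-q)}}{q-1}\cdot\frac{1}{k!}\Bigl(\frac{\log q}{q-1}\Bigr)^k \bigl(q^k - 1\bigr),
\]
which matches the stated formula (both $(q-1)^{-1}$ and $q^k-1$ are negative for $q\in(0,1)$, so their product is positive; the case $k=0$ correctly gives $0$, consistent with the fact that the root degree cannot vanish for $n \geq 2$). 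The only substantive point in the plan is the singularity analysis step, but because $A(x,q,u)$ inherits its dominant singularity and its Delta-domain of analyticity from $A(x,q)$ by virtue of the closed-form expression \eqref{eq:Axqu-explicit} (with $u$ treated as a fixed parameter), this reduces to a routine application of the transfer theorem.
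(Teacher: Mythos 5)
Your proposal is correct and mirrors the paper's argument step for step: both read off the probability generating function of the root degree as the coefficient ratio $[x^n]A(x,q,u)/[x^n]A(x,q)$, apply transfer theorems to the singular expansion inherited from Lemma~\ref{lem:analytic_properties_A} via~\eqref{eq:Axqu-explicit}, and invoke the continuity theorem for PGFs. The explicit expansion of $p(u)$ into exponentials to extract $[u^k]p(u)$ is a routine computation the paper leaves implicit, and your verification of it (including the sign check and the $k=0$ case) is accurate.
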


As always, we only need to take the limit as $q \to 1$ to obtain the familiar result for the root degree of unbiased rooted labelled trees: in this case, the limiting probability generating function is $ue^{u-1}$, and the probability that the root degree is equal to $k$ tends to $\frac{1}{e (k-1)!}$ (see \cite[Example IX.6]{FS09}).

\medskip

Let us also consider the expected value of the root degree. Taking the derivative with respect to $u$ and letting $u \to 1$ in~\eqref{eq:Axqu-explicit}, we obtain
\begin{align*}
\frac{\partial}{\partial u} A(x,q,u) \Big|_{u=1} &= \frac{q+1}{2} A(x,q)^2 \\ 
&= \frac{(1+q)(\log q)^2}{2(q-1)^2} - \frac{\sqrt{2}(1+q)\log q}{\sqrt{q}(q-1)} \cdot \sqrt{1-x/x_0} + O \big( |x-x_0| \big).
\end{align*}
Applying singularity analysis, we thus find that the expected value of the root degree of $\cT_n^{(q)}$ is
\begin{equation}\label{eq:mean_root_deg}
\frac{[x^n] \frac{\partial}{\partial u} A(x,q,u) |_{u=1}}{[x^n] A(x,q)} \underset{n \to \infty}{\to}  \frac{(1+q)\log q}{q-1}.
\end{equation}
For $q=1$, the limit is $2$. In particular, the expected root degree is bounded. This can be generalized further:

\begin{proposition}\label{prop-r-radius-bounded}
For fixed $q > 0$ and every fixed positive integer $r$, the expected number of vertices whose distance from the root of $\cT_n^{(q)}$ is at most $r$ is bounded as $n \to \infty$.
\end{proposition}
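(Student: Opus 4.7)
The plan is to set up a generating function $C_r(x,q) = \sum_{n \geq 1} \sum_{T \in \kT_n} N_r(T)\, q^{d(T)} x^n/n!$, where $N_r(T)$ is the number of vertices of $T$ at distance exactly $r$ from the root, to show that it factors as a scalar multiple of $A(x,q)^{r+1}$, and then to apply singularity analysis based on Lemma~\ref{lem:analytic_properties_A} to conclude that $[x^n] C_r(x,q)/[x^n] A(x,q)$ is bounded. Since the expected number of vertices at distance at most $R$ from the root equals $\sum_{r=0}^R [x^n] C_r(x,q)/[x^n] A(x,q)$, this will suffice.

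The key step is to establish the identity
$$C_r(x,q) = \frac{A_{r+1}(q)}{(r+1)!}\, A(x,q)^{r+1},$$
where $A_{r+1}(q)$ is the Eulerian polynomial introduced in Section~\ref{sec:proofs}. Given a pair $(T,v)$ with $v$ at distance exactly $r$ from the root, let $v_0 = \mathrm{root}, v_1, \ldots, v_r = v$ be the root-to-$v$ path, and remove the $r$ path edges to obtain an ordered $(r+1)$-tuple $(T_0, T_1, \ldots, T_r)$ of rooted labelled trees whose label sets partition $\llbracket 1, n \rrbracket$, with $T_i$ rooted at $v_i$. This map is a bijection onto the set of such ordered tuples, and the descent count splits as $d(T) = \sum_{i=0}^r d(T_i) + \#\{0 \leq i < r : \ell(v_i) > \ell(v_{i+1})\}$. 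For each unordered set of $r+1$ such trees, summing the path-descent factor $q^{\#\{\cdots\}}$ over the $(r+1)!$ orderings yields exactly $A_{r+1}(q)$, because the $r+1$ root labels are distinct and the number of path descents for a given ordering equals the number of descents of the permutation it induces on these labels. Comparing with $[x^n/n!]\, A(x,q)^{r+1} = \sum_{(T_0, \ldots, T_r)} q^{\sum_i d(T_i)}$ then yields the identity.

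Singularity analysis then finishes the argument. By Lemma~\ref{lem:analytic_properties_A}, for fixed $q \in (0,1]$, $A(x,q)$ admits the expansion $A(x,q) = \alpha - \beta \sqrt{1-x/x_0} + O(|x-x_0|)$ at its dominant singularity $x_0$, with $\alpha = \log q/(q-1)$ and $\beta = \sqrt{2/q}$ (interpreted as limits when $q=1$). Consequently, $A(x,q)^{r+1}$ admits an analogous $\Delta$-analytic expansion with singular coefficient $-(r+1)\alpha^r \beta$, so standard singularity analysis (\cite[Theorem VI.4]{FS09}) gives $[x^n] A(x,q)^{r+1}/[x^n] A(x,q) \to (r+1)\alpha^r$. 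Hence $\E[N_r(\cT_n^{(q)})] \to A_{r+1}(q)\alpha^r/r!$, a finite constant, and summing over $0 \leq r \leq R$ proves the proposition. The only step requiring genuine care is the combinatorial identity, but once the root-to-$v$ path decomposition is set up the identification of the sum over orderings with the Eulerian polynomial is essentially immediate, and no further analytic work is needed.
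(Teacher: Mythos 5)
Your proof is correct, but it takes a genuinely different route from the paper's. The paper establishes the case $r=1$ via the root-degree generating function $A(x,q,u)$ (Equation~\eqref{eq:mean_root_deg}), then argues inductively: conditioning on the size $k$ of a subtree hanging off the root, the induced distribution on $\kT_k$ (labels taken up to order isomorphism) differs from that of $\cT_k^{(q)}$ by at most a factor of $q$, so the expected degree at each level is uniformly bounded. Your approach instead computes directly, via a root-to-$v$ path decomposition, the generating function $C_r(x,q)$ for vertices at distance exactly $r$, identifying it as $\frac{A_{r+1}(q)}{(r+1)!}A(x,q)^{r+1}$ because summing the path-descent weight $q^{\#\{\cdots\}}$ over the $(r+1)!$ ways of ordering the subtrees along the path produces exactly the Eulerian polynomial $A_{r+1}(q)$; singularity analysis via Lemma~\ref{lem:analytic_properties_A} then gives $\frac{[x^n]A(x,q)^{r+1}}{[x^n]A(x,q)} \to (r+1)\alpha^r$ with $\alpha = \frac{\log q}{q-1}$. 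This buys you an \emph{explicit} asymptotic constant $\E[N_r(\cT_n^{(q)})] \to A_{r+1}(q)\alpha^r/r!$ (consistent with~\eqref{eq:mean_root_deg} at $r=1$, and reducing to $r+1$ at $q=1$), which is strictly more information than the paper's boundedness statement, and it also exhibits a pleasing structural link between descent-biased trees and Eulerian polynomials. The paper's induction is softer and avoids further singularity analysis beyond the $r=1$ base case, but yields only a bound.
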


\begin{proof}
The case $r=1$ is given by~\eqref{eq:mean_root_deg}. Consider any of the subtrees rooted at a child of the root, and condition on its size $k$. Compared to $\cT_k^{(q)}$, the probability of every given tree in $\kT_k$ (where we consider labels only up to order isomorphism) can only change by at most a factor of $q$ due to the potential descent between the root and its child. Consequently, the expected value of the root degree of the subtree can only change by at most a factor of $q$, and is in particular uniformly bounded by a constant. The statement follows immediately for $r=2$, and in a similar way for all values of $r$ by induction.
\end{proof}

\subsection{Path length}

In this subsection, we determine the asymptotic behaviour of the mean path length, i.e., the average sum of the distances from the root to all other vertices. 
Let $\PL(T)$ be the path length of a tree $T$, and let us define the generating function 
$$B(x,q) = \sum_{n \geq 1} \sum_{T \in \kT_n} q^{d(T)} \PL(T) \frac{x^n}{n!},$$
which incorporates the path length. Again, we can use the decomposition that gave us~\eqref{eq:A-diffeq}: if vertex $1$ is the root, then we notice that the path length of $T$ is the sum of the path lengths in all subtrees rooted at the root's children, plus $1$ for each non-root vertex. Otherwise, when $T$ is decomposed into a part $R$ containing the root, vertex $1$ and the branches $T_1,T_2,\ldots,T_k$ rooted at the children of vertex $1$, we obtain $\PL(T)$ by
\begin{itemize}
\item adding $\PL(R),\PL(T_1),\PL(T_2),\ldots,\PL(T_k)$,
\item adding the distance $d$ of $1$ from the root, and
\item for every vertex in $T_1,T_2,\ldots,T_k$, adding another $d+1$.
\end{itemize}
This eventually translates to the generating function identity (writing $A_x(x,q)$ for the partial derivative with respect to $x$ as before)
\begin{align*}
\frac{\partial}{\partial x} B(x,q) &= \big( x A_x(x,q) + B(x,q) \big) e^{A(x,q)} + qx e^{A(x,q)} \frac{\partial}{\partial x} B(x,q) \\
&\qquad +  qx A_x(x,q) \big( x A_x(x,q) + B(x,q) \big) e^{A(x,q)} \\
&\qquad + q \big( x A_x(x,q) + B(x,q) \big) \big(1 + x A_x(x,q)\big) e^{A(x,q)}.
\end{align*}
Again, this is a linear differential equation for $B(x,q)$ that can be solved by the same method as in our analysis of the root degree. Set $f(x) = B(x,q)$, and write the differential equation as
$$
f'(x) - \frac{(1+q+2q x A_x(x,q))e^{A(x,q)}}{1-qx e^{A(x,q)}} f(x) = \frac{x A_x(x,q) (1+q+2q x A_x(x,q))e^{A(x,q)}}{1-qx e^{A(x,q)}}.
$$
Plugging in~\eqref{eq:A-diffeq} yields
\begin{equation}\label{eq:B_diffeq}
f'(x) - (1+q+2q x A_x(x,q)) A_x(x,q) f(x) = x (1+q+2q x A_x(x,q)) A_x(x,q)^2.
\end{equation}
By implicit differentiation of~\eqref{eq:impl_eq_A}, we obtain
$$x A_x(x,q) = \frac{e^{A(x,q)} - e^{q A(x,q)}}{e^{qA(x,q)} - q e^{A(x,q)}}.$$
So the integrating factor of~\eqref{eq:B_diffeq} is
\begin{multline*}
\exp \bigg({- \int (1+q+2q x A_x(x,q)) A_x(x,q) \,dx }\bigg) \\
= \exp \bigg({- \int \Big(1+q + 2q \cdot \frac{e^{A(x,q)} - e^{q A(x,q)}}{e^{qA(x,q)} - q e^{A(x,q)}} \Big) A_x(x,q)\,dx }\bigg),
\end{multline*}
and the simple substitution $t = A(x,q)$ allows us to integrate explicitly, which reveals that $e^{-(1+q)A(x,q)} (e^{qA(x,q)} - q e^{A(x,q)} )^2$ is an integrating factor for the differential equation~\eqref{eq:B_diffeq}. In the same way, one can integrate the right side of~\eqref{eq:B_diffeq} (after multiplying by the integrating factor). Finally, we use the initial value $f(0) = A(0,q) = 0$ and solve for $f(x)$ to obtain
$$B(x,q) = f(x) = \frac{e^{2qA(x,q)} + ((1-q)^2A(x,q) - q - 1) e^{(1+q)A(x,q)} + q e^{2A(x,q)}}{(e^{qA(x,q)} - q e^{A(x,q)} )^2}.$$
Now we can use the asymptotic expansion of $A(x,q)$ in Lemma~\ref{lem:analytic_properties_A} to find that
$$B(x,q) = \frac{\log q}{2(q-1) (1-x/x_0)} + O \big( |x-x_0|^{-1/2} \big),$$
so singularity analysis yields
$$\frac{[x^n] B(x,q)}{[x^n] A(x,q)} \sim \frac{\log q}{q-1} \cdot \sqrt{\frac{\pi q}{2}} \cdot n^{3/2}$$
as $n \to \infty$. Since this quotient is precisely the mean path length of descent-biased trees $\cT_n^{(q)}$, we can summarize with the following proposition.
\begin{proposition}\label{prop:pathlength}
For fixed $q > 0$, the expected value of the path length of a descent-biased tree $\cT_n^{(q)}$ is asymptotically equal to
$$\frac{\log q}{q-1} \cdot \sqrt{\frac{\pi q}{2}} \cdot n^{3/2}$$
 as $n \to \infty$.
\end{proposition}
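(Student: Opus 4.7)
The plan is to introduce a bivariate generating function that marks path length and then proceed by the same generating function machinery already deployed for $A(x,q)$. Set
$$B(x,q) = \sum_{n \geq 1} \sum_{T \in \kT_n} q^{d(T)} \PL(T) \frac{x^n}{n!},$$
so that the expected path length of $\cT_n^{(q)}$ is the ratio $[x^n]B(x,q)/[x^n]A(x,q)$. The asymptotics of the denominator are already given by~\eqref{eq:Axq_coeff_asymp}, so the whole problem reduces to extracting $[x^n]B(x,q)$.

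To derive a functional equation for $B$, I would imitate the decomposition used for the equation~\eqref{eq:A-diffeq}: split on whether the vertex labelled $1$ is the root or not. If vertex $1$ is the root, the path length of $T$ is the sum of the path lengths of the subtrees hanging off the root, plus one for each non-root vertex (giving a contribution like $xA_x(x,q) + B(x,q)$ times $e^{A(x,q)}$). If vertex $1$ is not the root, one decomposes the rest into the root-component $R$ together with the subtrees $T_1,\dots,T_k$ rooted at the children of $1$; then $\PL(T)$ accumulates $\PL(R)$, the $\PL(T_j)$'s, the depth $d$ of vertex $1$ in $R$, and an extra $d+1$ for every descendant of vertex $1$. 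Translating this bookkeeping into generating function operations yields a linear first-order ODE in $x$ for $B(x,q)$, whose coefficients involve only $A(x,q)$ and $A_x(x,q)$; eliminating $A_x$ via~\eqref{eq:A_x-expr} leaves an equation of the form
$$B'(x) - (1+q+2qxA_x(x,q))A_x(x,q)\, B(x) = x(1+q+2qxA_x(x,q))A_x(x,q)^2.$$

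The second step is to solve this ODE. Because it is linear and first-order, an integrating factor exists; using the substitution $t=A(x,q)$ (together with the implicit relation~\eqref{eq:impl_eq_A}) turns the integrals involved into elementary ones in $t$, and an explicit rational expression for $B(x,q)$ in terms of $A(x,q)$ and $e^{A(x,q)}$ should drop out. The initial condition $B(0,q)=0$ fixes the constant of integration.

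The final step is singularity analysis. By Lemma~\ref{lem:analytic_properties_A}, $A(x,q)$ has a square-root singularity at $x_0=q^{q/(1-q)}$ with $A(x_0,q)=\frac{\log q}{q-1}$. Plugging the local expansion of $A$ into the closed form for $B$, the denominator of the rational expression will vanish to order $2$ in $\sqrt{1-x/x_0}$ at $x=x_0$, upgrading the square-root singularity of $A$ to a simple pole in $1-x/x_0$ for $B$. A routine (but careful) expansion then gives
$$B(x,q) = \frac{\log q}{2(q-1)(1-x/x_0)} + O\bigl(|x-x_0|^{-1/2}\bigr),$$
from which singularity analysis~\cite[Theorem VI.4]{FS09} yields $[x^n]B(x,q)\sim \frac{\log q}{2(q-1)}x_0^{-n}$, and dividing by the asymptotics in~\eqref{eq:Axq_coeff_asymp} produces the claimed $\frac{\log q}{q-1}\sqrt{\pi q/2}\, n^{3/2}$.

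The main obstacle I foresee is bookkeeping rather than anything conceptual: one must track depth contributions from every vertex correctly in the non-root case, and then carry out the closed-form integration and the expansion of $B$ at $x_0$ to the leading singular order without arithmetic slips, since the leading pole coefficient $\frac{\log q}{2(q-1)}$ arises from a cancellation of the constant terms in the numerator of the rational expression.
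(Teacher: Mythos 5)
Your proposal follows exactly the same route as the paper: the identical bivariate generating function $B(x,q)$, the identical decomposition according to vertex $1$, the same linear first-order ODE (you even wrote it verbatim), the same integrating-factor solution via the substitution $t=A(x,q)$, and the same singular expansion $B(x,q)=\frac{\log q}{2(q-1)(1-x/x_0)}+O(|x-x_0|^{-1/2})$ followed by singularity analysis. This is a correct and faithful reconstruction of the paper's proof.
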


Again, for $q=1$, we obtain $\sqrt{\frac{\pi}{2}} n^{-3/2}$, which is the asymptotic mean path length of a Cayley tree.

\section{Local limits of descent-biased trees}
\label{sec:trees}

We now combine our results on biased permutations as well as those from the previous section to prove a local limit for our random tree model.

\subsection{Local limits of trees}

Let us recall the notion of local limit of a sequence of (random) locally finite rooted trees $(\cT_n)_{n \geq 1}$. We say that $\cT_*$ is the local limit of $(\cT_n)_{n \geq 1}$ if, for all fixed $r \geq 0$:
\begin{align*}
B_r\left( \cT_n \right) \underset{n \to \infty}{\overset{(d)}{\to}} B_r(\cT_*),
\end{align*}
where $B_r(T)$ denotes the closed ball of radius $r$ around the root of $T$ with respect to the graph distance. 

Local limits of trees conditioned by their size or other parameters have mainly been investigated in the case of Bienaymé--Galton--Watson trees  \cite{AD14a, AD14b, ABD17} or slightly more general models of trees \cite{Pag17, Stu22}. When the offspring distribution of the Bienaymé--Galton--Watson tree $\mu$ is critical (that is, of mean $1$), the local limit $\tau^*$ is called Kesten's tree, see~\cite{Kes86}. This tree is made of an infinite spine to which i.i.d.~$\mu$-Bienaymé--Galton--Watson trees are grafted. This applies in particular to our setting when $q=1$. In this case, the size-conditioned tree $\cT_n$ is a Bienaymé--Galton--Watson tree with offspring distribution $\Poi(1)$.
On the other hand, when $q=0$ in our model, the degree of the root of $\cT_n$ goes a.s. to $\infty$ as $n \to \infty$, so that the tree does not have a local limit in the sense defined above.

The difference between our case and the Bienaymé--Galton--Watson case is that there is more dependence between the different degrees in the tree, and crucial independence arguments from \cite{AD14a, AD14b, ABD17} cannot be used. However, the structure of the local limit $\cT_*$ of $\cT^{(q)}_n$ when $q \in (0,1]$ is fixed is quite similar to that of Kesten's tree, with an infinite spine to which trees are grafted that also follow a descent-biased distribution. These grafted trees are not independent anymore, and depend on each other via the structure of the infinite spine, which is itself, as we will show, locally distributed as a descent-biased permutation.

The idea of the proof of Theorem \ref{thm:locallimittree} is the following. We decompose the ancestral line $L(n)$ of the vertex labelled $n$ in $\cT_n^{(q)}$: conditionally on its height $h_n$, we may construct $L(n)$ from a random forest of $h_n$ trees, which we graft on a spine in such a way that their roots are ordered according to a descent-biased permutation of size $h_n$. Thus, we use Theorem \ref{thm:mainconstant} which states the existence of a local limit for such a permutation, to obtain Theorem \ref{thm:locallimittree}.

\subsection{Estimates concerning the vertex labelled $n$}\label{subsec:label_n}

We start with some results on the structure of the tree $\cT_n^{(q)}$. The first lemma investigates the height of the vertex labelled $n$, as well as the height of the whole tree $\cT_n^{(q)}$ and the size of the subtree on top of the vertex labelled $n$. 

\begin{lemma} 
\label{lem:heightn}
Let $q \in (0,1]$ be fixed. The following statements hold as $n \to \infty$:
\begin{itemize}
\item[(i)] Let $h_n$ be the height of vertex $n$ in the tree $\cT_n^{(q)}$. Then, for any positive integer $r$, 
\begin{align*}
\P(h_n \leq r) = o(1).
\end{align*}
\item[(ii)]
Let $H(\cT_n^{(q)})$ be the height of the tree $\cT_n^{(q)}$. Then, for every function $h(n)$ such that $h(n) n^{-3/4} \to \infty$, 
\begin{align*}
\P\big(H(\cT_n^{(q)}) \geq h(n)\big) = o(1).
\end{align*}

\item[(iii)]
For any $f_n=o(n^{1/4})$, the probability that the subtree rooted at the vertex labelled $n$ has size at least $n/f_n$ is $o(1)$.
\end{itemize}
\end{lemma}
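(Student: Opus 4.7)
The strategy is to prove (iii) first by a generating function computation, (ii) as a direct consequence of Proposition~\ref{prop:pathlength}, and finally (i) by combining (iii) with Proposition~\ref{prop-r-radius-bounded}.

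For (iii), I would first derive an exact formula for the size of the subtree rooted at $n$. A tree $T \in \kT_n$ with $|\mathrm{subtree}_n(T)|=k<n$ decomposes into: (a) a choice of $k-1$ labels from $\{1,\ldots,n-1\}$ for the non-root vertices of the subtree at $n$, (b) a rooted labelled tree realising that subtree with $n$ at its root, and (c) a rooted labelled tree on the remaining $n-k$ labels together with a distinguished attachment vertex for $n$. Since relabellings preserve descents, the weight of (b) equals $W_k(q) := (k-1)![x^{k-1}]e^{qA(x,q)}$ (expand along its root; each child contributes a descent), and since the edge from the attachment point to $n$ is never a descent,
\begin{equation*}
\P\bigl(|\mathrm{subtree}_n(\cT_n^{(q)})|=k\bigr) = \frac{n-k}{n}\cdot\frac{[x^{k-1}]e^{qA(x,q)}\cdot[x^{n-k}]A(x,q)}{[x^n]A(x,q)}
\end{equation*}
for $1 \leq k < n$. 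Both $A(x,q)$ and $e^{qA(x,q)}$ inherit the square-root singularity at $x_0 = q^{q/(1-q)}$ from Lemma~\ref{lem:analytic_properties_A}, so $[x^n]A, [x^n]e^{qA} = \Theta(n^{-3/2}x_0^{-n})$ by singularity analysis. Splitting the range of $k$ at $n - n^{3/4}$, the bulk portion (where all three coefficient indices are $\Theta(n)$) contributes, with $\theta := k/n$, an integral $O\bigl(n^{-1/2}\int_{1/f_n}^{1}\theta^{-3/2}(1-\theta)^{-1/2}\,d\theta\bigr) = O(\sqrt{f_n/n})$, while the boundary portion ($m := n - k \ll n$) is controlled term-wise by $O(m^{-1/2}/n)$ using Proposition~\ref{prop:power_coefficients} for the asymptotics of $x_0^m[x^m]A$, summing to $O(n^{-5/8})$. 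Both contributions are $o(1)$ under $f_n = o(n^{1/4})$.

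For (ii), the ancestors of a vertex of maximum depth in a rooted tree $T$ have depths $0, 1, \ldots, H(T)$, so $\PL(T) \geq H(T)(H(T)+1)/2$. Markov's inequality together with Proposition~\ref{prop:pathlength} gives
\begin{equation*}
\P\bigl(H(\cT_n^{(q)}) \geq h(n)\bigr) \leq \frac{2\,\E[\PL(\cT_n^{(q)})]}{h(n)(h(n)+1)} = O\bigl(n^{3/2}/h(n)^2\bigr),
\end{equation*}
which is $o(1)$ whenever $h(n)/n^{3/4}\to\infty$.

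For (i), I would remove the subtree of size $s$ rooted at $n$, leaving a tree $T' \in \kT_{n-s}$ in which the former parent of $n$ sits at depth $h_n - 1$. Writing $M_{m, \leq r-1}(q) := \sum_{T' \in \kT_m}|B_{r-1}(T')|\,q^{d(T')}$, Proposition~\ref{prop-r-radius-bounded} gives $M_{m, \leq r-1}(q)/Y_{m,q} = \E[|B_{r-1}(\cT_m^{(q)})|] \leq C_{r-1}$ uniformly in $m$. Isolating $h_n = 0$ (where $\P(h_n=0) = W_n(q)/Y_{n,q} = O(1/n)$ by singularity analysis) and using the formula from (iii),
\begin{equation*}
\P(h_n \leq r) \leq \frac{W_n(q)}{Y_{n,q}} + C_{r-1}\sum_{s=1}^{n-1}\frac{\P(|\mathrm{subtree}_n|=s)}{n-s}.
\end{equation*}
Splitting the remaining sum at $s=n/2$: the part $s<n/2$ contributes at most $2C_{r-1}/n = O(1/n)$ since $1/(n-s) < 2/n$, while the part $s \geq n/2$ is bounded by $C_{r-1}\,\P(|\mathrm{subtree}_n| \geq n/2) = o(1)$ by applying (iii) with the constant sequence $f_n \equiv 2$. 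Hence $\P(h_n \leq r) = o(1)$. The main obstacle I anticipate lies in the boundary analysis of (iii), where the singularity-analysis asymptotics of the bulk must be combined with the precise behaviour of $x_0^m[x^m]A(x,q)$ for finite $m$; this is exactly what Proposition~\ref{prop:power_coefficients} is designed to provide.
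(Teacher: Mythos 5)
Your proof of part (ii) coincides with the paper's. For (iii) and (i) you take a genuinely different, and in some ways tighter, route. The paper proves (iii) by a soft averaging argument: it observes that the probability $p_k$ that vertex $k$ has a subtree of size $\geq n/f_n$ is non-increasing in $k$ (an unproved monotonicity assertion), so $p_n \leq n^{-1}\mathbb{E}[|A_{n/f_n}|]$; it then bounds the number of vertices with $\geq K$ descendants deterministically by $H(T)n/K$, splits on the event $\{H \geq n/g_n\}$ with $f_n \ll g_n \ll n^{1/4}$, and invokes (ii). This is where the threshold $n^{1/4}$ in the hypothesis comes from. You instead write the exact generating-function identity $\P(|\mathrm{subtree}_n|=k) = \frac{n-k}{n}[x^{k-1}]e^{qA}\,[x^{n-k}]A/[x^n]A$ (which is correct — the edge above $n$ is never a descent and the children's edges always are, giving the $e^{qA}$ factor) and sum directly using the square-root singularity asymptotics; this gives the quantitative bound $O(\sqrt{f_n/n}) + O(n^{-5/8})$ rather than just $o(1)$, and shows the conclusion actually holds well beyond $f_n = o(n^{1/4})$. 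For (i), the paper again invokes a monotonicity of $\P(h_i \leq r)$ in $i$ together with Proposition~\ref{prop-r-radius-bounded}; you instead decompose at vertex $n$, observe that given the subtree size $s$ the attachment vertex is uniform in a $\cT_{n-s}^{(q)}$, and combine Proposition~\ref{prop-r-radius-bounded} with your (iii). Both your routes are correct; they avoid relying on the paper's two "it is clear" monotonicity claims (which do hold, by a coupling that swaps consecutive labels, but are not proved in the paper) at the cost of the more involved saddle-point/singularity bookkeeping. One small attribution slip: the asymptotics of $x_0^m[x^m]A(x,q)$ that you use in the boundary range come from Lemma~\ref{lem:analytic_properties_A} and the transfer theorem~\eqref{eq:Axq_coeff_asymp}, not really from Proposition~\ref{prop:power_coefficients}, which concerns $[x^n]A^m$ for growing $m$ rather than $[x^m]A$ for $m$ small-to-moderate.
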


\begin{proof}
For part (i), note that the probability for the height of vertex $i$ to be less than or equal to $r$ is non-increasing in $i$. So if the probability did not tend to $0$ (i.e., if its liminf was some $\epsilon > 0$), the expected number of vertices whose distance from the root is at most $r$ would have to be at least $\epsilon n$ for arbitrarily large values of $n$, contradicting Proposition~\ref{prop-r-radius-bounded}.

For part (ii) observe that if the height of a tree is $h$, then the path length must clearly be at least $1 + 2 + \cdots + h = \frac{h(h+1)}{2}$, since there is at least one vertex at distance $j$ from the root for every $j \leq h$. Thus the probability in question is bounded above by the probability that $\PL(\cT_n^{(q)}) \geq \frac{ h(n) (h(n)+1)}{2}$. The statement now follows immediately from Proposition~\ref{prop:pathlength} by means of the Markov inequality.

Let us finally prove (iii). Observe that deterministically, for any $K>0$, the number of vertices in an $n$-vertex tree $T$ that have at least $K$ descendants is at most $H(T) n/K$, where $H(T)$ is the height of the tree. Indeed, let $A_K(T)$ be the set of such vertices, and $M_K(T)$ the subset of $A_K(T)$ consisting of vertices such that none of their children is in $A_K(T)$. Then, the subtrees rooted at the elements of $M_K(T)$ are disjoint, and thus $|M_K(T)| \leq n/K$. In addition, any element of $A_K(T)$ is necessarily an ancestor of an element of $M_K(T)$, so that $|A_K(T)| \leq H(T) \, |M_K(T)| \leq H(T) n/K$.

Let $g_n$ be a function such that $f_n \ll g_n \ll n^{1/4}$. Letting $p_k$ be the probability that the vertex labelled $k$ in $\cT_n^{(q)}$ has a subtree of size $\geq n/f_n$ rooted at it, it is clear that $p_k$ is non-increasing in $k$. Hence, $$p_n \leq \P\left( H\left(\cT_n^{(q)}\right) \geq n/g_n \right) + \frac{1}{n} \E\left[ \big|A_{n/f_n} \big(\cT_n^{(q)}\big)\big| \,\Big|\, H\left(\cT_n^{(q)}\right) \leq n/g_n \right].$$

By part (ii), we have $\P\left( H(\cT_n^{(q)}) \geq n/g_n \right) = o(1)$. We also get from the previous deterministic argument that
\begin{align*}
\frac{1}{n} \E\left[ \big|A_{n/f_n} \big(\cT_n^{(q)}\big)\big| \,\Big|\, H\left(\cT_n^{(q)}\right) \leq n/g_n \right] \leq \frac{1}{n} \frac{n}{g_n} \frac{n}{(n/f_n)} = \frac{f_n}{g_n} = o(1). 
\end{align*}

This proves (iii).

\end{proof}

\subsection{Ancestral line of $n$}

Let us now consider the distribution of the ancestral line of vertex $n$. The proof of the following proposition, which provides an algorithm to construct a descent-biased tree of size $n$ with given height $h_n$ of the vertex labelled $n$, is quite straightforward.

\begin{proposition}
\label{prop:treeisforest}
Let $n \geq 1$ and $h \geq 0$. Denote by $F_{n,h} := (T_1, \ldots, T_{h+1})$ the random forest of $(h+1)$ trees of total size $n$, vertices labelled from $1$ to $n$, such that the root $\rho(T_{h+1})$ of $T_{h+1}$ has label $n$ and 
\begin{align*}
\P \left( F_{n,h}=(t_1, \ldots, t_{h+1}) \right) \propto \mathds{1}_{\sum_{i=1}^{h+1} |t_i|=n} \mathds{1}_{\ell(\rho(T_{h+1}))=n} q^{\sum_{i=1}^{h+1} d(t_i)},
\end{align*}
for any forest $(t_1, \ldots, t_{h+1})$ of $h+1$ trees of size at least $1$, with $n$ vertices labelled from $1$ to $n$.

Furthermore, let $\sigma$ be a $q$-descent-biased permutation of $\kS_{h}$ independent of $F_{n,h}$ and let $\rho_1, \ldots, \rho_{h}$ be the roots of $T_1, \ldots, T_h$ sorted in increasing order of labels. Construct now a tree $\tau_n$ from $F_{n,h}$ by adding all edges of the form $\{ \rho_{\sigma(i)}, \rho_{\sigma(i+1)}\}$, $1 \leq i \leq h$, and the edge $\{ \rho_{\sigma(h)}, n \}$, and root it at $\rho_{\sigma(1)}$. Then, $\tau_n$ has the same distribution as $\cT_{n}^{(q)}$, conditioned on the height of vertex $n$ being $h$:
\begin{align*}
\left(\cT_{n}^{(q)} \, | \, h_n=h \right) \overset{(d)}{=} \tau_n.
\end{align*}
\end{proposition}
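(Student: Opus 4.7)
The plan is to exhibit a weight-preserving bijection between the set $\{\tau \in \kT_n : h_n(\tau) = h\}$ of trees in which the vertex labelled $n$ sits at height $h$, and the set of pairs $\big((t_1,\ldots,t_{h+1}), s\big)$ with $(t_1,\ldots,t_{h+1})$ an admissible forest (in the sense of the support of $F_{n,h}$) and $s \in \kS_h$. The construction in the statement will turn out to be the forward direction of this bijection, and the proposition will then follow from a simple descent-count identity: both resulting laws are proportional to $q^{d(\tau)}$ on the same finite set.

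First, I would describe the inverse map, which decomposes a tree along the ancestral line of $n$. Given $\tau$ with $h_n(\tau) = h$, let $v_0, v_1, \ldots, v_h$ be the ancestral line of the vertex labelled $n$ (so $v_0 = \rho(\tau)$ and $v_h = n$); deleting the $h$ spine edges produces $h+1$ rooted subtrees. Let $T_{h+1}$ be the component containing (and hence rooted at) $n$, and, for $1 \le j \le h$, let $T_j$ be the component rooted at $\rho_j$, where $\rho_1 < \cdots < \rho_h$ is the rearrangement of $\{v_0,\ldots,v_{h-1}\}$ in increasing order of label. Finally define $s \in \kS_h$ by $v_{i-1} = \rho_{s(i)}$, so that $s(i)$ records the label-rank of the $i$-th spine vertex counted from the root. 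A direct check confirms this inverts the construction of $\tau_n$.

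Next, I would verify the descent-count factorization
\begin{align*}
d(\tau) = \sum_{i=1}^{h+1} d(T_i) + d(s).
\end{align*}
Every edge of $\tau$ is either internal to some $T_i$ (contributing $\sum_i d(T_i)$ in total) or one of the $h$ spine edges. The final spine edge, from $\rho_{s(h)}$ to $n$, is never a descent because $n$ carries the largest label; for $1 \le i \le h-1$, the edge from parent $\rho_{s(i)}$ to child $\rho_{s(i+1)}$ is a descent iff $\ell(\rho_{s(i)}) > \ell(\rho_{s(i+1)})$, which, since the $\rho_j$'s are indexed in increasing order of label, is equivalent to $s(i) > s(i+1)$. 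Summing gives exactly $d(s)$.

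Combining these two ingredients, the construction outputs a given $\tau$ -- corresponding via the bijection to a unique pair $((t_1,\ldots,t_{h+1}), s)$ -- with probability proportional to $q^{\sum d(t_i)} \cdot q^{d(s)} = q^{d(\tau)}$, which is proportional (on the same finite set of trees with $h_n = h$) to the conditional law of $\cT_n^{(q)}$; equality then follows from normalization. No step is substantively difficult; the argument is essentially bookkeeping along the ancestral line of $n$, and the only point that deserves a careful look is the descent count along the spine, specifically that reindexing the non-$n$ spine vertices by label rank converts their descents into the descents of $s$, and that the terminal edge into $n$ never contributes.
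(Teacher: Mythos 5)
Your proposal is correct, and since the paper explicitly omits the proof (calling it ``quite straightforward''), your bijective argument is exactly the kind of filling-in one would expect. The decomposition along the ancestral line of $n$, the identification of the spine permutation $s$ with the label-ranks of $v_0,\ldots,v_{h-1}$, and the factorization $d(\tau)=\sum_i d(T_i)+d(s)$ (with the terminal edge into $n$ never a descent because $n$ has the maximal label) are all verified cleanly. Two minor remarks: the statement's range ``$1\le i\le h$'' for the spine edges $\{\rho_{\sigma(i)},\rho_{\sigma(i+1)}\}$ must be read as $1\le i\le h-1$ (since $\sigma\in\kS_h$), which you implicitly and correctly did; and since $F_{n,h}$ is formally an ordered tuple of trees, the forward map is $h!$-to-one onto trees with $h_n=h$ rather than a strict bijection, but the fibers have constant size and identical weight, so the weight-proportionality argument goes through unchanged. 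It would be worth making that overcounting explicit, but it does not affect the conclusion.
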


\subsection{Forest of descent-biased trees}

By Proposition \ref{prop:treeisforest}, studying the ancestral line of $n$ boils down to studying a random descent-biased forest. In what follows, let $m$ be a function from the set of positive integers to itself such that $m(n) \to +\infty$ as $n \to +\infty$ and $m(n)=o(n)$. Let $F_n$ be a forest of $m(n)$ trees with vertices labelled by $\{1, \ldots, n \}$, such that
\begin{align*}
\P\left(F_n = (t_1, \ldots, t_{m(n)})\right) \propto \mathds{1}_{\sum |t_j|=n} q^{\sum d(t_j)}.
\end{align*}

For any $k \geq 1$ and any tree $T \in \kT_k$, denote by $s(T)$ the shape of $T$, that is, the rooted unlabelled tree obtained by removing all labels of the vertices of $T$. The next proposition is the main tool in the proof of Theorem \ref{thm:locallimittree}. 

\begin{proposition}
\label{prop:shapesoftrees}
\begin{itemize}
\item[(i)]
Fix a rooted unlabelled tree $\tau$. For any $0 \leq a < b \leq 1$, let $N_{\tau}(a,b)$ be the number of trees in $F_n$ of shape $\tau$ whose root label belongs to $( na,nb] \cap \Z$. Then, there exists a continuous function $G_\tau: [0,1] \to \R_+$ such that, for all $0 \leq a < b \leq 1$:

\begin{align*}
\frac{N_{\tau}(a,b)}{m(n)} \underset{n \to \infty}{\overset{\P}{\to}} \int_a^b G_\tau(s) ds.
\end{align*}

In what follows, set $G(s) := \sum_{\tau} G_\tau(s)$, the sum being taken over all possible finite shapes $\tau$, and, for all $x \in [0,1]$, define
\begin{align*}
L(x) := \sup \left\{ a \in [0,1], \int_0^a G(s) ds \leq x \right\}. 
\end{align*}

\item[(ii)] 
We have $\int_0^{L(x)} G(s) ds = x$.

\item[(iii)]
With the same notation, let $\ell(\rho_1), \ldots, \ell(\rho_{m(n)})$ be the labels of the roots of the $m(n)$ trees ordered increasingly. Then, the following convergence holds in probability, with respect to $|| \cdot ||_\infty$ on $[0,1]$:
\begin{align*}
\frac{1}{n} \left(\ell(\rho_{\lfloor x \, m(n) \rfloor})\right)_{0 \leq x \leq 1} \underset{n \to \infty}{\overset{\P}{\to}} \left(L(x)\right)_{0 \leq x \leq 1}.
\end{align*}

\item[(iv)]
Fix a rooted unlabelled tree $\tau$. With the same notation as in (iii), for any $0 \leq u < v \leq 1$, let $R_{\tau}(u,v)$ be the number of trees in $F_n$ of shape $\tau$ with root label in the set $\{ \ell(\rho_{\lfloor u \, m(n)\rfloor +1}), \ldots, \ell(\rho_{\lfloor v \, m(n) \rfloor}) \}$ (that is, whose root label is between the $(u m(n))$-th smallest and the $(vm(n))$-th smallest among the root labels). Then
\begin{align*}
\frac{R_{\tau}(u,v)}{m(n)} \underset{n \to \infty}{\overset{\P}{\to}} \int_{L(u)}^{L(v)} G_\tau(s) ds.
\end{align*}
\end{itemize}

In addition, for any $M_1(n), M_2(n)$ such that $1 \ll M_1(n) \ll M_2(n) \ll n$, these estimates hold uniformly in $m(n)$ as $M_1(n) \leq m(n) \leq M_2(n)$.
\end{proposition}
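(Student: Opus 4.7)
The plan is to prove (i) by a first-and-second moment calculation, then deduce (ii), (iii), and (iv) from (i) by analytic arguments.

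\emph{Part (i).} Fix a finite rooted shape $\tau$ with $|\tau| = k$, and for $r \in \{1, \ldots, k\}$ let $W_\tau(q; r) = \sum_T q^{d(T)}$ where the sum runs over labellings of $\tau$ by $\{1, \ldots, k\}$ whose root has label $r$. Marking one tree of shape $\tau$ with root label $\ell$, choosing which $r-1$ of its other labels lie below $\ell$ and which $k-r$ lie above (giving $\binom{\ell-1}{r-1}\binom{n-\ell}{k-r}$ choices), and using $A(x,q)^{m(n)-1}$ to handle the remaining $m(n)-1$ trees, one obtains
\begin{align*}
\E\bigl[\#\{\text{trees of shape } \tau, \text{root label } \ell\}\bigr] = m(n) \cdot \frac{(n-k)!}{n!} \cdot \frac{[x^{n-k}]A(x,q)^{m(n)-1}}{[x^n]A(x,q)^{m(n)}} \cdot \sum_{r=1}^k \binom{\ell-1}{r-1}\binom{n-\ell}{k-r} W_\tau(q;r).
\end{align*}
For $\ell = \lfloor sn \rfloor$, standard binomial approximations give $\binom{\ell-1}{r-1}\binom{n-\ell}{k-r} \sim s^{r-1}(1-s)^{k-r} n^{k-1}/((r-1)!(k-r)!)$, and Proposition~\ref{prop:power_coefficients} yields $[x^{n-k}]A^{m-1}/[x^n]A^m \sim q^{qk/(1-q)}(1-q)/\log(1/q)$, uniformly in $m(n)$ in the prescribed range. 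The expectation above is thus asymptotically $G_\tau(s) \cdot m(n)/n$ with
\begin{align*}
G_\tau(s) = \frac{1-q}{\log(1/q)} \cdot q^{qk/(1-q)} \cdot \frac{1}{(k-1)!} \sum_{r=1}^k \binom{k-1}{r-1} s^{r-1}(1-s)^{k-r} W_\tau(q;r),
\end{align*}
which is a polynomial (hence continuous) in $s$, and summing over $\ell$ gives $\E[N_\tau(a,b)]/m(n) \to \int_a^b G_\tau(s)\,ds$. Convergence in probability comes from a corresponding second-moment bound: the computation of $\E[N_\tau(a,b)^2]$ produces the ratio $[x^{n-2k}]A^{m-2}/[x^n]A^m$, which by Proposition~\ref{prop:power_coefficients} factors asymptotically as the square of the one-marked ratio, so that $\mathrm{Var}(N_\tau(a,b)/m(n)) \to 0$ and Chebyshev's inequality gives (i).

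\emph{Parts (ii)--(iv).} The key identity for the remaining parts is $\int_0^1 G(s)\,ds = 1$ where $G = \sum_\tau G_\tau$. Using $\int_0^1 s^{r-1}(1-s)^{k-r}\,ds = (r-1)!(k-r)!/k!$ and $\sum_{|\tau|=k}\sum_r W_\tau(q;r) = k!\,[x^k]A(x,q)$, an explicit calculation gives $\int_0^1 G(s)\,ds = \frac{1-q}{\log(1/q)} \sum_k [x^k]A(x,q) \cdot x_0^k = \frac{1-q}{\log(1/q)} A(x_0,q) = 1$, using $A(x_0, q) = \log(1/q)/(1-q)$ from Lemma~\ref{lem:analytic_properties_A}. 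Since $G_\tau > 0$ on $(0,1)$ for any $\tau$, $F(x) := \int_0^x G(s)\,ds$ is a continuous strictly increasing bijection $[0,1] \to [0,1]$ whose inverse is precisely $L$, giving (ii). For (iii), part (i) together with the tail bound $\sum_{|\tau|>K} \E[N_\tau(0,1)]/m(n) = 1 - \sum_{|\tau|\leq K}\E[N_\tau(0,1)]/m(n) \to 1 - \sum_{|\tau|\leq K} \int G_\tau \to 0$ as $K \to \infty$ gives pointwise convergence in probability of the empirical CDF $F_n(t) = \sum_\tau N_\tau(0,t)/m(n)$ to $F(t)$; monotonicity and continuity allow a Glivenko--Cantelli-type upgrade to uniform convergence on $[0,1]$, and inversion yields the uniform convergence of $\ell(\rho_{\lfloor xm(n)\rfloor})/n = F_n^{-1}(x)$ to $L(x)$. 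Finally, (iv) is a direct sandwich argument: by (iii), the labels $\ell(\rho_{\lfloor um(n)\rfloor+1})$ and $\ell(\rho_{\lfloor vm(n)\rfloor})$ lie within $\varepsilon n$ of $L(u) n$ and $L(v) n$ respectively with high probability, so monotonicity of $N_\tau$ in its interval combined with (i) yields the claim upon letting $\varepsilon \to 0$.

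The main technical obstacle is the second-moment computation needed for concentration, which must use the uniform asymptotics of Proposition~\ref{prop:power_coefficients} in order to handle all $m(n) \in \llbracket M_1(n), M_2(n)\rrbracket$ simultaneously; uniformity at the remaining steps is then tracked through without difficulty. The key conceptual subtlety is the identity $\int_0^1 G(s)\,ds = 1$: since trees in $F_n$ have average size $n/m(n) \to \infty$, it is not obvious a priori that the root-label mass is not carried away by shapes of size tending to infinity, and the generating-function identity $A(x_0,q) = \log(1/q)/(1-q)$ is precisely what rules this leakage out.
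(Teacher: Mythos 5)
Your proof is correct and follows essentially the same approach as the paper: moments of $N_\tau(a,b)$ via coefficient ratios of $A(x,q)^m$ and Proposition~\ref{prop:power_coefficients}, Chebyshev for concentration, then monotonicity/continuity (Dini/Glivenko--Cantelli) and a sandwich argument for (ii)--(iv), with your explicit binomial-coefficient computation of $G_\tau$ being equivalent to the paper's order-statistics formulation in Lemma~\ref{lem:treesofshapetau}. Your explicit verification that $\int_0^1 G(s)\,ds = 1$ via the identity $A(x_0,q) = C_q$ is a worthwhile addition: the paper declares (ii) ``immediate'' from strict positivity of $G$, but that inference silently uses this normalization (otherwise $L(x)$ could equal $1$ while $\int_0^{L(x)} G < x$), so making the mass identity explicit genuinely tightens the argument.
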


In other words, for all $a \in [0,1]$, the proportion of trees with shape $\tau$ whose roots have labels among the smallest $a m(n)$ (among the root labels in $F_n$) is asymptotically deterministic.
The proof of this proposition is postponed to the end of the section. Let us first prove that it implies Theorem \ref{thm:locallimittree}.

To this end, we actually show something stronger. Define the $r$-hull of a rooted tree $T$ with $n$ labelled vertices as follows: let $a^n_r$ be the $r$-th lowest ancestor of the vertex labelled $n$ (or $n$ itself if the height $h_n$ of $n$ is less than $r$). In particular, $a^n_r$ has height $r-1$ for all $r \leq h_n+1$. Then the $r$-hull $T_{/r}$ is obtained from $T$ by removing all descendants of $a^n_r$.
We also let $F_{/r}$ denote the ordered forest of $(r+1)$ trees obtained by removing the edges $\{ \{ a^n_{i}, a^n_{i+1} \}, 1 \leq i \leq r \}$. We prove that, for any fixed $r$, the (unlabelled) $r$-hull of $\cT_n^{(q)}$ converges in distribution. This in particular implies directly the one-endedness of the local limit.

\begin{proof}[Proof of Theorem \ref{thm:locallimittree}]
First, observe that, by Lemma \ref{lem:heightn} (iii), with high probability the size $X_n$ of the subtree rooted at $n$ is $\leq n^{4/5}$. Furthermore, conditionally on this size, this subtree is independent of the rest of the tree. Hence, up to relabelling the vertices in increasing order, by Proposition \ref{prop:treeisforest}, the connected components of $\cT_n^{(q)} \setminus \{\{a^n_i, a^n_{i+1} \}, 1 \leq i \leq h_n \}$, without the component containing vertex $n$, behave like a descent-biased forest on $n-X_n$ vertices. Hence, we can apply Proposition \ref{prop:shapesoftrees} to the forest $F_{/r}$.

Fix $r \geq 1$, and consider an $r$-tuple 
$$A_r := (\tau_1, \ldots, \tau_r)$$ 
such that, for all $1 \leq i \leq r$, $\tau_i$ is a finite rooted unlabelled tree. Let again $a^n_1, \ldots, a^n_{r+1}$ be the first $r+1$ ancestors of the vertex labelled $n$ in $\cT_n^{(q)}$ (with high probability, by Lemma \ref{lem:heightn} (i), $r+1 < h_n$). In particular, $a^n_1$ is the root of $\cT_n^{(q)}$. For all $1 \leq i \leq r$, let $T_i$ be the connected component of $\cT_n^{(q)} \setminus \{ \{a^n_{i}, a^n_{i+1} \}, 1 \leq i \leq r\}$ containing $a^n_i$. Finally, let $p_n(A_r)$ be the probability that $(s(T_1), s(T_2), \ldots, s(T_r)) = A_r$. We want to prove that $p_n(A_r)$ converges as $n \to \infty$. More precisely, let $D^{(r)}: [0,1]^{r} \to \R_+$ be the asymptotic density of $\frac{1}{m} \left( S_{m}^{(q)}(1), \ldots, S_m^{(q)}(r)\right)$ as $m \to \infty$ (which exists by Theorem \ref{thm:mainconstant}). Then, we claim that, with $L$ and $G$ as in Proposition~\ref{prop:shapesoftrees},
\begin{equation}
\label{eq:convergencepnAr}
p_n(A_r) \underset{n \to \infty}{\to} p_\infty(A_r) := \int_{[0,1]^{r}} D^{(r)}(s_1, \ldots, s_r) \prod_{i=1}^r G_{\tau_i}(L(s_i)) L'(s_i) ds_i.
\end{equation}

On the other hand, observe that we have
\begin{align*}
\sum_{\tau_{r+1}} p_\infty(\tau_1, \ldots, \tau_{r+1}) &= \int_{[0,1]^{r+1}} D^{(r+1)}(s_1, \ldots, s_r, s) \prod_{i=1}^r G_{\tau_i}(L(s_i)) L'(s_i) ds_i \\
&\qquad \sum_{\tau_{r+1}} G_{\tau_{r+1}}(L(s)) L'(s) ds\\
&= \int_{[0,1]^{r}} \prod_{i=1}^r G_{\tau_i}(L(s_i)) L'(s_i) ds_i \int_{s=0}^1 D^{(r+1)}(s_1, \ldots, s_r, s) G(L(s)) L'(s) ds.
\end{align*}
Since, by Proposition \ref{prop:shapesoftrees} (ii), $\int_0^{L(s)} G(u) du = s$ for all $s \in [0,1]$, we have by differentiation $G(L(s)) L'(s) = 1$. Hence, 
\begin{align*}
\sum_{\tau_{r+1}} p_\infty(\tau_1, \ldots, \tau_{r+1}) = \int_{[0,1]^{r+1}} \prod_{i=1}^r G_{\tau_i}(L(s_i)) L'(s_i) ds_i D^{(r)}(s_1, \ldots, s_r) ds = p_\infty(\tau_1, \ldots, \tau_r).
\end{align*}

In other words, with high probability the size of the hull of radius $r$ is stochastically bounded. This along with \eqref{eq:convergencepnAr} proves Theorem \ref{thm:locallimittree}.

We now only need to prove \eqref{eq:convergencepnAr}.
To this end, let $\rho_1, \ldots, \rho_{h_n}$ be the roots of $\{T_1, \ldots, T_{h_n}\}$ sorted in increasing label order, so that the trees are sorted in increasing order of their root labels. Then we have, by Proposition~\ref{prop:treeisforest}:
\begin{equation}
\label{eq:pnarsum}
p_n(A_r) = \sum_{1 \leq \ell_1, \ldots, \ell_r \leq h_n} \P \left(S_{h_n}^{(q)}(1)=\ell_1, \ldots, S_{h_n}^{(q)}(r)=\ell_r \right) \P \left( s\left(T_{\ell_1}\right) = \tau_1, \ldots, s\left(T_{\ell_r}\right) = \tau_r \right).
\end{equation}

We now define a model of balls-in-boxes, which turns out to be useful in the proof of \eqref{eq:convergencepnAr}. We fix an integer $K \geq 1$, and set, for all $0 \leq i \leq K$, $a_i = \lfloor i h_n/K \rfloor$. In particular $a_0=0$ and $a_K=h_n$. Set also $E_i=( a_{i-1}, a_i ] \cap \Z$ for all $1 \leq i \leq K$ (one should think of it as $K$ boxes, where $K$ will be taken large but fixed as $n \to \infty$). We denote by $Z_{r}$ the set of $r$-tuples $(i_1, \ldots, i_{r})$ such that $i_j \neq i_k$ for any $j \neq k$. This corresponds to choosing $r$ ordered boxes among the $K$.

Fix now an ordered element $E := (i_1, \ldots, i_{r}) \in Z_{r}$, and denote by $Y_E$ the subset of $\llbracket 1, h_n \rrbracket^{r}$ made of the $r$-tuples $(\ell_1, \ldots, \ell_{r})$ such that $\ell_j \in E_{i_j}$ for all $1 \leq j \leq r$. In words, considering $h_n$ balls, this corresponds to allocations of $r$ of these balls to the boxes of $E$. Denote also by $Y_{r}$ the subset of $\llbracket 1, h_n \rrbracket^{r}$ made of $r$-tuples $(\ell_1, \ldots, \ell_{r})$ such that no two of these elements belong to the same $E_i$ (that is, allocations of $r$ balls to $r$ different boxes), and denote by $\overline{Y}_{r}$ its complement in $\llbracket 1, h_n \rrbracket^{r}$. Finally, for $E \in Y_{r}$, let also $c_E, C_E$ be the minimum (resp.~maximum) of $\P \left(S_{h_n}^{(q)}(1)=\ell_1, \ldots, S_{h_n}^{(q)}(r)=\ell_{r} \right)$ for $(\ell_1, \ldots, \ell_{r}) \in E$.

Then we claim the following:
\begin{itemize}
\item
\begin{equation}
\label{eq:eq1}
h_n^{-r} \sum_{(\ell_1, \ldots, \ell_{r}) \in Y_E} \P \left( s\left(T_{\ell_1}\right) = \tau_1, \ldots, s\left(T_{\ell_r}\right) = \tau_r \right) \underset{n \to \infty}{\to} \prod_{j=1}^{r} \int_{L(a_{i_j-1})}^{L(a_{i_j})} G_{\tau_{j}}(s) ds;
\end{equation}
\item
\begin{equation}
\label{eq:eq2}
\lim_{K \to \infty} \limsup_{n \to \infty} \sum_{(\ell_1, \ldots, \ell_r) \in \overline{Y}_{r}} \P \left(S_{h_n}^{(q)}(1)=\ell_1, \ldots, S_{h_n}^{(q)}(r)=\ell_r \right) = 0.
\end{equation}
\item We have
\begin{equation}
\label{eq:eq3}
\lim_{K \to \infty} \limsup_{n \to \infty}\sup_{E \in Y_{r}} h_n^{r} (C_E-c_E) = 0,
\end{equation}
and 
\begin{equation}
\label{eq:eq4}
\lim_{K \to \infty} \limsup_{n \to \infty}\sup_{E \in Y_{r}} h_n^{r} C_E \in (0,\infty).
\end{equation}
\end{itemize}

In words, \eqref{eq:eq2} states that, in the computation of $p_n(A_r)$, we can restrict ourselves to the $r$-tuples such that no two $\ell_i$'s are in the same box. In addition, \eqref{eq:eq3} and \eqref{eq:eq4} state that $\P \left(S_{h_n}^{(q)}(1)=\ell_1, \ldots, S_{h_n}^{(q)}(r)=\ell_{r} \right)$ asymptotically only depends on the boxes in which $\ell_1, \ldots, \ell_{r}$ are, and that the second term in \eqref{eq:pnarsum} is asymptotically a constant times $h_n^{-r}$. Finally, \eqref{eq:eq1} shows the convergence of our quantity of interest.

It is clear that \eqref{eq:eq1}, \eqref{eq:eq2}, \eqref{eq:eq3} and \eqref{eq:eq4} imply \eqref{eq:convergencepnAr} and therefore Theorem \ref{thm:locallimittree}.

To prove \eqref{eq:eq1}, observe that

\begin{align*}
h_n^{-r} \sum_{(\ell_1, \ldots, \ell_{r}) \in Y_E} &\P \left( s\left(T_{\ell_1}\right) = \tau_1, \ldots, s\left(T_{\ell_r}\right) = \tau_r \right) \\
&= h_n^{-r} \sum_{(\ell_1, \ldots, \ell_{r}) \in Y_E} \E\left[\mathds{1}_{s\left(T_{\ell_1}\right) = \tau_1} \cdots \mathds{1}_{s\left(T_{\ell_r}\right) = \tau_r} \right]\\
&= h_n^{-r} \E\left[ R_{\tau_1}\left( a_{i_1-1}, a_{i_1} \right) \cdots R_{\tau_r}\left( a_{i_{r}-1}, a_{i_{r}} \right) \right].
\end{align*}

Since, by Proposition \ref{prop:shapesoftrees} (iv), all variables $h_n^{-1} R_{\tau_{j}}(a_{i_j-1}, a_{i_j})$ are a.s.~bounded and converge in probability respectively to $\int_{L(a_{i_j-1})}^{L(a_{i_j})} G_{\tau_{j}}(s) ds$, we get \eqref{eq:eq1}.

In order to prove \eqref{eq:eq2}, recall that the variable $(X_1, \ldots, X_{r})$ of Theorem \ref{thm:mainconstant} is absolutely continuous with respect to the Lebesgue measure on $[0,1]^{r}$. Therefore, there exists a constant $C>0$ such that, for all $n, K$:
\begin{align*}
\sum_{(\ell_1, \ldots, \ell_r) \in \overline{Y}_{r}} \P \left(S_{h_n}^{(q)}(1)=\ell_1, \ldots, S_{h_n}^{(q)}(r)=\ell_r \right) \leq C \P \left( \exists j\neq k, d(U_j, U_k) \leq 1/K \right),
\end{align*}
where $(U_j, 1 \leq j \leq r)$ are i.i.d.~uniform on $[0,1]$. In particular, this goes to $0$ as $K \to \infty$.

Finally, \eqref{eq:eq3} and \eqref{eq:eq4} are consequences of Proposition \ref{prop:loclim1} (ii).
\end{proof}

\subsection{Proof of Proposition \ref{prop:shapesoftrees}}

It turns out that Proposition \ref{prop:shapesoftrees} (iii) and (iv) are consequences of Proposition \ref{prop:shapesoftrees} (i). Hence, we first prove (i) and (ii), and then explain how to obtain (iii) and (iv) from (i). We emphasize the fact that all estimates that we prove are uniform for any $m(n) \in [M_1(n), M_2(n)] \cap \Z$, where $M_1, M_2$ satisfy $1 \ll M_1(n) \ll M_2(n) \ll n$. This is a consequence of the uniformity in Proposition \ref{prop:power_coefficients}.

We need the following lemma on the number of trees of given shape in $F_n$. For any unlabelled rooted tree $\tau$, let $N_\tau := N_\tau(0,1)$ denote the number of trees in $F_n$ that have shape $\tau$. We also write
\begin{align*}
P_\tau := \P \left( s\left(\cT_{|\tau|}^{(q)}\right) = \tau \right)
\end{align*}
for the probability that a $q$-descent-biased tree of size $|\tau|$ has shape $\tau$. Recall that $x_0 =q^{\frac{q}{1-q}}$ is the radius of convergence of the generating function $A(x,q)$, and recall also the notation $C_q = \frac{\log(1/q)}{1-q} = A(x_0,q)$.

\begin{lemma}
\label{lem:treesofshapetau}
For any fixed unlabelled rooted tree $\tau$, we have
\begin{align*}
\frac{N_\tau}{m(n)} \overset{\P}{\underset{n \to \infty}{\to}} [x^{|\tau|}] A(x,q) \frac{x_0^{|\tau|}}{C_q} P_\tau.
\end{align*}
\end{lemma}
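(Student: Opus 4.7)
The plan is to compute the first and second factorial moments of $N_\tau$ using the generating functions developed in Section~\ref{sec:genfun_trees} and Proposition~\ref{prop:power_coefficients}, then conclude convergence in probability via Chebyshev's inequality.

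First, I would express $\E[N_\tau]$ as a ratio of coefficients of powers of $A(x,q)$. Since the law of $F_n$ is invariant under permuting the trees, I can treat the $m=m(n)$ trees as ordered. The EGF of ordered forests of $m$ weighted labelled trees is $A(x,q)^m$, and the EGF of weighted labelled trees whose underlying shape is $\tau$ is
$$A_\tau(x,q) := \sum_{T:\, s(T)=\tau} q^{d(T)}\frac{x^{|T|}}{|T|!} = [x^{|\tau|}]A(x,q)\cdot P_\tau \cdot x^{|\tau|},$$
where I used $\sum_{T\in\kT_{|\tau|}:\, s(T)=\tau} q^{d(T)} = Y_{|\tau|,q}\, P_\tau = |\tau|!\,[x^{|\tau|}]A(x,q)\cdot P_\tau$. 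By symmetry over the $m$ positions,
$$\E[N_\tau] = m\cdot \frac{[x^n] A_\tau(x,q)\, A(x,q)^{m-1}}{[x^n] A(x,q)^m} = m\, P_\tau\, [x^{|\tau|}]A(x,q)\cdot \frac{[x^{n-|\tau|}]A(x,q)^{m-1}}{[x^n] A(x,q)^m}.$$

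Next, I would invoke Proposition~\ref{prop:power_coefficients} by splitting
$$\frac{[x^{n-|\tau|}]A(x,q)^{m-1}}{[x^n] A(x,q)^m} = \frac{[x^{n-|\tau|}]A(x,q)^{m-1}}{[x^{n-|\tau|}]A(x,q)^m}\cdot \frac{[x^{n-|\tau|}]A(x,q)^m}{[x^n]A(x,q)^m} \longrightarrow \frac{1}{C_q}\cdot x_0^{|\tau|},$$
where the first factor tends to $(1-q)/\log(1/q)=1/C_q$ and the second to $x_0^{|\tau|}$ (since the quoted ratio equals $q^{-q|\tau|/(1-q)} = x_0^{-|\tau|}$ when one reads the proposition in the reverse direction). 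This immediately yields $\E[N_\tau]/m(n) \to [x^{|\tau|}]A(x,q)\cdot P_\tau\cdot x_0^{|\tau|}/C_q$, the asserted limit.

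For the variance, an analogous computation with ordered pairs of distinct positions gives
$$\E[N_\tau(N_\tau-1)] = m(m-1)\cdot \big([x^{|\tau|}]A(x,q)\cdot P_\tau\big)^2\cdot \frac{[x^{n-2|\tau|}] A(x,q)^{m-2}}{[x^n] A(x,q)^m},$$
and two further applications of Proposition~\ref{prop:power_coefficients} show that $\E[N_\tau(N_\tau-1)]/m(n)^2$ converges to the square of $\lim \E[N_\tau]/m(n)$. Hence $\operatorname{Var}(N_\tau/m(n))\to 0$, and Chebyshev's inequality yields convergence in probability. The only technical hazard is that the argument requires Proposition~\ref{prop:power_coefficients} simultaneously with $m$, $m-1$, and $m-2$; this is precisely what the uniform version of that proposition (valid on ranges $\llbracket M_1(n), M_2(n)\rrbracket$ with $M_1(n)\to\infty$ and $M_2(n)=o(n)$) provides, so no real obstacle arises — and the same uniformity is what will allow the convergence to be stated uniformly over $m(n)$ in the required window, as needed for the applications to parts (i)–(iv) of Proposition~\ref{prop:shapesoftrees}.
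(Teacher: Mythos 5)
Your proposal is correct and essentially reproduces the paper's argument: the paper likewise encodes a distinguished first-tree-of-shape-$\tau$ constraint into the EGF $A(x,q)^m$, identifies $\E[N_\tau]/m(n) = P_\tau\,[x^{|\tau|}]A(x,q)\cdot\frac{[x^{n-|\tau|}]A^{m-1}}{[x^n]A^m}$, applies Proposition~\ref{prop:power_coefficients} to evaluate the limiting ratio, and then computes the two-point probability $\P(s(t_1)=\tau_1,s(t_2)=\tau_2)$ to close with what it calls a ``classical second moment argument.'' You have simply spelled out the factorial-moment/Chebyshev bookkeeping and the splitting of coefficient ratios a bit more explicitly than the paper does.
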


\begin{proof}[Proof of Lemma \ref{lem:treesofshapetau}]
Define, for all $m \geq 1$,
\begin{align*}
F_m(x,q) := \sum_{n \geq 1, d \geq 0} \frac{f_m(n,d)}{n!} x^n q^d,
\end{align*}
where $f_m(n,d)$ is the number of forests of $m$ trees with $n$ vertices and $d$ descents. It is clear that $F_m(x,q) = (A(x,q))^m$. 
Fix $\tau$ and $m \geq 1$, and let 
\begin{align*}
\tilde{F}_m(x,q) := \sum_{n \geq 1, d \geq 0} \frac{\tilde{f}_m(n,d)}{n!} x^n q^d,
\end{align*}
where $\tilde{f}_m(n,d)$ is the number of forests of $m$ trees with $n$ vertices and $d$ descents, such that the first tree has shape $\tau$. Then, 
\begin{align*}
[x^n] \tilde F_m(x,q) = [x^{|\tau|}] A(x,q) P_\tau [x^{n-|\tau|}] A(x,q)^{m-1}.
\end{align*}
Hence, we get
\begin{align*}
\P\left( s(t_1)=\tau \right) = \frac{[x^n] \tilde{F}_m(x,q)}{[x^n] F_m(x,q)} = \frac{[x^{|\tau|}] A(x,q) [x^{n-|\tau|}] A^{m-1}(x,q)}{[x^n]A^m(x,q)} P_\tau.
\end{align*}
Observe now that, uniformly as $n,m \to \infty$ and $n/m \to \infty$, it follows from Proposition \ref{prop:power_coefficients} that
\begin{align*}
\frac{[x^{n-|\tau|}]A^{m-1}(x,q)}{[x^n]A^m(x,q)} \sim \frac{1}{C_q} x_0^{|\tau|},
\end{align*}
and thus that
\begin{equation}
\label{eq:cvofsizet1}
\P\left(s(t_1)=\tau\right) \underset{n \to \infty}{\to} [x^{|\tau|}] A(x,q) \frac{x_0^{|\tau|}}{C_q} P_\tau.
\end{equation}
Now observe that, for all fixed shapes $\tau_1, \tau_2$, as $n,m \to \infty$ and $n/m \to \infty$,
\begin{align}
\label{eq:cvofsizescouple}
\P\left( s(t_1)=\tau_1, s(t_2)=\tau_2 \right) &= \frac{[x^{|\tau_1|}] A(x,q) P_{\tau_1} [x^{|\tau_2|}] A(x,q) P_{\tau_2} [x^{n-|\tau_1|-|\tau_2|}] A^{m-2}(x,q)}{[x^n]A^m(x,q)}\\
&\sim P_{\tau_1} P_{\tau_2} \frac{[x^{|\tau_1|}] A(x,q) [x^{|\tau_2|}] A(x,q) x_0^{|\tau_1|+|\tau_2|}}{C_q^2}.
\end{align}
Lemma \ref{lem:treesofshapetau} follows from a classical second moment argument, since \eqref{eq:cvofsizet1} and \eqref{eq:cvofsizescouple} imply that the variance of $N_\tau/m(n)$ goes to $0$ as $n \to \infty$.

\end{proof}

We can now prove Proposition \ref{prop:shapesoftrees}.

\begin{proof}[Proof of Proposition \ref{prop:shapesoftrees} (i)]
Fix an unlabelled rooted tree $\tau$ of size $|\tau|$. Then, by Lemma \ref{lem:treesofshapetau},
\begin{align*}
\frac{N_\tau(0,1)}{m(n)} \overset{\P}{\underset{n \to \infty}{\to}} [x^{|\tau|}] A(x,q) \frac{x_0^{|\tau|}}{C_q} \, P_\tau.
\end{align*}
Now, for all $1 \leq i \leq |\tau|$, let
\begin{align*}
p_i(\tau) = \P \left( \ell(\rho)=i \left| s\left(\cT_{|\tau|}^{(q)}\right) \right. = \tau \right)
\end{align*}
be the probability that the root $\rho$ of $\cT_{|\tau|}^{(q)}$ is labelled $i$ knowing that $\cT_{|\tau|}^{(q)}$ has shape $\tau$. 

Observe that, conditionally on their sizes, the labels of the vertices of the trees of shape $\tau$ form independent subsets of $\llbracket 1, n \rrbracket$ of size $|\tau|$ (conditionally on being disjoint). We obtain immediately, jointly for all $a,b$:
\begin{align*}
\frac{N_\tau(a,b)}{m(n)} \overset{\P}{\underset{n \to \infty}{\to}} [x^{|\tau|}] A(x,q) \frac{x_0^{|\tau|}}{C_q} \, P_\tau \, \sum_{i=1}^{|\tau|} p_i(\tau) \int_a^b u_i(s) ds,
\end{align*}
where $u_i$ is the density of the $i$-th order statistic of $|\tau|$ i.i.d.~uniform variables on $[0,1]$. The result follows, with
\begin{align*}
G_\tau: s \mapsto [x^{|\tau|}] A(x,q) \frac{x_0^{|\tau|}}{C_q} \, P_\tau \, \sum_{i=1}^{|\tau|} p_i(\tau) u_i(s).
\end{align*}
Since all $u_i$'s are continuous, $G_\tau$ is continuous on $[0,1]$.
\end{proof}

\begin{proof}[Proof of Proposition \ref{prop:shapesoftrees} (ii), (iii) and (iv)]
Since $G$ is (strictly) positive, it is immediate that $\int_0^{L(x)} G(s) ds = x$. Let us now prove (iii). Since $x \mapsto \ell(\rho_{\lfloor x \, h_n \rfloor})$ is nondecreasing, by Dini's theorem, we only need to check that $L$ is continuous, and that the convergence holds pointwise in probability. The continuity of $L$ is clear, being the inverse of an increasing continuous function. To prove the pointwise convergence, observe that we obtain from Proposition \ref{prop:shapesoftrees} (i) that, for all $a \in [0,1]$,
\begin{align*}
\P\left( \ell(\rho_{\lfloor x h_n \rfloor}) > n a \right) = \P\left( \sum_\tau N_\tau(0,a) < \lfloor x h_n \rfloor \right) \underset{n \to \infty}{\to} \P\left( \int_0^a G(s) ds \leq x \right) = \mathds{1}_{\int_0^a G(s) ds \leq x}.
\end{align*}
This implies the result.

Finally, (iv) is just a consequence of (i) and (iii).
\end{proof}

\section*{Acknowledgments}

The first author was supported by the Austrian Science Fund (FWF) under grant P33083. The second author was supported by the Knut and Alice Wallenberg Foundation (KAW 2017.0112) and the Swedish research council (VR), grant 2022-04030.

\bibliographystyle{abbrv}
\bibliography{BibTreeInv}

\end{document}